\documentclass[10.5pt]{article}

\usepackage[margin=2.8cm]{geometry}
\usepackage{amsmath,amssymb,amsthm}
\usepackage{enumitem,color}
\usepackage{hyperref}
\usepackage{multirow,booktabs}
\usepackage{enumitem}

\usepackage{extarrows}
\usepackage{bm}
\usepackage{graphicx}
\usepackage{graphics}
\usepackage{caption2}
\usepackage{psfrag}
\usepackage{arydshln}

\usepackage{amscd}
\usepackage{amsfonts}
\usepackage{float}
\usepackage{latexsym}
\usepackage{color}
\usepackage{lscape}

\newtheorem{theorem}{Theorem}[section]
\newtheorem{lemma}[theorem]{Lemma}

\newtheorem{remark}{Remark}[section]
\newcommand{\om}{\Omega}

\newcommand{\p}{\partial}

\title{Spatial two-grid compact difference scheme for two-dimensional nonlinear diffusion-wave equations with variable exponent}

\author{Hao Zhang\thanks{School of Computer Science and Engineering, Sun Yat-sen University, Guangzhou 510006, Guangdong, P. R. China. (Email: zhangh925@mail2.sysu.edu.cn) }
\and
Kexin Li\thanks{School of Statistics and Mathematics, Yunnan University of Finance and Economics, Kunming 650221, Yunnan, P. R. China. (Email: likx1213@163.com) }
\and
Wenlin Qiu\thanks{School of Mathematics, Shandong University, Jinan 250100, Shandong, P. R. China. (Email: wlqiu@sdu.edu.cn) }
}


\date{}

\begin{document}

\maketitle

\begin{abstract}
This paper presents a spatial two-grid (STG) compact difference scheme for a two-dimensional (2D) nonlinear diffusion-wave equation with variable exponent, which describes, e.g., the propagation of mechanical diffusive waves in viscoelastic media with varying material properties. Following the idea of the convolution approach, the diffusion-wave model is first transformed into an equivalent formulation. A fully discrete scheme is then developed by applying a compact difference approximation in space and combining the averaged product integration rule with linear interpolation quadrature in time. An efficient high-order two-grid algorithm is constructed by solving a small-scale nonlinear system on the coarse grid and a large-scale linearized system on the fine grid, where the bicubic spline interpolation operator is used to project coarse-grid solutions to the fine grid. Under mild assumptions on the variable exponent $\alpha(t)$, the stability and convergence of the STG compact difference scheme are rigorously established. Numerical experiments are finally presented to verify the accuracy and efficiency of the proposed method.

\vskip 1mm
\textbf{Keywords:} 2D nonlinear diffusion wave, variable exponent, spatial two-grid algorithm, temporal second-order scheme, stability and convergence
\end{abstract}

\section{Introduction}

This study considers the following 2D nonlinear diffusion-wave model 
\begin{equation}\label{VtFDEs}
\begin{array}{c}
^{c}\p_{t}^{\alpha(t)}u(\bm x,t)-  \Delta u (\bm x,t)= f(u(\bm x,t)) ,\qquad (\bm x,t) \in \Omega\times(0,T],
\end{array}
\end{equation}
with the variable exponent $1<\alpha(t)<2$, which is subject to the initial conditions
\begin{equation}\label{ic}
u(\bm x,0)=u_0(\bm x),\quad\p_tu(\bm x,0)=\bar u_0(\bm x),\qquad \bm x\in \Omega,
\end{equation}
and Dirichlet boundary conditions
\begin{equation}\label{bc}
u(\bm x,t) = 0,\qquad (\bm x,t) \in \p \Omega\times[0,T].
\end{equation}
Here $\Omega \subset \mathbb{R}^2$ is a simply-connected bounded domain with the 
piecewise smooth boundary $\p \om$ with convex corners, $\bm x := (x,y)$ denotes the 
spatial variables. The 2D Laplacian is defined by 
$\Delta u := \partial_x^2 u + \partial_y^2 u$, and $f$, $u_0$, and $\bar{u}_0$ denote the source term and initial values, respectively. For the convenience of subsequent spatial discretization 
and analysis, we restrict the spatial domain to a rectangular area, that is $\Omega=(l_{x},r_{x})\times(l_{y},r_{y})$. The variable exponent fractional derivative of order $1<\alpha(t)<2$ is defined via \cite{LorHar}
\begin{equation}\nonumber
  ^{c}\p_{t}^{\alpha(t)}u(x,t):=(k*\p_t^2 u)(x,t)=\int_0^tk(t-s)\p_s^2u(x,s)ds,\qquad k(t):=\frac{t^{1-\alpha(t)}}{\Gamma(2-\alpha(t))}.
\end{equation}
Throughout this work, we assume that the nonlinear term $f(u)$ and $\alpha(t)$ satisfy the following conditions:
\begin{itemize}
    \item The nonlinear term $f(u)\in C^2(\mathbb{R})$ satisfies the globally boundedness condition $|f'(u)|\le Q_f$.
    \item The variable exponent $\alpha(t)$ is three times differentiable with $|\alpha'(t)|+|\alpha''(t)|+|\alpha'''(t)|\le Q_\alpha$ and $\alpha'(0)=0$.
\end{itemize}
Here, $Q_f$ and $Q_\alpha$ are positive constants. Moreover, we denote by $Q$ a generic constant, whose value may vary in different contexts.

For the constant-exponent case, i.e., $\alpha(t)\equiv \alpha$ with $1<\alpha<2$, models analogous to \eqref{VtFDEs}--\eqref{bc} have been employed to describe a variety of physical and engineering phenomena \cite{wulibeijing1}. In particular, it has been demonstrated in \cite{wulibeijing5,Lyu,wulibeijing4,wulibeijing3,wulibeijing2} that such models can simulate diverse processes, including anomalous diffusion, viscoelasticity, biological systems, financial dynamics, and quantum mechanics. Recent studies further indicate that the properties of materials or systems may not only exhibit anomalous behavior but also evolve over time in many dynamic processes \cite{SunZha,SunCha,wulibeijing6}. Motivated by these observations, variable-exponent models have attracted increasing attention in both theoretical analysis and numerical approximation, see, e.g., \cite{GarGiu,Hong,Jia,Liang,Ma,Zayernouri,ZenZhaKar,ZhuLiu}.

It is well recognized that fractional differential equations with variable exponents pose significant challenges \cite{DieFor}. Although variable-exponent fractional operators inherit the nonlocality and weak singularity of their constant-order counterparts, the lack of a convolution structure introduces additional difficulties in both mathematical analysis and numerical simulation. In recent years, a variety of accurate and efficient numerical methods have been developed for variable-exponent time- and space-fractional problems \cite{Cchen,LiWanWan,ZheWanJMAA,ZheWanSINUM,ZheWanIMA}. In contrast, rigorous analysis for diffusion-wave-type equations, such as model \eqref{VtFDEs}--\eqref{bc}, remains scarce. This scarcity arises from the fact that the variable-exponent Abel kernel $k(t)$ cannot be treated analytically by conventional techniques and may lack positive definiteness or monotonicity. Some researchers have addressed mathematical analysis and numerical approximation for variable-exponent fractional diffusion-wave models with an additional temporal leading term $\partial_t^2 u$, see, e.g., \cite{duruilian,zheWanCNSNS}. Nevertheless, to the best of the authors' knowledge, very few numerical methods exist for solving model \eqref{VtFDEs}--\eqref{bc} when $k * \partial_t^2 u$ serves as the leading term, while achieving second-order accuracy in time.

Recently, a convolution method was developed in \cite{Zhe}, which reformulates the variable-exponent subdiffusion problem into a more tractable form, facilitating rigorous analysis. Building on this approach, Qiu and Zheng \cite{Qiu25JSC} transformed the linear version of models \eqref{VtFDEs}--\eqref{bc} into integro-differential equations and investigated the well-posedness and regularity of their solutions. Motivated by these advances, we aim to construct appropriate numerical discretizations for the nonlinear diffusion-wave problem. Nonetheless, although certain ideas from \cite{Qiu25JSC,Zhe} can be adapted, the presence of the nonlinear source term introduces two major challenges in both the design of the numerical scheme and the accompanying theoretical analysis.

On the one hand, applying this approach to transform model \eqref{VtFDEs}--\eqref{bc} results in the original nonlinear term $f(u)$ becoming a nonlocal integral with a singular kernel. From the perspective of numerical analysis, a primary difficulty is that this transformed nonlinear term may hinder the construction of high-order schemes. To address this issue, we employ the averaged product integration (PI) rule proposed by McLean and Mustapha \cite{Mclean} to approximate the nonlinear term in the temporal direction.

On the other hand, compared with the linear case, the computational cost of solving nonlinear problems increases dramatically as the mesh is refined. To improve efficiency, Xu \cite{Xujinchao1,Xujinchao2} proposed the two-grid algorithm, which reduces computational effort without sacrificing accuracy. Following this idea, several researchers have applied this technique to nonlinear problems, see, e.g., \cite{twogird1,twogird2,twogird3}. However, for the STG difference method, the finite difference approach produces discrete solutions on the grid, making it challenging to preserve spatial accuracy when transferring the coarse-grid solution to the fine grid. Specifically, the lack of a rigorous analysis for an accuracy-preserving mapping operator has limited the development of high-order STG algorithms in the literature. Recently, Fu et al. \cite{fuhongfei} proposed and analyzed a high-order mapping operator between two grids, enabling the construction and analysis of high-order STG schemes for nonlinear problems. In this work, we employ this high-order mapping operator to develop an STG compact difference scheme for solving model \eqref{VtFDEs}--\eqref{bc}.

The main contributions of this work can be summarized as follows:

\textbf{({\romannumeral1})} Building on \cite{Qiu25JSC,Zhe}, we transform the nonlinear variable-exponent diffusion-wave model into a nonlinear integro-differential formulation. By applying the averaged PI rule to the diffusion and nonlinear terms and interpolating the kernel $k(t)$ as in \cite{ADIarxiv}, spatial discretization via a compact difference method combined with the STG approach yields an efficient two-grid fourth-order scheme. This framework allows rigorous analysis under mild assumptions on a linear $\alpha(t)$, without requiring $\alpha''(0)=0$ as in \cite{Qiu25JSC}, and extends the STG method from constant-exponent models of \cite{fuhongfei} to the variable-exponent setting.

\textbf{({\romannumeral2})} Leveraging discrete energy techniques, we rigorously establish the stability and convergence of the proposed STG compact difference scheme. Notably, under suitable regularity assumptions, the scheme achieves second-order accuracy in time, free from order reduction induced by variable exponents, and fourth-order accuracy in space. Numerical experiments confirm its superior efficiency and effectiveness compared with fully nonlinear counterparts.

\textbf{({\romannumeral3})}  Under the same regularity assumptions (see Remark \ref{regularity}) with $\alpha(0) \in (1,2)$, we perform numerical analysis on model \eqref{VtFDEs} after applying the convolution method, attaining the same high-order accuracy as the scheme in \cite{Lyu} on a uniform grid, without the stringent grid ratio required for a non-uniform grid.

The remainder of this paper is organized as follows. Section \ref{sec2} presents preliminaries on temporal and spatial discretizations. In Section \ref{sec3}, we develop the standard nonlinear compact difference scheme and the STG compact difference scheme. The stability and convergence of the proposed STG scheme are established in Section \ref{sec4}. Finally, Section \ref{sec5} reports numerical experiments that validate the accuracy and theoretical reliability of the scheme.

\section{Preliminaries}\label{sec2}
In this section, we introduce notations and preliminary lemmas that will be used in the subsequent sections.

\subsection{The spatial preliminaries}
Let spatial step sizes $\kappa_{x}=(r_{x}-l_{x})/{M_{\kappa,x}}$ and 
$\kappa_{y}=(r_{y}-l_{y})/{M_{\kappa,y}}$ for $M_{\kappa,x}, M_{\kappa,y}\in \mathbb{Z}^{+}$ 
with $\kappa=H$ or $h$. For a given positive integer $J\ge 2$, we assume that 
$M_{h,x}=J M_{H,x}$ and $M_{h,y}=J M_{H,y}$. Then, we can perform coarse grid partition 
of $\Omega$ by $x_{i}=l_{x}+iH_{x}$ for $0\le i\le M_{H,x}$ and $y_{j}=l_{y}+jH_{y}$ 
for $0\le j\le M_{H,y}$. Similarly, the fine grid partition of $\Omega$ is given by 
$x_{i}=l_{x}+ih_{x}$ for $0\le i\le M_{h,x}$ and $y_{j}=l_{y}+jh_{y}$ for 
$0\le j\le M_{h,y}$. Here, we introduce the spaces of grid functions:

\begin{equation*}
\mathfrak{U}_\kappa=\{u \mid u=\{u_{i,j}\}, (i,j)\in \bar{\omega}_\kappa\},\qquad 
\dot{\mathfrak{U}}_\kappa=\{u\mid u\in\mathfrak{U}_\kappa~\text{and}~u_{i,j}=0~\text{if}~(i,j)\in\partial\omega_\kappa\},
\end{equation*}
with the notations 
$\omega_{\kappa}=\{(i,j)\:|\:1 \leq i \leq M_{\kappa,x}-1, 1 \leq j \leq M_{\kappa,y}-1\}$, 
$\bar{\omega}_{\kappa}= \{(i,j)\:|\:0\leq i \leq M_{\kappa,x}, 0 \leq j \leq M_{\kappa,y}\}$
and $\partial\omega_{\kappa}=\bar{\omega}_{\kappa}\setminus\omega_{\kappa}$. 
For any grid function $u\in \mathfrak{U}_{\kappa}$, define the following discrete operators
\begin{align*}
\mathcal{A}_{\kappa,x}u_{i,j}=
\left\{\begin{array}{ccc}
\frac{u_{i-1,j}+10u_{i,j}+u_{i+1,j}}{12},& 1\leq i\leq M_{\kappa,x}-1,&\quad0\leq j\leq M_{\kappa,y},\\
u_{i,j},& \quad i=0,M_{\kappa,x},&\quad0\leq j\leq M_{\kappa,y},
\end{array}\right.\\
\mathcal{A}_{\kappa,y}u_{i,j}=
\left\{\begin{array}{ccc}
\frac{u_{i,j-1}+10u_{i,j}+u_{i,j+1}}{12},& 1\leq j\leq M_{\kappa,y}-1,&\quad0\leq i\leq M_{\kappa,x},\\
u_{i,j},& \quad j=0,M_{\kappa,y},&\quad0\leq i\leq M_{\kappa,x},
\end{array}\right.
\end{align*}
and
\begin{align*}
&\delta_{\kappa,x}^{2}u_{i,j}=\frac{u_{i+1,j}-2u_{i,j}+u_{i-1,j}}{\kappa_{x}^{2}},\qquad 
\delta_{\kappa,y}^{2}u_{i,j}=\frac{u_{i,j+1}-2u_{i,j}+u_{i,j-1}}{\kappa_{y}^{2}},\\
&\mathcal{A}_{\kappa}u_{i,j}=\mathcal{A}_{\kappa,x}\mathcal{A}_{\kappa,y}u_{i,j},\qquad\qquad\qquad \Lambda_{\kappa}u_{i,j}=\mathcal{A}_{\kappa,y}\delta_{\kappa,x}^{2}u_{i,j}+\mathcal{A}_{\kappa,x}\delta_{\kappa,y}^{2}u_{i,j}.
\end{align*}
Besides, for any grid functions $u$ and $w$ in the space $\dot{\mathfrak{U}}_{\kappa}$, invoke the discrete inner products and norms
\begin{equation*}
\begin{aligned}
&(u,w)_{\kappa}=\kappa_{x}\kappa_{y}\sum_{i=1}^{M_{\kappa,x}-1}\sum_{j=1}^{M_{\kappa,y}-1}u_{i,j}w_{i,j}, \qquad\qquad\quad \|u\|_{\kappa}=\sqrt{(u,u)_{\kappa}},\\
&(u,w)_{\mathcal{A}_{\kappa}}=(u,\mathcal{A}_{\kappa}w)_{\kappa},\quad 
\|u\|_{\mathcal{A}_{\kappa}}=\sqrt{(u,\mathcal{A}_{\kappa}u)_{\kappa}},\quad
\|u\|_{\kappa,\infty}=\max\limits_{0\leq i\leq M_{\kappa,x},0\leq j\leq M_{\kappa,y}}\{|u_{i,j}|\}.
\end{aligned}
\end{equation*}
We next present several auxiliary lemmas.

\begin{lemma}\label{space1}
\cite[Lemma 4.1]{duruilian}
Let $w\in\dot{\mathfrak{U}}_{\kappa}$, then it holds that
\begin{equation*}
\frac{1}{3}\|w\|^{2}_{\kappa}\leq\|w\|_{\mathcal A_{\kappa}}^{2}\leq\|w\|_{\kappa}^{2},\qquad   
\|\mathcal{A}_{\kappa}w\|_{\kappa}\leq\|w\|_{\kappa},\qquad \|\mathcal{A}_{\kappa}w\|_{\kappa,\infty}\leq\|w\|_{\kappa,\infty}.
\end{equation*}
\end{lemma}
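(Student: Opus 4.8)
The plan is to exploit the tensor-product structure $\mathcal{A}_\kappa=\mathcal{A}_{\kappa,x}\mathcal{A}_{\kappa,y}$ together with the fact that each one-dimensional averaging operator is a mild perturbation of the identity. First I would record the algebraic identity
\[
\mathcal{A}_{\kappa,x}=I+\frac{\kappa_x^2}{12}\delta_{\kappa,x}^2,\qquad \mathcal{A}_{\kappa,y}=I+\frac{\kappa_y^2}{12}\delta_{\kappa,y}^2,
\]
which follows at once from the stencil, since $(u_{i-1,j}+10u_{i,j}+u_{i+1,j})/12=u_{i,j}+\kappa_x^2\,\delta_{\kappa,x}^2 u_{i,j}/12$ on interior nodes. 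On $\dot{\mathfrak{U}}_\kappa$ the operators $-\delta_{\kappa,x}^2$ and $-\delta_{\kappa,y}^2$ are symmetric and positive semidefinite with respect to $(\cdot,\cdot)_\kappa$ (summation by parts, using the vanishing boundary values), and since they act in independent coordinate directions they commute. Hence $\mathcal{A}_{\kappa,x}$ and $\mathcal{A}_{\kappa,y}$ are commuting symmetric operators, and so is their product $\mathcal{A}_\kappa$.

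Next I would diagonalize them simultaneously in the discrete sine basis $\phi^{(k,l)}_{i,j}=\sin\!\big(k\pi(x_i-l_x)/(r_x-l_x)\big)\sin\!\big(l\pi(y_j-l_y)/(r_y-l_y)\big)$, the common eigenvectors of $\delta_{\kappa,x}^2$ and $\delta_{\kappa,y}^2$. A direct computation gives the eigenvalues of $\mathcal{A}_{\kappa,x}$ as $1-\tfrac13\sin^2\!\big(k\pi/(2M_{\kappa,x})\big)\in[\tfrac23,1]$, and analogously for $\mathcal{A}_{\kappa,y}$. Therefore every eigenvalue of $\mathcal{A}_\kappa$, being a product of one eigenvalue from each factor, lies in $[\tfrac49,1]\subseteq[\tfrac13,1]$. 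Because $\mathcal{A}_\kappa$ is symmetric positive definite, this spectral localization yields immediately the Rayleigh-quotient bounds $\tfrac13\|w\|_\kappa^2\le(w,\mathcal{A}_\kappa w)_\kappa=\|w\|_{\mathcal{A}_\kappa}^2\le\|w\|_\kappa^2$ (the sharp lower constant being $\tfrac49$, from which $\tfrac13$ follows a fortiori), and, since the spectral radius is at most $1$, the operator-norm estimate $\|\mathcal{A}_\kappa w\|_\kappa\le\|w\|_\kappa$.

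For the maximum-norm contraction I would argue by convexity rather than spectrally. At each interior node the stencil defining $\mathcal{A}_{\kappa,x}$ is a convex combination, with nonnegative weights $\tfrac1{12},\tfrac{10}{12},\tfrac1{12}$ summing to one, while at boundary nodes it reduces to the identity; hence $\|\mathcal{A}_{\kappa,x}w\|_{\kappa,\infty}\le\|w\|_{\kappa,\infty}$, and likewise for $\mathcal{A}_{\kappa,y}$. Composing the two directions gives $\|\mathcal{A}_\kappa w\|_{\kappa,\infty}=\|\mathcal{A}_{\kappa,x}\mathcal{A}_{\kappa,y}w\|_{\kappa,\infty}\le\|\mathcal{A}_{\kappa,y}w\|_{\kappa,\infty}\le\|w\|_{\kappa,\infty}$, which is the third inequality.

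The step I expect to be the main obstacle is the clean justification of the simultaneous diagonalization, i.e. verifying that the boundary-identity rows do not spoil the symmetry or the commutation of $\mathcal{A}_{\kappa,x}$ and $\mathcal{A}_{\kappa,y}$ on $\dot{\mathfrak{U}}_\kappa$; once the problem is restricted to the interior unknowns and expressed in the discrete sine eigenbasis this becomes routine, but it must be set up carefully. Everything else—the eigenvalue arithmetic giving the range $[\tfrac49,1]$ and the convexity argument for the $\infty$-norm—is elementary.
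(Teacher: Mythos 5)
Your proof is correct, but there is nothing in the paper to compare it against: the lemma is quoted verbatim with a citation to \cite[Lemma 4.1]{duruilian}, and no proof is reproduced in this manuscript. Your argument is a legitimate self-contained alternative. The standard route in the compact-difference literature (and in the cited reference) avoids the eigenbasis entirely: one writes $\mathcal{A}_{\kappa,x}=I+\frac{\kappa_x^2}{12}\delta_{\kappa,x}^2$, uses summation by parts $(\delta_{\kappa,x}^2 w,w)_\kappa=-\|\delta_x w\|_\kappa^2$ (valid thanks to the vanishing boundary values) together with the inverse estimate $\|\delta_x w\|_\kappa^2\le \frac{4}{\kappa_x^2}\|w\|_\kappa^2$ to get $\frac{2}{3}\|w\|_\kappa^2\le(\mathcal{A}_{\kappa,x}w,w)_\kappa\le\|w\|_\kappa^2$ in each direction, and then combines the two directions through the commuting SPD square roots, e.g.\ $(\mathcal{A}_{\kappa,x}\mathcal{A}_{\kappa,y}w,w)_\kappa=(\mathcal{A}_{\kappa,x}\mathcal{A}_{\kappa,y}^{1/2}w,\mathcal{A}_{\kappa,y}^{1/2}w)_\kappa$; the max-norm bound is the same convexity argument you give. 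What that route buys is brevity and no spectral bookkeeping; what your route buys is the explicit spectrum $1-\frac{1}{3}\sin^2\bigl(k\pi/(2M_{\kappa,x})\bigr)$ and hence the sharper lower constant $\frac{4}{9}$ (of which the stated $\frac{1}{3}$ is a weakening), plus the operator-norm bound $\|\mathcal{A}_\kappa w\|_\kappa\le\|w\|_\kappa$ for free from the spectral radius. Your flagged worry about the boundary-identity rows is handled correctly: since $(\cdot,\cdot)_\kappa$ sums only interior nodes and elements of $\dot{\mathfrak{U}}_\kappa$ vanish on $\partial\omega_\kappa$, both one-dimensional operators restrict on the interior unknowns to Kronecker factors of the form $B\otimes I$ and $I\otimes B$ with $B=\mathrm{tridiag}(\frac{1}{12},\frac{10}{12},\frac{1}{12})$, which are symmetric, commute, and are simultaneously diagonalized by the tensor sine basis exactly as you claim; one should also note (one line, as your setup implicitly requires) that $\mathcal{A}_{\kappa,x}$ and $\mathcal{A}_{\kappa,y}$ map $\dot{\mathfrak{U}}_\kappa$ into itself, which is immediate from the stencil and the zero boundary data.
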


\begin{lemma}\label{space2}
\cite{sunzhizhong1}
Suppose the function $G(x,y)\in C_{x,y}^{6,6}([l_{x},r_{x}]\times[l_{y},r_{y}])$, we have
\begin{align*}
&\frac{1}{12}\left[\partial_{x}^{2}G(x_{i-1},y_{j})+10\partial_{x}^{2}G(x_{i},y_{j})+\partial_{x}^{2}G(x_{i+1},y_{j})\right]-\frac{\kappa_{x}^{4}}{240}\partial_{x}^{6}G(\hat{\theta_{i}},y_{j})\\
=&\frac{1}{\kappa_{x}^{2}}\left[G(x_{i-1},y_{j})-2G(x_{i},y_{j})+G(x_{i+1},y_{j})\right],\quad 1\le i\le M_{\kappa,x}-1, \quad 0\le j\le M_{\kappa,y},\\
&\frac{1}{12}\left[\partial_{y}^{2}G(x_{i},y_{j-1})+10\partial_{y}^{2}G(x_{i},y_{j})+\partial_{y}^{2}G(x_{i},y_{j+1})\right]-\frac{\kappa_{y}^{4}}{240}\partial_{y}^{6}G(x_{i},\tilde{\theta}_{j})\\
=&\frac{1}{\kappa_{y}^{2}}\left[G(x_{i},y_{j-1})-2G(x_{i},y_{j})+G(x_{i},y_{j+1})\right],\quad 0\le i\le M_{\kappa,x}, \quad 1\le j\le M_{\kappa,y}-1.
\end{align*}
with $\hat{\theta}_i\in(x_{i-1},x_{i+1})$ and $\tilde{\theta}_j\in(y_{j-1},y_{j+1})$.
\end{lemma}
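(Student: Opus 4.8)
The plan is to reduce the two–dimensional statement to a one–dimensional Taylor/quadrature estimate, since the $x$- and $y$-identities are structurally identical and follow from one another by interchanging the roles of the two variables. Fix the row $y=y_j$ and set $g(x):=G(x,y_j)$, which belongs to $C^{6}([l_x,r_x])$; after translating the origin to $x_i$ it suffices to analyse the linear error functional
\[
E(g):=\frac{g(x_{i-1})-2g(x_i)+g(x_{i+1})}{\kappa_x^{2}}-\frac{1}{12}\bigl[g''(x_{i-1})+10\,g''(x_i)+g''(x_{i+1})\bigr],
\]
which is exactly $\delta_{\kappa,x}^{2}g-\mathcal{A}_{\kappa,x}g''$ at the node $x_i$, and to prove that $E(g)=-\tfrac{\kappa_x^{4}}{240}\,g^{(6)}(\hat\theta_i)$ for some $\hat\theta_i\in(x_{i-1},x_{i+1})$. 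This is precisely the claimed identity rearranged.

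First I would record that $E$ annihilates every polynomial of degree at most five. The odd monomials $x,\,x^{3},\,x^{5}$ are killed automatically by the even symmetry of the three-point stencil about $x_i$, so only the even cases $x^{0},\,x^{2},\,x^{4}$ need checking, each a one-line computation; this is just the fourth-order consistency of the compact operator. Applying $E$ to the single remaining monomial $x^{6}$ then fixes the exact constant: a direct evaluation gives $E(x^{6})=-3\kappa_x^{4}$, whence $E\!\bigl((x-x_i)^{6}/6!\bigr)=-\kappa_x^{4}/240$, which is the coefficient appearing in the statement.

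With degree-five exactness in hand, the natural tool is Peano's kernel theorem (equivalently, Taylor's theorem with the integral form of the remainder applied to each of the five nodal evaluations and then recombined). This represents the error as
\[
E(g)=\int_{x_{i-1}}^{x_{i+1}}K(s)\,g^{(6)}(s)\,ds,\qquad K(s)=E_x\!\left[\frac{(x-s)_{+}^{5}}{5!}\right],
\]
where $E_x$ denotes the action of $E$ on the variable $x$ for fixed $s$; since $E$ kills polynomials of degree five, $K$ is supported on $[x_{i-1},x_{i+1}]$ and is even about $x_i$. The decisive step is to verify that $K$ does not change sign on $(x_{i-1},x_{i+1})$: this is what upgrades a mere $O(\kappa_x^{4})$ bound into the sharp single-point Lagrange-type remainder the lemma demands. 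Granting sign-definiteness, the mean value theorem for integrals produces one intermediate node $\hat\theta_i$ and yields $E(g)=g^{(6)}(\hat\theta_i)\int_{x_{i-1}}^{x_{i+1}}K(s)\,ds=g^{(6)}(\hat\theta_i)\,E\!\bigl((x-x_i)^{6}/6!\bigr)=-\tfrac{\kappa_x^{4}}{240}\,g^{(6)}(\hat\theta_i)$.

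I expect the sign-definiteness of $K$ to be the only genuinely delicate point. A naive expansion with Lagrange remainders instead yields two intermediate points carrying opposite-sign coefficients $\tfrac{1}{144}$ and $-\tfrac{1}{360}$, whose difference is indeed $\tfrac{1}{240}$ but which cannot be fused into a single evaluation by the intermediate value theorem; the sign-definite Peano kernel is exactly what circumvents this obstruction. In practice $K$ is an explicit, even, piecewise-quintic function, so its constant sign can be confirmed by inspecting its two polynomial pieces on $[x_i,x_{i+1}]$ (consistency is a useful check: $\int K\,ds=-\kappa_x^{4}/240<0$ forces $K\le 0$). The $y$-direction identity then follows verbatim with $x$ and $y$ interchanged, completing the argument.
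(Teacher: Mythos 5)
Your proof is correct: the functional $E=\delta_{\kappa,x}^{2}-\mathcal{A}_{\kappa,x}\partial_x^2$ indeed annihilates $\mathbb{P}_5$, the Peano kernel on $(x_i,x_{i+1})$ works out (after translating $x_i$ to $0$, with $u=x_{i+1}-s$) to the single piece $K=\frac{u^{5}}{120\kappa_x^{2}}-\frac{u^{3}}{72}=u^{3}\bigl(\frac{u^{2}}{120\kappa_x^{2}}-\frac{1}{72}\bigr)<0$, which is even about $x_i$ and integrates to $-\kappa_x^{4}/240$, so the integral mean value theorem yields exactly the single-point remainder $-\frac{\kappa_x^4}{240}\partial_x^6 G(\hat\theta_i,y_j)$, and the $y$-identity follows by symmetry. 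The paper states this lemma by citation only (to Sun's textbook) and gives no proof of its own, but your argument --- degree-five exactness plus a sign-definite integral-remainder kernel fused by the mean value theorem, rather than two opposite-sign Lagrange remainders --- is essentially the standard proof in that reference, so there is nothing to flag beyond the immaterial slip that $K$ consists of one polynomial piece on $[x_i,x_{i+1}]$, not two.
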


\begin{lemma}\label{positivityofLambda}
    \cite[Lemma 5.1]{wangyuanming}
    For any $u,w\in\dot{\mathfrak{U}}_\kappa$, there exists a spatial operator $\nabla_\kappa^*$ such that 
    \begin{equation*}
        -\left(\Lambda_\kappa u,w\right)_\kappa=\left(\nabla_\kappa^*u,\nabla_\kappa^*w\right)_\kappa.
    \end{equation*}
\end{lemma}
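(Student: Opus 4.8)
The plan is to recognize $-(\Lambda_\kappa u,w)_\kappa$ as a genuine discrete Dirichlet form by combining a discrete summation-by-parts identity with the fact that the one-dimensional averaging operators are symmetric and positive definite. First I would record the interior identities $\mathcal{A}_{\kappa,x}=I+\frac{\kappa_x^2}{12}\delta_{\kappa,x}^2$ and $\mathcal{A}_{\kappa,y}=I+\frac{\kappa_y^2}{12}\delta_{\kappa,y}^2$, which show that $\mathcal{A}_{\kappa,x},\mathcal{A}_{\kappa,y}$ are self-adjoint with respect to $(\cdot,\cdot)_\kappa$. They are moreover positive definite: under the homogeneous boundary conditions $-\delta_{\kappa,x}^2$ is positive semidefinite with spectrum strictly below $4/\kappa_x^2$, so the eigenvalues of $\mathcal{A}_{\kappa,x}$ lie in an interval bounded away from zero (and likewise for $\mathcal{A}_{\kappa,y}$). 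Consequently each admits a symmetric positive-definite square root $\mathcal{A}_{\kappa,x}^{1/2},\mathcal{A}_{\kappa,y}^{1/2}$, and operators acting in orthogonal coordinate directions commute.

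Next I would apply summation by parts in each direction. Writing $\delta_{\kappa,x}u_{i,j}=(u_{i,j}-u_{i-1,j})/\kappa_x$ for the first-order difference, the vanishing boundary values of functions in $\dot{\mathfrak{U}}_\kappa$ kill all boundary contributions, giving $-(\delta_{\kappa,x}^2u,v)_\kappa=(\delta_{\kappa,x}u,\delta_{\kappa,x}v)_\kappa$ on the staggered grid, and similarly in $y$. Since $\mathcal{A}_{\kappa,y}$ acts only in $y$, it is self-adjoint and commutes with $\delta_{\kappa,x}$, so
\begin{equation*}
-(\mathcal{A}_{\kappa,y}\delta_{\kappa,x}^2u,w)_\kappa=-(\delta_{\kappa,x}^2u,\mathcal{A}_{\kappa,y}w)_\kappa=(\delta_{\kappa,x}u,\mathcal{A}_{\kappa,y}\delta_{\kappa,x}w)_\kappa=(\mathcal{A}_{\kappa,y}^{1/2}\delta_{\kappa,x}u,\mathcal{A}_{\kappa,y}^{1/2}\delta_{\kappa,x}w)_\kappa,
\end{equation*}
and the analogous identity holds for the $y$-term with $\mathcal{A}_{\kappa,x}^{1/2}\delta_{\kappa,y}$ in place of $\mathcal{A}_{\kappa,y}^{1/2}\delta_{\kappa,x}$.

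Adding the two contributions and defining the vector-valued operator $\nabla_\kappa^*u:=\bigl(\mathcal{A}_{\kappa,y}^{1/2}\delta_{\kappa,x}u,\ \mathcal{A}_{\kappa,x}^{1/2}\delta_{\kappa,y}u\bigr)$, with $(\nabla_\kappa^*u,\nabla_\kappa^*w)_\kappa$ understood as the sum of the two component inner products, yields precisely $-(\Lambda_\kappa u,w)_\kappa=(\nabla_\kappa^*u,\nabla_\kappa^*w)_\kappa$, which is the assertion. I expect the main obstacle to be the boundary bookkeeping in the summation-by-parts step: one must check carefully that no boundary terms survive on either the original or the staggered grids, and this is exactly where the hypothesis $u,w\in\dot{\mathfrak{U}}_\kappa$ is indispensable. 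A secondary point is justifying the existence and symmetry of $\mathcal{A}_{\kappa,x}^{1/2},\mathcal{A}_{\kappa,y}^{1/2}$; this is immediate once positive definiteness is in hand, but it is the step that makes $\nabla_\kappa^*$ a bona fide operator rather than a merely formal symbol.
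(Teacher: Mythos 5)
The paper does not actually prove this lemma: it is quoted wholesale from Wang and Zhang \cite[Lemma 5.1]{wangyuanming}, so there is no internal proof to compare against, and your proposal must be judged as a self-contained argument --- which it is, and a correct one. Your chain of identities is sound: $\mathcal{A}_{\kappa,x}=I+\frac{\kappa_x^2}{12}\delta_{\kappa,x}^2$ on interior nodes, the Dirichlet spectrum of $-\delta_{\kappa,x}^2$ lying in $(0,4/\kappa_x^2)$ puts the eigenvalues of $\mathcal{A}_{\kappa,x}$ in $(2/3,1)$, summation by parts is legitimate because $\mathcal{A}_{\kappa,y}w$ inherits zero values at $i=0,M_{\kappa,x}$ while $\delta_{\kappa,x}u$ vanishes at $j=0,M_{\kappa,y}$ (the latter is what makes $\mathcal{A}_{\kappa,y}$ self-adjoint on the $x$-staggered functions), and the square-root factorization then yields the vector-valued $\nabla_\kappa^*u=\bigl(\mathcal{A}_{\kappa,y}^{1/2}\delta_{\kappa,x}u,\,\mathcal{A}_{\kappa,x}^{1/2}\delta_{\kappa,y}u\bigr)$. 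Two remarks. First, the component inner products live on staggered index sets ($1\le i\le M_{\kappa,x}$ in the first component), so writing them as $(\cdot,\cdot)_\kappa$ is a mild abuse of the paper's notation; since the lemma only asserts the existence of \emph{some} operator with the stated identity, this is harmless but worth a sentence. Second, there is a shorter abstract route that avoids staggered grids entirely: $\mathcal{A}_{\kappa,y}$ and $-\delta_{\kappa,x}^2$ are commuting self-adjoint positive definite operators on $\dot{\mathfrak{U}}_\kappa$, so their product is self-adjoint positive definite, hence $-\Lambda_\kappa=\mathcal{A}_{\kappa,y}(-\delta_{\kappa,x}^2)+\mathcal{A}_{\kappa,x}(-\delta_{\kappa,y}^2)$ is symmetric positive definite and one may simply set $\nabla_\kappa^*:=(-\Lambda_\kappa)^{1/2}$, giving $-(\Lambda_\kappa u,w)_\kappa=(\nabla_\kappa^*u,\nabla_\kappa^*w)_\kappa$ in two lines. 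Your construction costs more bookkeeping but buys an explicit discrete-gradient structure (useful if one later wants discrete $H^1$-type bounds rather than the bare identity); the abstract version is the minimal proof of the lemma as stated.
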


\subsection{The temporal preliminaries}
For given $N\in\mathbb{Z}^+$, we perform a uniform partition of the time interval $[0,T]$
by $t_{n}=n\tau$ for $0\le n\le N$ with $\tau=T/N$. Then, denote
\begin{equation*}
t_{n-\frac{1}{2}}=\frac{t_{n}+t_{n-1}}{2},\quad 1\le n\le N,
\end{equation*}
and $v^{n}=v(t_{n})$ for $0\le n\le N$. Let $\mathcal{G}=\{v \mid v=\{v^n\},~1\le n\le N\}$
be a temporal grid function space. For $v\in \mathcal{G}$, define the notations as
\begin{equation*}
v^{n-\frac{1}{2}}=\frac{1}{2}\left(v^{n}+v^{n-1}\right),\quad \delta_{t}v^{n}=\frac{v^{n}-v^{n-1}}{\tau},\quad 1\le n\le N.
\end{equation*}

In the next section, applying the convolution method to problem \eqref{VtFDEs}--\eqref{bc} leads to the following nonlocal integral in the transformed model
\begin{equation}\label{e2.1}
  \frac{1}{\tau}\int_{t_{n-1}}^{t_{n}}\int_{0}^{t}\beta_{\bar{\alpha}}(t-s)\phi(s)dsdt,\quad \text{for}~\beta_{\bar{\alpha}}(t)=\frac{t^{\bar{\alpha}-1}}{\Gamma(\bar{\alpha})},\quad 1\le n\le N,
\end{equation}
with $\bar{\alpha}\in(0,1)$ being a constant exponent. In order to numerically approximate this type of integral, we introduce the averaged PI rule \cite{Mclean}
\begin{equation*}
\mathfrak{I}^{n}_{\bar{\alpha}}\phi^{n}=\frac{1}{\tau}\int_{t_{n-1}}^{t_{n}}\int_{0}^{t}\beta_{\bar{\alpha}}(t-s)\breve{\phi}(s)dsdt=\lambda_{n,1}\phi^{1}+\sum_{j=2}^{n}\lambda_{n,j}\phi^{j-\frac{1}{2}},
\end{equation*}
in which
\begin{equation}\label{consappro}
\breve{\phi}(s)=
\begin{cases}
  \phi^1,&0<s<\tau,\\
  \phi^{n-\frac{1}{2}},&t_{n-1}<s<t_n,\quad n\ge2,
\end{cases}
\end{equation}
with the weights
\begin{equation*}
  \lambda_{n,j}=\frac{1}{\tau}\int_{t_{n-1}}^{t_n}\int_{t_{j-1}}^{\min\{t,t_j\}}\beta_{\bar{\alpha}}(t-s)dsdt>0,\quad1\leq j\leq n.
\end{equation*}
We then have 
\begin{align}
&\lambda_{n,1}=\frac{1}{\tau}\int_{t_{n-1}}^{t_n}\int_{0}^{\tau}\beta_{\bar{\alpha}}(t-s)dsdt\le \frac{1}{\tau}\int_{t_{n-1}}^{t_n}\frac{t^{\bar{\alpha}}}{\Gamma(\bar{\alpha}+1)}dt\le Q_{\bar{\alpha},T},\qquad n\ge 2,\label{lam1}\\
&\lambda_{n,n}=\frac{1}{\tau}\int_{t_{n-1}}^{t_n}\int_{t_{n-1}}^{t}\beta_{\bar{\alpha}}(t-s)dsdt= \frac{\tau^{\bar{\alpha}}}{\Gamma(\bar{\alpha}+2)}\le Q_{\bar{\alpha},T},\qquad n\ge 1,\label{lam2} 
\end{align}
and 
\begin{equation}\label{lam3}
\begin{aligned}
\sum_{j=1}^{n}\lambda_{n,j}&=\frac{1}{\tau}\sum_{j=1}^{n}\int_{t_{n-1}}^{t_n}\int_{t_{j-1}}^{\min\{t,t_j\}}\beta_{\bar{\alpha}}(t-s)dsdt\\
&=\frac{1}{\tau}\int_{t_{n-1}}^{t_n}\frac{t^{\bar{\alpha}}}{\Gamma(\bar{\alpha}+1)}dt\le Q_{\bar{\alpha},T},\qquad n\ge 1.
\end{aligned}
\end{equation}
Here, the constant $Q_{\bar{\alpha},T}$ denotes a positive constant depending only on $\bar{\alpha}$ and $T$. 
In what follows, similar notations will be used without further explanation.

Furthermore, since the local truncation error between \eqref{e2.1} and 
$\mathfrak{I}^{n}_{\bar{\alpha}}\phi^{n}$ depends on the regularity of the time-dependent function 
$\phi$, this issue will be discussed in detail in a later section. Next, we present an important lemma related to this property.

\begin{lemma}\label{PIpositive}\cite{wangyuanming}
For any grid function $v\in \mathcal{G}$ and $\alpha\in (0,1)$, it holds that
\begin{equation*}
v^{1}\mathfrak{I}_{\alpha}^{1}v^{1}+\sum_{n=2}^{N}v^{n-\frac{1}{2}}\mathfrak{I}_{\alpha}^{n}v^{n}\ge 0.
\end{equation*}
\end{lemma}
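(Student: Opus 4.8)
The plan is to recognize the double sum as a single continuous quadratic form and then reduce the whole statement to the positive definiteness of the Abel kernel $\beta_\alpha(t)=t^{\alpha-1}/\Gamma(\alpha)$. First I would introduce the piecewise-constant function $\breve v$ associated with $v$ exactly as in \eqref{consappro}, i.e. $\breve v(s)=v^1$ on $(0,\tau)$ and $\breve v(s)=v^{m-\frac12}$ on $(t_{m-1},t_m)$ for $m\ge2$. With this notation $\mathfrak{I}_\alpha^n v^n=\frac1\tau\int_{t_{n-1}}^{t_n}\int_0^t\beta_\alpha(t-s)\breve v(s)\,ds\,dt$, and the multiplier in front of $\mathfrak{I}_\alpha^n v^n$ (namely $v^1$ for $n=1$ and $v^{n-\frac12}$ for $n\ge2$) is precisely the constant value of $\breve v$ on $(t_{n-1},t_n)$. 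Carrying this multiplier inside the $t$-integral and summing over $n$ collapses the expression to
\[
v^{1}\mathfrak{I}_{\alpha}^{1}v^{1}+\sum_{n=2}^{N}v^{n-\frac{1}{2}}\mathfrak{I}_{\alpha}^{n}v^{n}=\frac{1}{\tau}\int_0^T \breve v(t)\int_0^t\beta_\alpha(t-s)\breve v(s)\,ds\,dt,
\]
so it remains only to show that the right-hand side is nonnegative for the admissible (piecewise-constant, hence $L^2$) function $g:=\breve v$.

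To prove nonnegativity of $\int_0^T g(t)\int_0^t\beta_\alpha(t-s)g(s)\,ds\,dt$, I would exploit the complete monotonicity of $\beta_\alpha$ through the subordination identity
\[
\beta_\alpha(t)=\frac{t^{\alpha-1}}{\Gamma(\alpha)}=\frac{\sin(\alpha\pi)}{\pi}\int_0^\infty\lambda^{-\alpha}e^{-\lambda t}\,d\lambda,\qquad t>0,\ \alpha\in(0,1),
\]
which follows from $\int_0^\infty\lambda^{-\alpha}e^{-\lambda t}\,d\lambda=\Gamma(1-\alpha)t^{\alpha-1}$ together with the reflection formula $\Gamma(\alpha)\Gamma(1-\alpha)=\pi/\sin(\alpha\pi)$. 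Since $\sin(\alpha\pi)>0$ on $(0,1)$, the weight $\lambda^{-\alpha}\sin(\alpha\pi)/\pi$ is nonnegative, and by Fubini the kernel form becomes a superposition over $\lambda\ge0$ of the simpler exponential-kernel forms $I(\lambda):=\int_0^T g(t)\int_0^t e^{-\lambda(t-s)}g(s)\,ds\,dt$. Thus the positivity of the Abel form is reduced to $I(\lambda)\ge0$ for every $\lambda\ge0$.

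Finally, for fixed $\lambda\ge0$ I would verify $I(\lambda)\ge0$ by an elementary primitive argument: setting $G_\lambda(t)=\int_0^t e^{\lambda s}g(s)\,ds$ gives $G_\lambda(0)=0$, $\int_0^t e^{-\lambda(t-s)}g(s)\,ds=e^{-\lambda t}G_\lambda(t)$, and $g(t)=e^{-\lambda t}G_\lambda'(t)$, so the integrand equals $\tfrac12 e^{-2\lambda t}(G_\lambda^2)'(t)$; integration by parts yields
\[
I(\lambda)=\tfrac12 e^{-2\lambda T}G_\lambda(T)^2+\lambda\int_0^T e^{-2\lambda t}G_\lambda(t)^2\,dt\ge0.
\]
Multiplying by the nonnegative weight and integrating in $\lambda$ then gives the desired inequality. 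The routine part is the bookkeeping that identifies the discrete sum with the continuous form (including the boundary case $n=1$); the genuine heart of the argument — what I expect to be the main obstacle — is establishing the positive definiteness of $\beta_\alpha$, i.e. justifying the subordination representation and the exponential reduction, which is exactly where the restriction $\alpha\in(0,1)$ enters through $\sin(\alpha\pi)>0$.
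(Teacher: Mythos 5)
Your proof is correct: the collapse of the discrete form to $\frac{1}{\tau}\int_0^T \breve v(t)\int_0^t\beta_\alpha(t-s)\breve v(s)\,ds\,dt$ is exact (the multipliers $v^1$, $v^{n-\frac12}$ are precisely the values of $\breve v$ on the corresponding subintervals), and the Laplace--subordination argument with the reflection formula and the exponential-kernel identity $I(\lambda)=\tfrac12 e^{-2\lambda T}G_\lambda(T)^2+\lambda\int_0^T e^{-2\lambda t}G_\lambda(t)^2\,dt\ge 0$ is sound. The paper itself offers no proof, citing \cite{wangyuanming}, and your argument is essentially the standard one underlying that citation (going back to McLean--Mustapha \cite{Mclean}): identify the quadratic form of the averaged PI weights with the continuous convolution form of the piecewise-constant interpolant, then invoke positive definiteness of the completely monotone Abel kernel.
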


\section{Establishment of two compact difference schemes}\label{sec3}

In this section, we develop the standard nonlinear compact difference scheme for problem \eqref{VtFDEs}--\eqref{bc} and then construct the corresponding STG compact difference scheme.

\subsection{Model reformulation}
Before that, we first transform the variable-exponent diffusion-wave model into a more tractable formulation. At this stage, we introduce the generalized identity function
\begin{equation}\label{mh7}
  g(t)=(\beta_{\alpha_0-1}*k)(t)=\int_0^t\frac{(t-s)^{\alpha_0-2}}{\Gamma(\alpha_0-1)}\frac{s^{1-\alpha(s)}}{\Gamma(2-\alpha(s))}ds,
\end{equation}
where $\alpha_0=\alpha(0)$. It is shown in \cite{Zhe} that 
$g(0)=1$ and
\begin{align}\label{g}
|g|\leq Q_g,~~|g'|\leq Q_g(|\ln t|+1),~~|g^{(m)}|\leq Q_g t^{-(m-1)},~~m=2,3\text{ for }t\in (0,T].
\end{align}
We take the convolution of (\ref{VtFDEs}) and $\beta_{\alpha_0-1}$ to arrive at
\begin{equation*}
\beta_{\alpha_0-1}*[(k*\p_t^2 u)-\Delta u - f(u)]=0,
\end{equation*}
which follows \eqref{mh7} to get
\begin{equation*}
g*\p_t^2u -\beta_{\alpha_0-1}*  \Delta u=\beta_{\alpha_0-1}*f(u).
\end{equation*}
Denoting $\bar{\alpha} = \alpha_0 -1$, in combination with integration by parts, 
we obtain the transformed model
\begin{equation}\label{Model2}
\begin{aligned}
&\partial_t u(\bm x,t)+ \int_0^t g'(t-s) \p_s u(\bm x,s)ds - \int_0^t \beta_{\bar{\alpha}}(t-s) \Delta u(\bm x,s)ds\\
=&\int_0^t \beta_{\bar{\alpha}}(t-s) f(u(\bm x,s))ds + g(t)\bar{u}_0(\bm x),\quad (\bm x,t) \in \Omega\times(0,T],
\end{aligned}
\end{equation}
equipped with the initial and boundary conditions \eqref{ic}-\eqref{bc}. From 
\cite{Qiu25JSC}, we know that the transformed model and 
\eqref{VtFDEs}-\eqref{bc} are indeed equivalent, which indicates that attention to 
\eqref{VtFDEs} can be diverted to \eqref{Model2}. Thus, we below consider the 
numerical approximations of \eqref{Model2}. Under this circumstance, some improved properties on $g(t)$ compared with those in \eqref{g} are needed.

\begin{lemma}\label{gpro}\cite{ADIarxiv}
Let $g(t)$ be denoted by \eqref{mh7}. If $\alpha'(0)=0$, then $|g'(t)|\leq Q_g$ for $t\in [0,T]$ and $|g''(t)|\leq Q_g(|\ln t|+1)$ for $t\in (0,T]$.
\end{lemma}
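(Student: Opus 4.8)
The plan is to exploit the convolution (Abel semigroup) structure of $g$ and to isolate the ``constant-exponent'' part of the kernel $k$, so that all singular behaviour is pushed into a remainder whose low-order derivatives vanish at $t=0$ precisely because $\alpha'(0)=0$. First I would record the semigroup identity $\beta_{\mu}*\beta_{\nu}=\beta_{\mu+\nu}$ (a one-line Beta-function computation), which gives $\beta_{\alpha_0-1}*\beta_{2-\alpha_0}=\beta_1\equiv 1$. Writing $k(t)=\beta_{2-\alpha_0}(t)+\rho(t)$ with $\rho(t)=k(t)-\beta_{2-\alpha_0}(t)$ then yields the clean decomposition $g(t)=1+(\beta_{\alpha_0-1}*\rho)(t)$; in particular this recovers $g(0)=1$, and it remains only to bound the derivatives of $G:=\beta_{\alpha_0-1}*\rho$.

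The heart of the argument is a sharp pointwise control of $\rho$ and its first two derivatives. I would factor $k(t)=\beta_{2-\alpha_0}(t)\,\psi(t)$ with $\psi(t)=\frac{\Gamma(2-\alpha_0)}{\Gamma(2-\alpha(t))}\,t^{\alpha_0-\alpha(t)}$, so that $\rho=\beta_{2-\alpha_0}(\psi-1)$, and write $\psi(t)=A(t)\,e^{h(t)}$ with $A(t)=\Gamma(2-\alpha_0)/\Gamma(2-\alpha(t))$ and $h(t)=(\alpha_0-\alpha(t))\ln t$. The only source of singular terms is $h$. Using $\alpha'(0)=0$ together with the assumed bounds on $\alpha',\alpha'',\alpha'''$, a Taylor expansion gives $A'(0)=0$ (hence $A(t)-1=O(t^2)$) and $h(t)=O(t^2|\ln t|)$, $h'(t)=O(t(|\ln t|+1))$, $h''(t)=O(|\ln t|+1)$ as $t\to 0^+$. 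This is exactly where the hypothesis enters, since without $\alpha'(0)=0$ the term $-\alpha'(t)\ln t$ in $h'$ would already be of size $|\ln t|$. Multiplying back by the explicitly differentiable weight $\beta_{2-\alpha_0}(t)=t^{1-\alpha_0}/\Gamma(2-\alpha_0)$ then yields $|\rho'(t)|\le Q\,t^{2-\alpha_0}(|\ln t|+1)$ and $|\rho''(t)|\le Q\,t^{1-\alpha_0}(|\ln t|+1)$, and in particular $\rho(0)=\rho'(0)=0$ (here $3-\alpha_0,\,2-\alpha_0>0$).

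Finally I would transfer the derivatives onto $\rho$. Since $\rho(0)=0$, differentiating $G(t)=\int_0^t\beta_{\alpha_0-1}(u)\rho(t-u)\,du$ gives $g'(t)=G'(t)=(\beta_{\alpha_0-1}*\rho')(t)$; since also $\rho'(0)=0$ and $\rho''$ is integrable (as $1-\alpha_0>-1$), a second differentiation gives $g''(t)=(\beta_{\alpha_0-1}*\rho'')(t)$. Inserting the pointwise bounds and using the scaling $s=t\sigma$ reduces everything to finite Beta-type integrals $\int_0^1(1-\sigma)^{\alpha_0-2}\sigma^{c}(|\ln\sigma|+1)\,d\sigma$ times a power of $t$: for $g'$ the power is $t^{1}$, so $|g'(t)|\le Q\,t(|\ln t|+1)$, which tends to $0$ at the origin and is therefore bounded on $[0,T]$, giving $|g'(t)|\le Q_g$; for $g''$ the power is $t^{0}$, leaving the factor $|\ln t|+1$ and hence $|g''(t)|\le Q_g(|\ln t|+1)$, a strict improvement over the bounds in \eqref{g}.

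I expect the main obstacle to be the careful derivative estimates of the factor $t^{\alpha_0-\alpha(t)}=e^{h(t)}$: the logarithm in $h$ causes every differentiation to produce potentially singular $\ln t$ terms, and the whole improvement rests on verifying that the leading such terms cancel when $\alpha'(0)=0$. The convolution estimates and the two integration-by-parts steps are routine once the bounds on $\rho'$ and $\rho''$ are established, but they must be carried out in the right order so that the boundary terms genuinely vanish.
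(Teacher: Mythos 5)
Your proof is correct. The paper does not actually prove this lemma---it is quoted from \cite{ADIarxiv}---but your argument (splitting $k=\beta_{2-\alpha_0}+\rho$ via the semigroup identity $\beta_{\alpha_0-1}*\beta_{2-\alpha_0}=\beta_1\equiv 1$, using $\alpha'(0)=0$ to get $\alpha_0-\alpha(t)=O(t^2)$ and hence $\rho(0)=\rho'(0)=0$ with $|\rho'|\le Qt^{2-\alpha_0}(|\ln t|+1)$, $|\rho''|\le Qt^{1-\alpha_0}(|\ln t|+1)$, then transferring derivatives onto $\rho$ in the convolution and evaluating Beta-type integrals) is precisely the decomposition strategy on which the cited proof and its predecessor \cite{Zhe} rest, so it is essentially the same approach.
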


The improved properties of the generalized identity function $g$ facilitate the subsequent numerical analysis. Moreover, the following remark on the regularity of $u$ plays a key role in the proofs of the main results.

\begin{remark}\label{regularity}
Based on the investigations on the regularity of linear and nonlinear diffusion-wave
model with variable exponent \cite{duruilian,Qiu25JSC}, it is reasonable 
to impose the following regularity assumptions:\\
\textbf{(A1)} $\p_tu$, $\p_t\p_x^2u$, $\p_t\p_y^2u$, $\p_x^6$ and $\p_y^6u$ are 
continuous in $\bar{\Omega}\times[0,T]$ with $\bar{\Omega}=\Omega\cup\partial\Omega$, 
and there exist a positive constant $Q$ such that
\begin{equation*}
\left|\partial_tu(\cdot,t)\right|\leq Qt^{\alpha_0-1},\quad \left|\partial_{\chi}^6 u(\cdot,t)\right|\leq Q,\quad\left|\partial_t\partial_\chi^2u(\cdot,t)\right|\leq Ct^{\alpha_0-1},\quad (\chi=x,y),
\end{equation*}
for all $(\bm{x},t)\in\bar{\Omega}\times (0,T]$.\\
\textbf{(A2)} $\p_t^2u$, $\p_t^3u$, $\p_t^2\p_x^2u$ and $\p_t^2\p_y^2u$ are continuous 
in $\bar{\Omega}\times(0,T]$, and there exist a positive constant $Q$ such that
\begin{equation*}
\left|\partial_t^2u(\cdot,t)\right|\leq Qt^{\alpha_0-2},\quad\left|\partial_t^3u(\cdot,t)\right|\leq Qt^{\alpha_0-3},\quad\left|\partial_t^2\partial_\chi^2u(\cdot,t)\right|\leq Qt^{\alpha_0-2},\quad (\chi=x,y),
\end{equation*}
for all $(\bm{x},t)\in\bar{\Omega}\times(0,T]$.
\end{remark}

\subsection{Construction of standard compact difference scheme}
For brevity, we denote by $u$ the solution of problem \eqref{VtFDEs}--\eqref{bc}, and define 
\begin{equation}\nonumber
u_{i,j}^{n}=u(x_{i},y_{j},t_{n}),\quad b_{i,j}^{n}=\frac{1}{\tau}\int_{t_{n-1}}^{t_{n}}g(t)\bar{u}_{0}(x_{i},y_{j})dt,\qquad (i,j)\in\bar{\omega}_{\kappa},~~1\le n\le N. 
\end{equation}
Considering \eqref{Model2} at at the point $(x_{i},y_{j})$, we integrate it for 
$t$ from $t_{n-1}$ to $t_{n}$ and multiply $1/\tau$ to obtain
\begin{equation}\label{e3.4}
\begin{aligned}
&\delta_{t}u_{i,j}^{n}+\frac{1}{\tau}\int_{t_{n-1}}^{t_{n}}\widehat{G}(x_{i},y_{j},t)dt-\frac{1}{\tau}\int_{t_{n-1}}^{t_{n}}\int_{0}^{t}\beta_{\tilde{\alpha}}(t-s)\Delta u(x_{i},y_{j},s)dsdt\\
=&\frac{1}{\tau}\int_{t_{n-1}}^{t_{n}}\int_0^t \beta_{\bar{\alpha}}(t-s) f(u(x_{i},y_{j},s))dsdt+b_{i,j}^{n},\quad (i,j)\in\omega_{\kappa},~~1\le n\le N,
\end{aligned}
\end{equation}
where 
\begin{equation*}
  \widehat{G}(x,y,t)=\int_0^tg'(t-s)\partial_s u(x,y,s)ds,\quad t\geq0.
\end{equation*}
Based on this notation, it is easy to get 
\begin{equation*}
\begin{aligned}
&\partial_{t}\widehat{G}(x,y,t)=g^{\prime}(0)\partial_{t}u(x,y,t)+\int_{0}^{t}g^{\prime\prime}(t-s)\partial_{s}u(x,y,s)ds,\\
&\partial_{t}^{2}\widehat{G}(x,y,t)=g^{\prime}(0)\partial_{t}^{2}u(x,y,t)+g^{\prime\prime}(t)\bar{u}_{0}(x,y)+\int_{0}^{t}g^{\prime\prime}(s)\partial_{s}^{2}u(x,y,t-s)ds,
\end{aligned}
\end{equation*}
which, together with Lemmas \ref{gpro} and the regularity conditions in Remark \ref{regularity}, gives 
\begin{equation}\label{Gpro}
    |\partial_{t}^{2}\widehat{G}(x,y,t)|\le Q\left(t^{\alpha_0-2}+t^{-\varepsilon}+t^{\alpha_0-1-\varepsilon}\right)\le Qt^{\alpha_0-2},\qquad 0< \varepsilon\ll 1,\qquad t\to0^{+}.
\end{equation}

Next, we apply various discretization techniques to approximate the terms in \eqref{e3.4}. In particular, piecewise linear interpolation is used to obtain
\begin{align}\nonumber
  \widehat{G}(x_{i},y_{j},t_n)= \sum\limits_{k=1}^{n} w_{n-k} \delta_{t}u_{i,j}^{k} + (R_1)_{i,j}^n, \quad (i,j)\in\omega_{\kappa},~~1\le n\le N,
\end{align}
with $w_k=g(t_{k+1})-g(t_{k})$, and from \cite{Qiu25JSC}, it follows that
\begin{equation}\label{R1}
\begin{split}
(R_1)_{i,j}^n = \sum\limits_{k=1}^{n}    \int_{t_{k-1}}^{t_k} \Biggl[ & \frac{t_{k-1}-s}{\tau} \int_{s}^{t_k} g''(t_n-\theta)(t_k-\theta)d\theta \\
& + \frac{s-t_k}{\tau} \int_{s}^{t_{k-1}} g''(t_n-\theta)(t_{k-1}-\theta)d\theta   \Biggl] \p_{s}^2 u(x_{i},y_j,s)ds.
\end{split}
\end{equation}
We then combine the middle rectangle formula to discretize the second left-hand 
side term as
\begin{equation}\label{e3.6}
\begin{aligned}
\frac{1}{\tau}\int_{t_{n-1}}^{t_{n}}\widehat{G}(x_{i},y_{j},t)dt& =\frac{\hat{G}(x_{i},y_{j},t_{n})+\hat{G}(x_{i},y_{j},t_{n-1})}{2}+(R_{2})_{i,j}^{n}  \\
&=\sum_{k=1}^{n}\tilde{w}_{n-k}\delta_{t}u_{i,j}^{k}+(R_{1})_{i,j}^{n-\frac{1}{2}}+(R_{2})_{i,j}^{n},
\end{aligned}
\end{equation}
with 
\begin{equation*}
\tilde{w}_{0}=\frac{w_{0}}{2},\qquad \tilde{w}_{k}=\frac{1}{2}\left(w_{k}+w_{k-1}\right),\quad 1\le k\le n-1,
\end{equation*}
and 
\begin{equation}\label{R2}
  (R_2)_{i,j}^n=\frac{1}{\tau}\int_{t_{n-1}}^{t_n}\widehat G(x_{i},y_{j},t)dt-\frac{\widehat G(x_{i},y_{j},t_n)+\widehat G(x_{i},y_{j},t_{n-1})}{2}.
\end{equation}
For the third term on the left-hand side, we apply the averaged PI rule to get 
\begin{equation}\label{e3.8}
\begin{aligned}
\frac{1}{\tau}\int_{t_{n-1}}^{t_{n}}\int_{0}^{t}\beta_{\bar{\alpha}}(t-s)\Delta u(x_{i},y_j,s)dsdt=&\frac{1}{\tau}\int_{t_{n-1}}^{t_{n}}\int_{0}^{t}\beta_{\bar{\alpha}}(t-s)\Delta \breve{u}(x_{i},y_j,s)dsdt+(R_{3})_{i,j}^{n}\\
=&\mathfrak{I}_{\bar{\alpha}}^{n}(\Delta u)_{i,j}^{n}+(R_{3})_{i,j}^n,
\end{aligned}
\end{equation}
in which $\breve{u}(\cdot,s)$ is similar to that in \eqref{consappro}, and 
\begin{equation}\label{R3}
(R_3)_{i,j}^n=\frac{1}{\tau}\int_{t_{n-1}}^{t_n}\int_0^t\beta_{\bar{\alpha}}(t-s)\left[\Delta u(x_{i},y_{j},s)-\Delta\breve{u}(x_{i},y_{j},s)\right]dsdt.
\end{equation}
Regarding the nonlinear term $f(u)$, we also introduce the notations 
\begin{equation}\nonumber
f_{i,j}^{n}=f(u(x_{i},y_{j},t_{n})),\quad f_{i,j}^{n-\frac{1}{2}}=\frac{f_{i,j}^{n}+f_{i,j}^{n-1}}{2},\quad (i,j)\in\omega_{\kappa},~~1\le n\le N. 
\end{equation}
In this case, another application of PI rule leads to
\begin{equation}\label{e3.10}
\begin{aligned}
\frac{1}{\tau}\int_{t_{n-1}}^{t_{n}}\int_0^t \beta_{\bar{\alpha}}(t-s) f(u(x_{i},y_{j},s))dsdt
=&\frac{1}{\tau}\int_{t_{n-1}}^{t_{n}}\int_0^t \beta_{\bar{\alpha}}(t-s) \breve{f}(u(x_{i},y_{j},s))dsdt+(R_{4})_{i,j}^{n}\\
=&\mathfrak{I}_{\bar{\alpha}}^{n}f_{i,j}^{n}+(R_{4})_{i,j}^n.
\end{aligned}
\end{equation}
Then, substituting \eqref{e3.6}, \eqref{e3.8} and \eqref{e3.10} into \eqref{e3.4}, we 
have 
\begin{equation}\label{e3.11}
\delta_{t}u_{i,j}^{n}+\sum_{k=1}^{n}\tilde{w}_{n-k}\delta_{t}u_{i,j}^{k}-\mathfrak{I}_{\bar{\alpha}}^{n}(\Delta u_{i,j})^{n}=\mathfrak{I}_{\bar{\alpha}}^{n}f_{i,j}^{n}+b_{i,j}^{n}+(R_{5})_{i,j}^n,
\end{equation}
where
\begin{equation*}
(R_{5})_{i,j}^n=-(R_{1})_{i,j}^{n-\frac{1}{2}}-(R_{2})_{i,j}^{n}+(R_{3})_{i,j}^{n}+(R_{4})_{i,j}^{n},\quad \text{for}~~(i,j)\in\omega_{\kappa},~~1\le n\le N.
\end{equation*}

At this point, we proceed to consider the spatial approximation of \eqref{e3.11}. Obviously, 
Lemma \ref{space2} indicates that 
\begin{equation*}
[\mathcal{A}_{\kappa,x}\p_{x}^{2} u]_{i,j}^n=[\delta_{\kappa,x}^2u]_{i,j}^n+(R_x)_{i,j}^n,\quad[\mathcal{A}_{\kappa,y}\p_{y}^{2}u]_{i,j}^n=[\delta_{\kappa,y}^{2}u]_{i,j}^n+(R_y)_{i,j}^n,
\end{equation*}
with $|(R_x)_{i,j}^n|\le Q\kappa_{x}^{4}$ and $|(R_y)_{i,j}^n|\le Q\kappa_{y}^{4}$ under the condtions in Remark \ref{regularity}. Thus, performing the compact difference operator 
$\mathcal{A}_{\kappa}$ on both sides of \eqref{e3.11}, we obtain
\begin{equation}\label{e3.12}
  [\mathcal{A}_{\kappa}\delta_{t}u]_{i,j}^{n}+\sum_{k=1}^{n}\tilde{w}_{n-k}[\mathcal{A}_{\kappa}\delta_{t}u]_{i,j}^{k}-\mathfrak{I}_{\bar{\alpha}}^{n}[\Lambda_{\kappa} u]_{i,j}^{n}=\mathfrak{I}_{\bar{\alpha}}^{n}[\mathcal{A}_{\kappa}f]_{i,j}^{n}+[\mathcal{A}_{\kappa}b]_{i,j}^{n}+(R)_{i,j}^n,
\end{equation}
and 
\begin{equation*}
(R)_{i,j}^{n}=\mathcal{A}_{\kappa}(R_{5})_{i,j}^{n}+\mathcal{A}_{\kappa,y}\mathfrak{I}_{\bar{\alpha}}^{n}(R_x)_{i,j}^n+\mathcal{A}_{\kappa,x}\mathfrak{I}_{\bar{\alpha}}^{n}(R_y)_{i,j}^n,\quad (i,j)\in\omega_{\kappa},~~1\le n\le N. 
\end{equation*}

For the convenience of subsequent analysis, we derive the bounds of $(R)_{i}^{n}$ 
in the following lemma.

\begin{lemma}\label{errorestimate}
On the basis of the regularity assumptions, we derive the following estimate for the truncation error
\begin{equation*}
\begin{aligned}
&\tau\sum_{m=1}^n\|(R)^m\|_{h,\infty}\leq Q(\tau^2+h_x^4+h_y^4),\quad~~1\leq n\leq N,\\
&\tau\sum_{m=1}^n\|(R)^m\|_{H,\infty}\leq Q(\tau^2+H_x^4+H_y^4),\quad1\leq n\leq N,
\end{aligned}
\end{equation*}
in which $Q=Q(T,\bar{\alpha},Q_g,Q_f)$.
\end{lemma}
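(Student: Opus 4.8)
The plan is to bound $(R)^m$ by estimating separately the temporal remainders $(R_1)$--$(R_4)$ inherited from $(R_5)$ and the spatial remainders $(R_x),(R_y)$, and then to sum against $\tau$. First I would strip off the compact averaging operators: since Lemma~\ref{space1} gives $\|\mathcal A_\kappa w\|_{\kappa,\infty}\le\|w\|_{\kappa,\infty}$ (and $\mathcal A_{\kappa,x},\mathcal A_{\kappa,y}$ are likewise non-expansive in the maximum norm), it suffices to control $\|(R_1)^{m-\frac12}\|_{\kappa,\infty}$, $\|(R_2)^m\|_{\kappa,\infty}$, $\|(R_3)^m\|_{\kappa,\infty}$, $\|(R_4)^m\|_{\kappa,\infty}$ together with $\|\mathfrak I_{\bar\alpha}^m(R_x)\|_{\kappa,\infty}$ and $\|\mathfrak I_{\bar\alpha}^m(R_y)\|_{\kappa,\infty}$. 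The spatial part is immediate: $|(R_x)_{i,j}^m|\le Q\kappa_x^4$ and $|(R_y)_{i,j}^m|\le Q\kappa_y^4$ hold uniformly in time, so applying the PI operator and using the weight bound $\sum_{j}\lambda_{m,j}\le Q_{\bar\alpha,T}$ from \eqref{lam3} yields $\|\mathfrak I_{\bar\alpha}^m(R_x)\|_{\kappa,\infty}\le Q\kappa_x^4$; accumulating gives $\tau\sum_{m=1}^n\|\mathfrak I_{\bar\alpha}^m(R_x)\|_{\kappa,\infty}\le QT\kappa_x^4$, and likewise $QT\kappa_y^4$. Specializing $\kappa=h$ and $\kappa=H$ later produces the two stated spatial rates.

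The recurring difficulty in the temporal remainders is the weak singularity at $t=0$: by (A2) and the chain rule (with $f\in C^2$ and $u$ bounded on $\bar\Omega\times[0,T]$) one has $|\partial_t^2 u|,|\partial_t^2\Delta u|,|\partial_t^2 f(u)|\le Qt^{\alpha_0-2}$, by \eqref{Gpro} $|\partial_t^2\widehat G|\le Qt^{\alpha_0-2}$, while $g''$ is only logarithmically integrable, $|g''(t)|\le Q_g(|\ln t|+1)$ (Lemma~\ref{gpro}). The central device is to represent every quadrature or interpolation remainder in integral (Peano-kernel) rather than pointwise (Lagrange) form, so that the singular second derivative always sits under an integral and the outer averaging $\tfrac1\tau\int_{t_{m-1}}^{t_m}dt$ absorbs it; the accumulated sums then reduce to $\tau\sum_m\int_{t_{m-1}}^{t_m}t^{\alpha_0-2}dt=\tau\int_0^{t_n}t^{\alpha_0-2}dt$, finite because $\alpha_0-1>0$. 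Concretely, for the trapezoidal term the Peano kernel gives $|(R_2)^m|\le\tfrac{\tau}{8}\int_{t_{m-1}}^{t_m}|\partial_t^2\widehat G(t)|\,dt\le Q\tau\int_{t_{m-1}}^{t_m}t^{\alpha_0-2}dt$, whence $\tau\sum_{m=1}^n\|(R_2)^m\|_{\kappa,\infty}\le Q\tau^2\int_0^{T}t^{\alpha_0-2}dt\le Q\tau^2$, with no separate treatment of $m=1$ needed.

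For the averaged PI remainders $(R_3)$ and $(R_4)$ I would split the inner history integral into its first subinterval, where $\breve u$ (resp. $\breve f$) is constant, and the remaining subintervals, where it is the endpoint average $\phi^{j-\frac12}$. On the first subinterval (A1) gives an $O(\tau^{\alpha_0})$ pointwise defect, but the decay of $\beta_{\bar\alpha}(t-s)$ for $s\in(0,\tau)$ and $t$ away from $0$, estimated via $\int_0^{\tau}\beta_{\bar\alpha}(t-s)ds\le Q\tau(t-\tau)^{\bar\alpha-1}$, renders its accumulated contribution $O(\tau^{\alpha_0+1})=o(\tau^2)$. On the later subintervals the defect $\Delta u(s)-\tfrac{\Delta u^{j-1}+\Delta u^j}{2}$ (and its $f$-analogue) is decomposed into a part linear in $s-t_{j-\frac12}$ and an $O(\tau^2 t_{j-1}^{\alpha_0-2})$ curvature part; the symmetry of the endpoint average together with the outer time-average upgrades the naive first-order defect to an accumulated second-order bound, the surviving $t^{\alpha_0-2}$ weight being integrable as above. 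The remaining term $(R_1)$ is the most delicate, since its integral remainder \eqref{R1} couples the log-singular kernel $g''(t_m-\theta)$ with the power-singular $\partial_s^2 u(s)\sim s^{\alpha_0-2}$; I would use $|g''(t_m-\theta)|\le Q_g(|\ln(t_m-\theta)|+1)$, treat the diagonal block $k=m$ (where $t_m-\theta\to 0^+$) via the integrability of $r\mapsto r|\ln r|$ on $(0,\tau)$, and for $k<m$ use a discrete-convolution estimate of $\sum_{k}(|\ln t_{m-k}|+1)\int_{t_{k-1}}^{t_k}s^{\alpha_0-2}ds$ rather than a crude worst-case product, so as to avoid a spurious $|\ln\tau|$ factor and recover the clean $\tau^2$.

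The hard part will be precisely this last point: showing that the simultaneous presence of the two singularities — the logarithmic one of $g''$ from the variable exponent and the power one $t^{\alpha_0-2}$ of the second time derivatives — does not degrade the temporal order below two. This is where the hypotheses $\alpha'(0)=0$ (needed for the improved $g''$ bound in Lemma~\ref{gpro}), the averaged-PI construction, and the strict inequality $\alpha_0>1$ (guaranteeing $\int_0^T t^{\alpha_0-2}dt<\infty$) are all used; everything else is bounded uniformly via Lemma~\ref{space1} and \eqref{lam1}--\eqref{lam3}. Summing the four temporal contributions and the two spatial ones gives $\tau\sum_{m=1}^n\|(R)^m\|_{\kappa,\infty}\le Q(\tau^2+\kappa_x^4+\kappa_y^4)$, and specializing $\kappa=h$ and $\kappa=H$ yields the two asserted estimates with $Q=Q(T,\bar\alpha,Q_g,Q_f)$.
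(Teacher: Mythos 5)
Your proposal is correct and follows essentially the same route as the paper's proof: the same six-term decomposition of $(R)^m$ after stripping the compact operators via Lemma \ref{space1}, the same spatial bound via \eqref{lam1}--\eqref{lam3}, integral-form remainders weighted by the integrable singularity $t^{\alpha_0-2}$ for $(R_2)$ (via \eqref{Gpro}), $(R_3)$ and $(R_4)$ (which the paper handles by splitting through the piecewise-linear interpolant $u^*$ and citing \cite{Mclean}, where you re-derive the same cancellation), and the same diagonal/off-diagonal split for $(R_1)$. The only technical variants are harmless: the paper dominates the logarithmic singularity of $g''$ by $Q_g t^{-\varepsilon}$ and treats the first step of $(R_2)$ separately, while your direct discrete-convolution bound $\tau\sum_{j}(|\ln t_j|+1)\le Q$ and uniform Peano-kernel estimate reach the same clean $O(\tau^2)$ bound.
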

\begin{proof}
We first obtain 
\begin{equation}\label{e3.13}
\begin{aligned}
\tau\sum_{m=1}^n\|(R)^m\|_{h,\infty}\le \tau\sum_{m=1}^n\big(&\|\mathcal{A}_{h}(R_{1})^{m-\frac{1}{2}}\|_{h,\infty}
+\|\mathcal{A}_{h}(R_{2})^{m}\|_{h,\infty}
+\|\mathcal{A}_{h}(R_{3})^{m}\|_{h,\infty}
+\|\mathcal{A}_{h}(R_{4})^{m}\|_{h,\infty}\\
&+\mathfrak{I}_{\bar{\alpha}}^{m}\|\mathcal{A}_{h,y}(R_x)^m\|_{h,\infty}
+\mathfrak{I}_{\bar{\alpha}}^{m}\|\mathcal{A}_{h,x}(R_y)^m\|_{h,\infty} \big).\\
\end{aligned}
\end{equation}
Each of \eqref{e3.13} will be estimated. By \eqref{R1}, we split $(R_1)_{i,j}^m=(R_{11})_{i,j}^m+(R_{12})_{i,j}^m$, and
\begin{align*}
    (R_{11})_{i,j}^m &= \sum\limits_{k=1}^{m}    \int_{t_{k-1}}^{t_k} \Biggl[  \frac{t_{k-1}-s}{\tau} \int_{s}^{t_k} g''(t_m-\theta)(t_k-\theta)d\theta\Biggl]\p_{s}^2 u(x_i,y_j,s)ds,\\
    (R_{12})_{i,j}^m&=\sum\limits_{k=1}^{m}\int_{t_{k-1}}^{t_k} \Biggl[\frac{s-t_k}{\tau} \int_{s}^{t_{k-1}} g''(t_m-\theta)(t_{k-1}-\theta)d\theta   \Biggl] \p_{s}^2 u(x_i,y_j,s)ds.
\end{align*}
For $(R_{11})_i^m$, based on Lemma \ref{gpro} and the regularity assumptions, we yield
\begin{equation*}
\begin{aligned}
\left\|(R_{11})^m\right\|_{h,\infty}
\le & Q_g\sum_{k=1}^{m-1}\int_{t_{k-1}}^{t_{k}}\frac{t_{k-1}-s}{\tau}\int_{s}^{t_{k}}(t_{m}-\theta)^{-\varepsilon}(t_{k}-\theta)d\theta \left\|\p_{s}^2 u(\cdot,s)\right\|_{h,\infty}ds\\
&+ Q_g\int_{t_{m-1}}^{t_{m}}\frac{t_{m-1}-s}{\tau}\int_{s}^{t_{m}}(t_{m}-\theta)^{1-\varepsilon}d\theta \left\|\p_{s}^2 u(\cdot,s)\right\|_{h,\infty}ds\\
\le & Q\tau^{2}\sum_{k=1}^{m-1}\int_{t_{k-1}}^{t_{k}}(t_{m}-s)^{-\varepsilon}s^{\alpha_{0}-2}ds
+ Q\tau\int_{t_{m-1}}^{t_{m}}(t_{m}-s)^{1-\varepsilon}s^{\alpha_{0}-2}ds\\
\le & Q\tau^{2}\int_{0}^{t_{m}}(t_{m}-s)^{-\varepsilon}s^{\alpha_{0}-2}ds 
+ Q\tau^{2-\varepsilon}\int_{t_{m-1}}^{t_{m}}s^{\alpha_{0}-2}ds\\
\le &  Q\left[\tau^{2}t_{m}^{\alpha_{0}-(1+\varepsilon)}
+\tau^{2-\varepsilon}\left(t_{m}^{\alpha_{0}-1}-t_{m-1}^{\alpha_{0}-1}\right)\right].
\end{aligned}
\end{equation*}
Applying a similar argument to $(R_{12})_i^m$ leads to the same estimate, and hence we obtain
\begin{equation*}
    \tau \sum\limits_{m=1}^{n} \left\| (R_1)^m \right\|_{h,\infty}
\leq Q\left(\tau^{2}+\tau^{3-\varepsilon}T^{\alpha_0-1}\right)\le Q\tau^2,
\end{equation*}
which, together with Lemma \ref{space1} and triangle inequality, implies
\begin{equation}\label{3.13-1}
  \tau\sum_{m=1}^n\|\mathcal{A}_{h}(R_{1})^{m-\frac{1}{2}}\|_{h,\infty}\le \tau\sum_{m=1}^n\|(R_{1})^{m-\frac{1}{2}}\|_{h,\infty}\le Q\tau^{2}.
\end{equation}
Then, from \eqref{R2}, Taylor expansion gives
\begin{equation*}
\begin{aligned}
(R_2)_{i,j}^n=&\frac{1}{\tau}\int_{t_{n-1}}^{t_n}\widehat{G}(x_{i},y_j,t)dt-\widehat{G}(x_{i},y_j,t_{n-\frac{1}{2}})+\widehat{G}(x_{i},y_j,t_{n-\frac{1}{2}})-\frac{\widehat{G}(x_{i},y_j,t_n)+\widehat{G}(x_{i},y_j,t_{n-1})}{2}\\
=&\frac{1}{2\tau}\int_{t_{n-1}}^{t_{n-\frac{1}{2}}}(t-t_{n-1})^{2}\partial_{t}^{2}\widehat{G}(x_{i},y_j,t)dt+\frac{1}{2\tau}\int_{t_{n-\frac{1}{2}}}^{t_{n}}(t_{n}-t)^{2}\partial_{t}^{2}\widehat{G}(x_{i},y_j,t)dt\\
&+\frac{1}{2}\int_{t_{n-1}}^{t_{n-\frac{1}{2}}}(t_{n-1}-t)\partial_{t}^{2}\widehat G(x_{i},y_j,t)dt+\frac{1}{2}\int_{t_{n-\frac{1}{2}}}^{t_{n}}(t-t_{n})\partial_{t}^{2}\widehat G(x_{i},y_j,t)dt,
\end{aligned}
\end{equation*}
which, together with the estimate in \eqref{Gpro}, leads to
\begin{equation*}
\begin{aligned}
\tau\sum_{m=1}^n\|(R_{2})^{m}\|_{h,\infty}\leq&\tau\int_{0}^{t_{\frac{1}{2}}}t\left\|\p_{t}^{2}\widehat{G}(\cdot,t)\right\|_{h,\infty}dt+\tau^{2}\int_{t_{\frac{1}{2}}}^{t_{1}}\left\|\p_{t}^{2}\widehat{G}(\cdot,t)\right\|_{h,\infty}dt\\
&\quad +2\sum_{m=2}^{n}\tau^{2}\int_{t_{m-1}}^{t_{m}}\left\|\p_{t}^{2}\widehat{G}(\cdot,t)\right\|_{h,\infty}dt \\
\leq&Q\tau\int_{0}^{t_{\frac{1}{2}}}t^{\alpha_{0}-1}dt+Q\tau^{2}\int_{t_{\frac{1}{2}}}^{t_{1}}t^{\alpha_{0}-2}dt+Q\tau^{2}\sum_{m=2}^{n}\int_{t_{m-1}}^{t_{m}}t^{\alpha_{0}-2}dt\\
\leq&Q\tau^{2},
\end{aligned}
\end{equation*}
thus we have
\begin{equation}\label{3.13-2}
\tau\sum_{m=1}^n\|\mathcal{A}_{h}(R_{2})^{m}\|_{h,\infty}\le \tau\sum_{m=1}^n\|(R_{2})^{m}\|_{h,\infty}\le Q\tau^{2}.
\end{equation}

Next, we focus on the bounds of $(R_{3})^{n}$ and $(R_{4})^{n}$. Split 
\eqref{R3} as follows
\begin{equation*}
\begin{aligned}
(R_{3})_{i,j}^{n}=&\frac{1}{\tau}\int_{t_{n-1}}^{t_{n}}\int_{0}^{t}\beta_{\bar{\alpha}}(t-s)\left[\Delta u(x_{i},y_{j},s)-\Delta u^{*}(x_{i},y_{j},s)\right]dsdt\\
&+\frac{1}{\tau}\int_{t_{n-1}}^{t_{n}}\int_{0}^{t}\beta_{\bar{\alpha}}(t-s)\left[\Delta u^{*}(x_{i},y_{j},s)-\Delta\breve{u}(x_{i},y_{j},s)\right]dsdt,
\end{aligned}
\end{equation*}
with
\begin{equation*}
u^*(x_{i},y_{j},s)=
\begin{cases}
u_{i,j}^{1},&0<s<\tau,\\
\frac{1}{\tau}[(t_i-s)u_{i,j}^{n-1}+(s-t_{i-1})u_{i,j}^{n}],&t_{n-1}<s<t_n,\quad n\ge2.
\end{cases}
\end{equation*}
Subsequently, from \cite{Mclean}, we yield
\begin{equation}\nonumber
\begin{aligned}
\tau\sum_{m=1}^n\|(R_3)^m\|_{h,\infty}\le& Q_{\bar{\alpha},T}\left(\int_{0}^{t_{1}}t\left\|\Delta\partial_{t}u(\cdot,t)\right\|_{h,\infty}dt+\tau^{2}\int_{t_{1}}^{t_{n}}\left\|\Delta\partial_{t}^{2}u(\cdot,t)\right\|_{h,\infty}dt\right)\\
&+Q_{\bar{\alpha},T}\left(\tau\int_{t_{1}}^{t_{2}}\|\Delta\partial_{t}u(\cdot,t)\|_{h,\infty}dt+\tau^{2}\int_{t_{1}}^{t_{n}}\left\|\Delta\partial_{t}^{2}u(\cdot,t)\right\|_{h,\infty}dt\right)\\
\leq &Q\tau\int_0^{2\tau}t^{\alpha_0-1}dt+Q\tau^2\int_\tau^{t_n}t^{\alpha_0-2}dt
\leq Q\tau^2,
\end{aligned}
\end{equation}
thus, 
\begin{equation}\label{3.13-3}
\tau\sum_{m=1}^n\|\mathcal{A}_{h}(R_{3})^{m}\|_{h,\infty}\le \tau\sum_{m=1}^n\|(R_{3})^{m}\|_{h,\infty}\le Q\tau^{2}.
\end{equation}
Similarly, by combining this with the chain rule, we obtain 
\begin{equation}\label{3.13-4}
\begin{aligned}
\tau\sum_{m=1}^n\|\mathcal{A}_{h}(R_4)^m\|_{h,\infty} & \leq Q_{\bar{\alpha},T}Q_{f}\left(\int_{0}^{t_{1}}t\left\|\partial_{t}u(\cdot,t)\right\|_{h,\infty}dt+\tau^{2}\int_{t_{1}}^{t_{n}}\left\|\partial_{t}^{2}u(\cdot,t)\right\|_{h,\infty}dt\right)\\
&+Q_{\bar{\alpha},T}Q_{f}\left(\tau\int_{t_{1}}^{t_{2}}\|\partial_{t}u(\cdot,t)\|_{h,\infty}dt+\tau^{2}\int_{t_{1}}^{t_{n}}\left\|\partial_{t}^{2}u(\cdot,t)\right\|_{h,\infty}dt\right)
\leq  Q\tau^2.
\end{aligned}
\end{equation}
At last, with the help of \eqref{lam1}-\eqref{lam3}, we have 
\begin{equation}\label{3.13-5}
\begin{aligned}
&\tau\sum_{m=1}^{n}\mathfrak{I}_{\bar{\alpha}}^{m}\|(R_{x})^{m}\|_{h,\infty} \\
=&\tau\left[\lambda_{1,1}\|(R_{x})^{1}\|_{h,\infty}+\sum_{m=2}^{n}\left(\lambda_{m,1}\|(R_{x})^{1}\|_{h,\infty}+\sum_{k=2}^{m}\lambda_{m,k}\|(R_{x})^{k-\frac{1}{2}}\|_{h,\infty}\right)\right] \\
\leq& Q\tau h_{x}^4\sum_{m=1}^n\sum_{k=1}^m\lambda_{m,k}\leq Qh_{x}^4,
\end{aligned}  
\end{equation}
and 
\begin{equation}\label{3.13-6}
\tau\sum_{m=1}^{n}\mathfrak{I}_{\bar{\alpha}}^{m}\|(R_{y})^{m}\|_{h,\infty}\leq Qh_{y}^4.
\end{equation}

Taking \eqref{3.13-1}, \eqref{3.13-2}, \eqref{3.13-3}, \eqref{3.13-4}, \eqref{3.13-5} and \eqref{3.13-6} into \eqref{e3.13}, it turns into
\begin{equation*}
  \tau\sum_{m=1}^n\|(R)^m\|_{h,\infty}\leq Q(\tau^2+h_x^4+h_y^4).
\end{equation*}
Since the truncation error on the coarse grid can be estimated in a similar manner, we omit the details here. This completes the proof.
\end{proof}

Next, omitting the local truncation error term $R_{i,j}^{n}$ in
\eqref{e3.12}, and replacing $u_{i,j}^{n}$ with its numerical approximation
$U_{i,j}^{n}$, we establish a standard compact difference scheme on the fine grid 
\begin{equation}\label{standard fine}
\begin{aligned}
&[\mathcal{A}_{h}\delta_{t}U]_{i,j}^{n}+\sum_{k=1}^{n}\tilde{w}_{n-k}[\mathcal{A}_{h}\delta_{t}U]_{i,j}^{k}-\mathfrak{I}_{\bar{\alpha}}^{n}[\Lambda_{h} U]_{i,j}^{n}=\mathfrak{I}_{\bar{\alpha}}^{n}[\mathcal{A}_{h}f(U_{i,j}^{n})]+[\mathcal{A}_{h}b]_{i,j}^{n},\quad (i,j)\in\omega_{h},~~1\le n\le N,\\
&U_{i,j}^0=u_{0}(x_{i},y_{j}),\qquad(i,j)\in\omega_{h},\\
&U_{i,j}^n=0,\qquad(i,j)\in\partial\omega_{h},\quad 0\leq n\leq N, 
\end{aligned}
\end{equation}
and a standard compact difference scheme on the coarse grid 
\begin{equation}\label{standard coarse}
\begin{aligned}
&[\mathcal{A}_{H}\delta_{t}U]_{i,j}^{n}+\sum_{k=1}^{n}\tilde{w}_{n-k}[\mathcal{A}_{H}\delta_{t}U]_{i,j}^{k}-\mathfrak{I}_{\bar{\alpha}}^{n}[\Lambda_{H} U]_{i,j}^{n}=\mathfrak{I}_{\bar{\alpha}}^{n}[\mathcal{A}_{H}f(U_{i,j}^{n})]+[\mathcal{A}_{H}b]_{i,j}^{n},\quad (i,j)\in\omega_{H},~~1\le n\le N,\\
&U_{i,j}^0=u_{0}(x_{i},y_{j}),\qquad(i,j)\in\omega_{H},\\
&U_{i,j}^n=0,\qquad(i,j)\in\partial\omega_{H},\quad 0\leq n\leq N.
\end{aligned}
\end{equation}

\subsection{Construction of STG compact difference scheme}
The key ingredient in constructing the STG scheme is the high-order mapping operator that transfers solutions from the coarse-grid space to the fine-grid space. Thus, we introduce below the 2D bicubic spline interpolation operator and summarize its main properties, as discussed in \cite{fuhongfei}.

To this end, we first review the one-dimensional cubic spline interpolation operator
along $x$-direction. For any continuous function $w(x)$, with the notation 
$w_{i}=w(x_{i})$ for $0\le i\le M_{H,x}$, the cubic spline 
interpolation operator is defined as 
\begin{equation*}
\begin{aligned}
\Pi_{H,x}w(x) =&\mathcal{M}_{i-1}\frac{(x_{i}-x)^{3}}{6H_{x}}+\mathcal{M}_{i}\frac{(x-x_{i-1})^{3}}{6H_{x}}+\left(w_{i-1}-\frac{\mathcal{M}_{i-1}H_{x}^{2}}{6}\right)\frac{x_{i}-x}{H_{x}} \\
&+\left(w_{i}-\frac{\mathcal{M}_{i}H_{x}^{2}}{6}\right)\frac{x-x_{i-1}}{H_{x}},\quad x\in[x_{i-1},x_{i}],\quad i=1,2,\cdots,M_{H,x}.
\end{aligned}
\end{equation*}
Here, the coefficients 
$\mathbf{M}=[\mathcal{M}_1,\mathcal{M}_2,\cdots,\mathcal{M}_{M_{H,x}-1}]^\top$ are 
given by the linear algebraic systems $\mathbf{AM=D}$ where 
$\mathbf{A}=\frac{1}{2}\text{tridiag}(1,4,1)$ and 
$\mathbf{D}=[D_1,D_2,\cdots,D_{M_{H,x}-1}]^\top$ with 
$D_1=3[\delta_{H,x}^{2}w]_1-\frac{1}{2}\mathcal{M}_0$, 
$D_{M_{H,x}-1}=3[\delta_{H,x}^{2}w]_{M_{H,x}-1}-\frac{1}{2}\mathcal{M}_{M_{H,x}}$ and 
$D_i=3[\delta_{H,x}^{2}w]_i$ for $i=2,3,\cdots,M_{H,x}-2$. The coefficients $\mathcal{M}_0$ 
and $\mathcal{M}_{M_{H,x}}$ represent the boundary values of the second-order derivatives of the 
interpolation function and need to be specified based on a priori information, such 
as the second-order boundary condition of \eqref{VtFDEs}-\eqref{bc}.

Similarly, we can define the one-dimensional cubic spline interpolation operator 
$\Pi_{H,y}$ along $y$-direction. Thus, the 2D bicubic spline 
interpolation operator could be defined as the superposition of one-dimensional 
cubic spline interpolation operators in different directions, i.e., 
$\Pi_{H}=\Pi_{H,y}\Pi_{H,x}$. When no confusion caused, in the rest of this work, 
we denote $(\Pi_Hw)_{i,j}=\Pi_Hw_{i,j}$ for $(i,j)\in\bar{\omega}_h$ and $w\in\mathfrak{U}_{H}$.

Next, we present several important properties of operator $\Pi_H$.

\begin{lemma}\label{inteesti1}
For any function $v\in W^{4,\infty}(\Omega)$, there exists a positive constant $Q$ 
independent of $H_{x}$ and $H_{y}$ such that
\begin{equation*}
  \|v-\Pi_H v\|_{h,\infty} \leq Q\left(H_x^4+H_y^4\right).
\end{equation*}
\end{lemma}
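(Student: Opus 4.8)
The plan is to reduce the two-dimensional estimate to the classical one-dimensional cubic spline error bound by exploiting the tensor-product structure $\Pi_H = \Pi_{H,y}\Pi_{H,x}$. First I would recall the standard one-dimensional result: for a sufficiently smooth univariate function $\varphi$, the cubic spline interpolant with the second-derivative endpoint data prescribed satisfies $\|\varphi - \Pi_{H,x}\varphi\|_{\infty} \le Q H_x^4 \|\varphi^{(4)}\|_{\infty}$, and analogously in the $y$-direction. This is the backbone of the whole argument, and for $v\in W^{4,\infty}(\Omega)$ the relevant fourth derivatives $\partial_x^4 v$ and $\partial_y^4 v$ are controlled by the $W^{4,\infty}$ norm.

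Next I would insert a telescoping identity to split the two-dimensional error into two one-dimensional pieces. Writing
\begin{equation*}
v-\Pi_H v=\left(v-\Pi_{H,x}v\right)+\Pi_{H,x}\left(v-\Pi_{H,y}v\right)+\left(\Pi_{H,x}\Pi_{H,y}v-\Pi_{H,y}v\right),
\end{equation*}
or more cleanly $v-\Pi_H v=(v-\Pi_{H,y}v)+\Pi_{H,y}(v-\Pi_{H,x}v)$ after using $\Pi_H=\Pi_{H,y}\Pi_{H,x}$ and commutativity of the two directional operators on smooth functions, I reduce the problem to bounding each directional residual. The first term $\|v-\Pi_{H,y}v\|_{h,\infty}$ is handled by applying the one-dimensional bound in $y$ uniformly in $x$, yielding $QH_y^4$. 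For the second term I would first apply the one-dimensional bound in $x$ to get $\|v-\Pi_{H,x}v\|_{h,\infty}\le QH_x^4$, and then use the $L^\infty$-stability of the interpolation operator $\Pi_{H,y}$ (that is, $\|\Pi_{H,y}w\|_{\infty}\le Q\|w\|_{\infty}$, a consequence of the diagonal dominance of the tridiagonal matrix $\mathbf{A}=\tfrac12\,\mathrm{tridiag}(1,4,1)$) to transfer that bound through $\Pi_{H,y}$ without losing the order. Summing the two contributions gives the claimed $Q(H_x^4+H_y^4)$.

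The main obstacle I anticipate is the uniform $L^\infty$-stability of the one-dimensional spline operator, which is exactly what lets me pass the $x$-direction error estimate through the $y$-direction interpolation without degradation. This reduces to showing that $\mathbf{A}^{-1}$ has row sums (equivalently, $\|\mathbf{A}^{-1}\|_\infty$) bounded independently of the mesh, which follows from strict diagonal dominance of $\mathbf{A}$ since each interior row has diagonal entry $2$ dominating off-diagonal sum $1$; the boundary modifications in $D_1$ and $D_{M_{H,x}-1}$ only contribute prescribed second-derivative data and do not spoil the bound. A secondary technical point is verifying that the discrete max-norm $\|\cdot\|_{h,\infty}$ taken over the fine grid $\bar\omega_h$ is dominated by the continuous sup-norm of the interpolation residual, which is immediate since fine-grid nodes are a subset of $\bar\Omega$. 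Once stability is in hand, the rest is a routine assembly of the directional one-dimensional estimates.
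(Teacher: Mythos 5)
The paper itself offers no proof of this lemma: it is quoted verbatim from the two-grid paper of Fu et al.\ \cite{fuhongfei} (``we introduce below the 2D bicubic spline interpolation operator and summarize its main properties, as discussed in \cite{fuhongfei}''). So the comparison here is against the standard argument in that reference, and your skeleton --- reduce to the one-dimensional cubic spline bound with exact second-derivative end data via the tensor-product splitting $v-\Pi_H v=(v-\Pi_{H,y}v)+\Pi_{H,y}(v-\Pi_{H,x}v)$, then invoke sup-norm stability coming from the strict diagonal dominance of $\mathbf{A}=\tfrac12\,\mathrm{tridiag}(1,4,1)$ --- is exactly the right one. Note in passing that your first displayed ``telescoping identity'' is wrong as written: its three terms sum to $v-\Pi_{H,y}v$, not $v-\Pi_H v$. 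The second, clean identity is correct (and needs no commutativity at all, since $\Pi_{H,y}(v-\Pi_{H,x}v)=\Pi_{H,y}v-\Pi_H v$ by linearity), so this is a cosmetic slip.

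The genuine gap is in the sentence claiming the boundary modifications in $D_1$, $D_{M-1}$ ``do not spoil the bound.'' Stability of $\Pi_{H,y}$ through $\|\mathbf{A}^{-1}\|_\infty\le 1$ controls only the contribution of the nodal values of the residual $g=v-\Pi_{H,x}v$; the prescribed end coefficients $\mathcal{M}_0,\mathcal{M}_{M_{H,y}}$ enter the spline formula directly with weight $O(H_y^2)$, and by linearity the end data carried by $g$ is $(\mathrm{id}-\Pi_{H,x})\partial_y^2 v$ at the boundary. Under the stated hypothesis $v\in W^{4,\infty}(\Omega)$ you do \emph{not} control $\partial_x^4\partial_y^2 v$ (a sixth-order mixed derivative), so this end data is only $O\bigl(H_x^2\,\|\partial_x^2\partial_y^2 v\|_{L^\infty}\bigr)$, not $O(H_x^4)$; its contribution to the second term is therefore $O(H_x^2H_y^2)$, and the lemma survives only after the extra quantitative step
\begin{equation*}
H_x^2H_y^2\le \tfrac12\left(H_x^4+H_y^4\right),
\end{equation*}
which your sketch omits. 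Alternatively one must assume the additional mixed regularity $\partial_x^4\partial_y^2 v\in L^\infty$ to make the end data $O(H_x^4)$ outright. Either way, the assertion that the boundary rows are harmless is precisely the point that requires proof; once you insert the estimate of $(\mathrm{id}-\Pi_{H,x})\partial_y^2 v$ and the Young-type absorption above, the rest of your assembly (one-dimensional $O(H_y^4)$ bound for the first term, $\|\mathbf{A}^{-1}\|_\infty$-based stability for the interior part of the second, and the trivial domination of $\|\cdot\|_{h,\infty}$ by the continuous sup-norm) is sound and matches the cited proof.
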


\begin{lemma}\label{inteesti2}
For any grid function $v\in \dot{\mathfrak{U}}_{H}$, if the boundary values of the interpolation 
coefficients of $\Pi_H v$ are zero, we have $\|\Pi_{H}v\|_h\leq 48\|v\|_H$.
\end{lemma}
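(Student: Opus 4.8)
The plan is to exploit the tensor-product structure $\Pi_H=\Pi_{H,y}\Pi_{H,x}$ together with the product form of the discrete norm $\|w\|_h^2=h_xh_y\sum_{p,q}w_{p,q}^2$, so that the two-dimensional bound reduces to a single one-dimensional operator estimate applied successively in each coordinate. Introduce the one-dimensional weighted norms $\|w\|_{H,x}^2=H_x\sum_i w_i^2$ and $\|W\|_{h,x}^2=h_x\sum_p W_p^2$ (and analogously in $y$), and suppose the one-dimensional cubic-spline operator obeys $\|\Pi_{H,x}w\|_{h,x}\le\sqrt{48}\,\|w\|_{H,x}$ whenever $w$ vanishes at the endpoints and has vanishing boundary coefficients. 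Setting $W:=\Pi_{H,x}v$, I would first apply the $y$-estimate to each slice $\{(\Pi_H v)_{p,q}\}_q=\Pi_{H,y}\{W_{p,j}\}_j$ for fixed fine index $p$, then sum over $p$ with weight $h_x$ and apply the $x$-estimate to each slice $\{W_{p,j}\}_p=\Pi_{H,x}\{v_{i,j}\}_i$ for fixed coarse index $j$; the two applications multiply to give $\|\Pi_H v\|_h\le(\sqrt{48})^2\|v\|_H=48\|v\|_H$. A point needing care is that the hypotheses propagate through the tensor structure: since $v$ vanishes on $\partial\omega_H$ and the boundary interpolation coefficients of $\Pi_H v$ are zero by assumption, the intermediate data $W$ inherit homogeneous boundary values and vanishing boundary coefficients, so the one-dimensional estimate is legitimately invoked at each stage.

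For the one-dimensional estimate itself, I would first control the spline moments $\mathbf{M}$. Because the boundary coefficients $\mathcal{M}_0,\mathcal{M}_{M_{H,x}}$ vanish, the right-hand side of the defining system reduces to $\mathbf{D}=3\boldsymbol{\delta}$ with $\boldsymbol{\delta}_i=[\delta_{H,x}^2 w]_i$. Since $\mathbf{A}=\tfrac12\,\mathrm{tridiag}(1,4,1)$ is symmetric positive definite with spectrum contained in $(1,3)$, one has $\|\mathbf{A}^{-1}\|_2<1$ and hence $\|\mathbf{M}\|_2\le 3\|\boldsymbol{\delta}\|_2$; bounding the second-difference operator by $\|\boldsymbol{\delta}\|_2\le\tfrac{4}{H_x^2}\big(\sum_i w_i^2\big)^{1/2}$ then yields $H_x^2\|\mathbf{M}\|_2\le 12\big(\sum_i w_i^2\big)^{1/2}$, i.e. the moments are controlled in $\ell^2$ by the nodal values. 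Next, on each subinterval $[x_{i-1},x_i]$ I would rewrite the spline in the scaled variable $s=(x-x_{i-1})/H_x\in[0,1]$ as
\[
\Pi_{H,x}w=(1-s)w_{i-1}+s\,w_i-\tfrac{H_x^2}{6}\big[s(1-s)(2-s)\,\mathcal{M}_{i-1}+s(1-s)(1+s)\,\mathcal{M}_i\big],
\]
whose four basis functions are uniformly bounded on $[0,1]$. Sampling at the $J$ fine nodes of each coarse cell and summing with discrete Cauchy--Schwarz reduces $\|\Pi_{H,x}w\|_{h,x}^2$ to a fixed linear combination of $\sum_i w_i^2$ and $H_x^4\|\mathbf{M}\|_2^2$, which by the moment bound is at most $48\,H_x\sum_i w_i^2=48\|w\|_{H,x}^2$.

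Combining the one-dimensional bound in each direction with the tensor argument of the first paragraph closes the proof. The main obstacle I anticipate is quantitative rather than structural: tracking the numerical constant so that it closes at exactly $48$, since the coarse-to-fine amplification, the suprema of the local cubic basis, the factor $3\|\mathbf{A}^{-1}\|_2$, and the second-difference norm all feed into it, and a careless estimate could inflate the constant. The secondary subtlety is the careful bookkeeping of the boundary hypotheses so that the one-dimensional lemma applies at both stages of the tensor reduction; the remaining ingredients—the spectral bound $\|\mathbf{A}^{-1}\|_2<1$, the bound $\|\mathrm{tridiag}(1,-2,1)\|_2<4$, and the boundedness of the local basis—are routine.
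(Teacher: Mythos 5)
The paper never proves this lemma itself --- it is imported verbatim from \cite{fuhongfei} --- so the only comparison available is with the standard argument of that reference, which your proposal reconstructs faithfully: control of the spline moments through the uniformly well-conditioned tridiagonal system, a local bound on the scaled cubic basis on each coarse cell, and tensorization of the resulting one-dimensional estimate via $\Pi_H=\Pi_{H,y}\Pi_{H,x}$. All your individual claims check out: the spectrum of $\mathbf{A}=\frac12\mathrm{tridiag}(1,4,1)$ does lie in $(1,3)$, so $\|\mathbf{M}\|_2\le 3\|\boldsymbol{\delta}\|_2\le \frac{12}{H_x^2}\bigl(\sum_i w_i^2\bigr)^{1/2}$ (using $\|\mathrm{tridiag}(1,-2,1)\|_2<4$, valid here since $w$ vanishes at the endpoints); your rescaled representation of the spline on $[x_{i-1},x_i]$ is algebraically correct; and the tensor step is legitimate because a zero coarse row with zero boundary moments produces the identically zero spline, so the intermediate slices $W_{p,\cdot}$ inherit homogeneous boundary data and the one-dimensional bound applies uniformly in the fine index $p$, with the squared constants multiplying exactly as you state. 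The one issue you flag --- whether the constant closes at exactly $48$ --- is in fact unproblematic, and crude bookkeeping suffices: since $s(1-s)(2-s)\le\frac12$ and $s(1-s)(1+s)\le\frac12$ on $[0,1]$, the inequality $(a+b+c+d)^2\le 4(a^2+b^2+c^2+d^2)$ gives, at each fine node in cell $i$, $|\Pi_{H,x}w|^2\le 4\bigl(w_{i-1}^2+w_i^2\bigr)+\frac{H_x^4}{36}\bigl(\mathcal{M}_{i-1}^2+\mathcal{M}_i^2\bigr)$; summing the $J$ fine nodes per cell with weight $h_x$ (so that $h_xJ=H_x$) and noting each coarse node lies in at most two cells yields $\|\Pi_{H,x}w\|_{h,x}^2\le 8H_x\sum_i w_i^2+\frac{H_x^5}{18}\|\mathbf{M}\|_2^2\le 16\,\|w\|_{H,x}^2$, i.e.\ a per-direction constant of $4\le\sqrt{48}$, whence the tensor argument gives $\|\Pi_Hv\|_h\le 16\|v\|_H\le 48\|v\|_H$. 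So your proof is correct as outlined; the only refinement worth making explicit is that the lemma's hypothesis (vanishing boundary coefficients of $\Pi_Hv$) must be read as vanishing boundary moments both in the $x$-stage and in every $y$-stage slice, since that is precisely what the two applications of the one-dimensional estimate consume.
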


At this point, on the basis of standard compact difference schemes \eqref{standard fine} 
and \eqref{standard coarse}, we apply the high-order mapping operator
$\Pi_H$ to establish STG compact difference scheme for \eqref{VtFDEs}-\eqref{bc} 
as follows:\par 
\textbf{Step 1.} On the coarse grid, we compute the following small-scale nonlinear system to 
find the rough solution $U_{H}^{n}$ 
\begin{align}
&c_{0}[\mathcal{A}_{H}\delta_{t}U_{H}]_{i,j}^{1}-\lambda_{1,1}[\Lambda_{H} U_{H}]_{i,j}^{1}=\lambda_{1,1}\mathcal{A}_{H}f([U_{H}]_{i,j}^{1})+[\mathcal{A}_{H}b]_{i,j}^{1},\quad (i,j)\in\omega_{H},\label{coarse-scheme-1}\\
&c_{0}[\mathcal{A}_{H}\delta_{t}U_{H}]_{i,j}^{n}+\sum_{k=1}^{n-1}\tilde{w}_{n-k}[\mathcal{A}_{H}\delta_{t}U_{H}]_{i,j}^{k}-\mathfrak{I}_{\bar{\alpha}}^{n}[\Lambda_{H} U_H]_{i,j}^{n}=\frac{\lambda_{n,n}\mathcal{A}_{H}f([U_{H}]_{i,j}^{n})}{2}\nonumber\\
&\qquad\quad +\sum_{k=1}^{n-1}\widetilde{\lambda}_{n,k}\mathcal{A}_{H}f([U_{H}]_{i,j}^{k})+[\mathcal{A}_{H}b]_{i,j}^{n},\qquad (i,j)\in\omega_{H},\quad 2\le n\le N,\label{coarse-scheme-2}\\
&[U_{H}]_{i,j}^0=u_{0}(x_{i},y_{j}),\quad [\p_{t}U_{H}]_{i,j}^0=\bar{u}_{0}(x_{i},y_{j}),\qquad(i,j)\in\omega_{H},\label{coarse-scheme-3}\\
&[U_{H}]_{i,j}^n=0,\qquad(i,j)\in\partial\omega_{H},\quad 0\leq n\leq N,\label{coarse-scheme-4}
\end{align}
with $c_0 = 1 + \frac{w_0}{2}$ and 
\begin{equation*}
\widetilde{\lambda}_{n,1}=\lambda_{n,1}+\frac{\lambda_{n,2}}{2},\qquad \widetilde{\lambda}_{n,k}=\frac{1}{2}\left(\lambda_{n,k+1}+\lambda_{n,k}\right),\quad 2\le k\le n-1.
\end{equation*}

\textbf{Step 2.} On the fine grid, with the help of the rough solution $U_{H}^{n}$ 
and bicubic spline interpolation operator $\Pi_{H}$, we turn to solve a large-scale 
linear system to obtain the accurate numerical solution $U_{h}^{n}$ 
\begin{align}
&c_{0}[\mathcal{A}_{h}\delta_{t}U_{h}]_{i,j}^{1}-\lambda_{1,1}[\Lambda_{h} U_{h}]_{i,j}^{1}=\lambda_{1,1}[\mathcal{A}_{h}F]_{i,j}^{1}+[\mathcal{A}_{h}b]_{i,j}^{1},\quad (i,j)\in\omega_{h},\label{fine-scheme-1}\\
&c_{0}[\mathcal{A}_{h}\delta_{t}U_{h}]_{i,j}^{n}+\sum_{k=1}^{n-1}\tilde{w}_{n-k}[\mathcal{A}_{h}\delta_{t}U_{h}]_{i,j}^{k}-\mathfrak{I}_{\bar{\alpha}}^{n}[\Lambda_{h} U_h]_{i,j}^{n}=\frac{\lambda_{n,n}}{2}[\mathcal{A}_{h}F]_{i,j}^{n}\nonumber\\
&\qquad +\sum_{k=1}^{n-1}\widetilde{\lambda}_{n,k}\mathcal{A}_{h}f([U_{h}]_{i,j}^{k})+[\mathcal{A}_{h}b]_{i,j}^{n},\qquad (i,j)\in\omega_{h},\quad 2\le n\le N,\label{fine-scheme-2}\\
&[U_{h}]_{i,j}^0=u_{0}(x_{i},y_{j}),\quad [\p_{t}U_{h}]_{i,j}^0=\bar{u}_{0}(x_{i},y_{j}),\qquad(i,j)\in\omega_{h},\label{fine-scheme-3}\\
&[U_{h}]_{i,j}^n=0,\qquad(i,j)\in\partial\omega_{h},\quad 0\leq n\leq N,\label{fine-scheme-4}
\end{align}
where $F_{i,j}^n$ represents the Newton linearization of $f$ via 
\begin{equation}\nonumber
  F_{i,j}^n = f([\Pi_H U_H]_{i,j}^n) + f'([\Pi_H U_H]_{i,j}^n)([U_h]_{i,j}^n-[\Pi_H U_H]_{i,j}^n).
\end{equation}

\section{Analysis of STG compact difference scheme}\label{sec4}
In this section, our objective is to discuss the stability and convergence of the 
STG compact difference scheme \eqref{coarse-scheme-1}-\eqref{fine-scheme-4}.

\subsection{Stability}
We first use the energy argument to consider the stability of the scheme  \eqref{coarse-scheme-1}- \eqref{coarse-scheme-4} on the coarse grid.

\begin{theorem}\label{staboncoarse}
If the temporal stepsize $\tau$ is sufficiently small, the numerical solution 
of nonlinear compact difference system \eqref{coarse-scheme-1}-\eqref{coarse-scheme-4} 
on the coarse grid is stable, and there exists a positive constant 
$Q=Q(T,\bar{\alpha},Q_g,Q_f)$ such that 
\begin{equation}\nonumber
\left\|U_{H}^{n}\right\|_{H}\le Q\left(\left\|u_0\right\|_{H}+\left\|f(u_0)\right\|_H+\left\|\bar{u}_0\right\|_H\right), \quad 1\leq n \leq N.
\end{equation}
\end{theorem}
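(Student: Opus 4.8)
The plan is to run a discrete energy argument, testing the scheme against exactly the functions that make Lemma~\ref{PIpositive} applicable. Writing $v^{n}:=U_{H}^{n}$ and $\mathcal E^{n}:=\|v^{n}\|_{\mathcal A_{H}}^{2}$, I would take the $(\cdot,\cdot)_{H}$ inner product of \eqref{coarse-scheme-1} with $v^{1}$ and of \eqref{coarse-scheme-2} with $v^{n-\frac12}$, multiply each by $\tau$, and sum over $n$ from $1$ to $m$ for arbitrary $1\le m\le N$. The leading term $c_{0}(\mathcal A_{H}\delta_{t}v^{n},v^{n-\frac12})_{H}=c_{0}(\delta_{t}v^{n},v^{n-\frac12})_{\mathcal A_{H}}$ telescopes (using $a(a-b)\ge\frac12(a^{2}-b^{2})$ for the $n=1$ term) to $\frac{c_{0}}{2}(\mathcal E^{m}-\mathcal E^{0})$, where $c_{0}=1+\tfrac{w_{0}}2=1+\mathcal O(\tau)$ is positive and bounded for small $\tau$ since $|w_{0}|\le Q_{g}\tau$ by Lemma~\ref{gpro}. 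For the diffusion term I would pull the spatial factorization of Lemma~\ref{positivityofLambda} through the temporally linear operator $\mathfrak I_{\bar\alpha}^{n}$, so that $-(\mathfrak I_{\bar\alpha}^{n}\Lambda_{H}v^{n},\,\cdot\,)_{H}=(\mathfrak I_{\bar\alpha}^{n}\nabla_{H}^{*}v^{n},\,\nabla_{H}^{*}\,\cdot\,)_{H}$; applying Lemma~\ref{PIpositive} pointwise in space to the sequence $\nabla_{H}^{*}v^{n}$ and summing against the positive weight $H_{x}H_{y}$ shows the whole diffusion contribution is $\ge 0$ and may be discarded.

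It then remains to bound the source, the nonlinear term, and the memory term. Since $|g|\le Q_{g}$ gives $\|b^{n}\|_{H}\le Q_{g}\|\bar u_{0}\|_{H}$, Young's inequality yields $\tau\sum_{n}(\mathcal A_{H}b^{n},v^{n-\frac12})_{H}\le Q\tau\sum_{n}\mathcal E^{n}+Q\|\bar u_{0}\|_{H}^{2}$. For the nonlinear term I would use $|f'|\le Q_{f}$ together with $v^{0}=u_{0}$ to obtain $\|f(v^{k})\|_{H}\le Q_{f}\|v^{k}\|_{H}+Q_{f}\|u_{0}\|_{H}+\|f(u_{0})\|_{H}$, and the uniform weight bound $\frac{\lambda_{n,n}}2+\sum_{k}\widetilde\lambda_{n,k}=\sum_{j}\lambda_{n,j}\le Q_{\bar\alpha,T}$ from \eqref{lam3}; Young's inequality then produces a contribution $\le Q\tau\sum_{n}\mathcal E^{n}+Q(\|u_{0}\|_{H}^{2}+\|f(u_{0})\|_{H}^{2})$, using $\|\mathcal A_{H}\,\cdot\,\|_{H}\le\|\cdot\|_{H}$ from Lemma~\ref{space1}.

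The main obstacle is the memory term $\sum_{k=1}^{n-1}\tilde w_{n-k}\mathcal A_{H}\delta_{t}v^{k}$: its kernel has $\ell^{1}$-size $\sum_{k}|\tilde w_{n-k}|\le Q_{g}\tau(n-1)=\mathcal O(1)$, which is not small, and it carries a $\delta_{t}$ that naively forces a $\tau^{-1}$ blow-up. I would resolve this by summation by parts in $k$, rewriting $\tau\sum_{k=1}^{n-1}\tilde w_{n-k}\delta_{t}v^{k}=\tilde w_{1}v^{n-1}-\tilde w_{n-1}v^{0}+\sum_{k=1}^{n-2}(\tilde w_{n-k}-\tilde w_{n-k-1})v^{k}$, transferring the difference onto the weights. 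The quantitative input is $\sum_{j\ge2}|\tilde w_{j}-\tilde w_{j-1}|=\mathcal O(\tau)$: indeed $\tilde w_{j}-\tilde w_{j-1}=\tfrac12(w_{j}-w_{j-2})$ with $w_{j}=\int_{t_{j}}^{t_{j+1}}g'$, so $|w_{j}-w_{j-1}|\le\int_{t_{j-1}}^{t_{j}}\int_{t}^{t+\tau}|g''(\sigma)|\,d\sigma\,dt$, and the log-integrable bound $|g''(t)|\le Q_{g}(|\ln t|+1)$ from Lemma~\ref{gpro} gives $\sum_{j}|w_{j}-w_{j-1}|\le 2Q_{g}\tau\int_{0}^{T}(|\ln\sigma|+1)\,d\sigma=\mathcal O(\tau)$. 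Dividing by $\tau$ turns the transferred sum into a bounded-weight average, so after Cauchy--Schwarz and Young the memory contribution is likewise $\le Q\tau\sum_{n}\mathcal E^{n}+Q\|u_{0}\|_{H}^{2}$.

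Collecting all estimates and recalling $\mathcal E^{0}=\|u_{0}\|_{\mathcal A_{H}}^{2}\le\|u_{0}\|_{H}^{2}$, I arrive at $\frac{c_{0}}2\mathcal E^{m}\le Q\tau\sum_{n=1}^{m}\mathcal E^{n}+Q(\|u_{0}\|_{H}^{2}+\|f(u_{0})\|_{H}^{2}+\|\bar u_{0}\|_{H}^{2})$. For $\tau$ small enough the $n=m$ term is absorbed into the left-hand side, and the discrete Gr\"onwall inequality yields $\mathcal E^{m}\le Q(\|u_{0}\|_{H}^{2}+\|f(u_{0})\|_{H}^{2}+\|\bar u_{0}\|_{H}^{2})$ uniformly in $m$; the norm equivalence $\frac13\|v\|_{H}^{2}\le\|v\|_{\mathcal A_{H}}^{2}$ of Lemma~\ref{space1} then gives the claimed bound on $\|U_{H}^{n}\|_{H}$. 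Finally, since each step is an implicit nonlinear system, I would note that the existence of $U_{H}^{n}$ follows from Brouwer's fixed-point theorem on the ball determined by this a priori bound, which is self-consistent for $\tau$ sufficiently small.
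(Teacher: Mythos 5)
Your proposal is correct and follows essentially the same route as the paper's proof: testing with $U_H^{1}$ and $U_H^{n-\frac12}$, discarding the diffusion term via Lemmas~\ref{positivityofLambda} and \ref{PIpositive}, handling the memory term by summation by parts with the weight estimates $|w_0|,|\tilde w_1|,|\tilde w_{n-1}|\le Q_g\tau$ and $\sum_k|\tilde w_k-\tilde w_{k-1}|\le Q\tau$ from Lemma~\ref{gpro}, and closing with absorption for small $\tau$ and discrete Gr\"onwall. The only differences are cosmetic --- you run the argument on the squared energy $\|\cdot\|_{\mathcal A_H}^{2}$ with Young's inequality where the paper selects a maximizing index $\mathcal K$ to reduce to a first-power inequality, and your closing Brouwer fixed-point remark on solvability of the implicit nonlinear system is a sound addition the paper does not include.
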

\begin{proof}
When $n=1$, taking the inner product $(\cdot , \cdot)_{H}$ of equation 
\eqref{coarse-scheme-1} with $2\tau U_H^1$, we have
\begin{equation}\label{e4.1}
2c_{0}\tau\left(\mathcal{A}_{H}\delta_{t}U_{H}^{1},U_{H}^{1}\right)_{H}-2\lambda_{1,1}\tau\left(\Lambda_{H} U_{H}^{1},U_{H}^{1}\right)_{H}
=2\lambda_{1,1}\tau\left(\mathcal{A}_{H}f(U_{H}^{1}),U_{H}^{1}\right)_{H}+2\tau\left(\mathcal{A}_{H}b^{1},U_{H}^{1}\right)_{H}.
\end{equation}
We first get 
\begin{equation*}
2\tau\left(\mathcal{A}_{H}\delta_t U_H^1,U_H^1\right)_{H} \geq \|U_H^1\|^2_{\mathcal{A}_H}-\|U_H^0\|^2_{\mathcal{A}_H},
\end{equation*}
and 
\begin{equation*}
\begin{aligned}
2\lambda_{1,1}\tau\left(\mathcal{A}_{H}f(U_H^1),U_H^1\right)_{H}&\leq 2\tau Q_{\bar{\alpha},T}\|\mathcal{A}_{H}f(U_H^1)\|_H \|U_H^1\|_H \\
&\leq 2\tau Q_{\bar{\alpha},T}\left(\|f(U_H^1)-f(U_H^0)\|_H+\|f(U_H^0)\|_H \right)\|U_H^1\|_H\\
&\leq 2\tau Q_{\bar{\alpha},T}\left(Q_{f}\|U_H^1-U_H^0\|_H +\|f(U_H^0)\|_H \right)\|U_H^1\|_H,
\end{aligned}  
\end{equation*}
where we used the estimate of $\lambda_{1,1}$ in \eqref{lam2}. Then, \eqref{e4.1} 
turns into 
\begin{equation}\label{e4.2}
\begin{aligned}
&c_{0}\|U_H^1\|^2_{\mathcal{A}_H}-2\lambda_{1,1}\tau\left(\Lambda_{H} U_{H}^{1},U_{H}^{1}\right)_{H}\\
\le& c_{0}\|U_H^0\|^2_{\mathcal{A}_H}+2\tau\left[\|\mathcal{A}_Hb^1\|_H+Q_{\bar{\alpha},T}\left(Q_{f}\|U_H^1-U_H^0\|_H +\|f(U_H^0)\|_H \right)\right]\|U_H^1\|_H.
\end{aligned}  
\end{equation}
Next, taking the inner product $(\cdot , \cdot)_{H}$ of equation 
\eqref{coarse-scheme-2} with $2\tau U_H^{n-\frac{1}{2}}$ and summing up 
for $n$ from $2$ to $m$ for $2\le m\le N$, we get 
\begin{equation}\label{e4.3}
\begin{aligned}
&2c_{0}\tau\sum_{n=2}^{m}\left(\mathcal{A}_{H}\delta_{t}U_{H}^{n},U_{H}^{n-\frac{1}{2}}\right)_{H}-2\tau\sum_{n=2}^{m}\left(\mathfrak{I}_{\bar{\alpha}}^{n}\Lambda_{H} U_H^{n},U_{H}^{n-\frac{1}{2}}\right)_{H}\\
=&-2\tau\sum_{n=2}^{m}\sum_{k=1}^{n-1}\tilde{w}_{n-k}\left(\mathcal{A}_{H}\delta_{t}U_{H}^{k},U_{H}^{n-\frac{1}{2}}\right)_{H}+\tau\sum_{n=2}^{m}\lambda_{n,n}\left(\mathcal{A}_{H}f(U_{H}^{n}),U_{H}^{n-\frac{1}{2}}\right)_{H}\\
&+2\tau\sum_{n=2}^{m}\sum_{k=1}^{n-1}\widetilde{\lambda}_{n,k}\left(\mathcal{A}_{H}f(U_{H}^{k}),U_{H}^{n-\frac{1}{2}}\right)_{H}+2\tau\sum_{n=2}^{m}\left(\mathcal{A}_{H}b^{n},U_{H}^{n-\frac{1}{2}}\right)_{H}.
\end{aligned}
\end{equation}
In the following, we shall estimate each term in \eqref{e4.3}. To begin with, it is straightforward to observe that 
\begin{equation}\label{e4.4}
2\tau\sum_{n=2}^{m}\left(\mathcal{A}_{H}\delta_{t}U_{H}^{n},U_{H}^{n-\frac{1}{2}}\right)_{H} =\|U_{H}^{m}\|_{\mathcal A_H}^{2}-\|U_{H}^{1}\|_{\mathcal A_H}^{2}.   
\end{equation}
Then, for the second term on the left-hand side, taking the fact
\begin{equation*}
\begin{aligned}
\sum_{k=1}^{n-1}\tilde{w}_{n-k}\delta_{t}U_{H}^{k}&=\frac{1}{\tau}\left[\tilde{w}_{1}U_{H}^{n-1}+\sum_{k=1}^{n-2}(\tilde{w}_{n-k}-\tilde{w}_{n-k-1})U_{H}^{k}-\tilde{w}_{n-1}U_{H}^{0}\right] \\
&=\frac{1}{\tau}\left[\tilde{w}_{1}U_{H}^{n-1}+\sum_{k=2}^{n-1}(\tilde{w}_{k}-\tilde{w}_{k-1})U_{H}^{n-k}-\tilde{w}_{n-1}U_{H}^{0}\right] 
\end{aligned}  
\end{equation*}
into consideration, with the help of Cauchy-Schwarz inequality, we obtain
\begin{equation}\label{e4.5}
\begin{aligned}
&-2\tau\sum_{n=2}^m\sum_{k=1}^{n-1}\tilde{w}_{n-k}\left(\mathcal{A}_H\delta_t U_{H}^{k},U_{H}^{n-\frac{1}{2}}\right)_{H} \\
\leq&\sum_{n=2}^{m}|\tilde{w}_{1}|\left\|U_H^{n-1}\right\|_{\mathcal{A}_{H}}\left\|U_H^{n}\right\|_{\mathcal{A}_{H}}+\sum_{n=3}^{m}\sum_{k=2}^{n-1}|\tilde{w}_{k}-\tilde{w}_{k-1}|\left\|U_{H}^{n-k}\right\|_{\mathcal{A}_{H}}\left\|U_H^{n}\right\|_{\mathcal{A}_{H}} \\
&+\sum_{n=2}^{m}|\tilde{w}_{1}|\left\|U_H^{n-1}\right\|_{\mathcal{A}_{H}}^{2}+\sum_{n=3}^{m}\sum_{k=2}^{n-1}|\tilde{w}_{k}-\tilde{w}_{k-1}|\left\|U_{H}^{n-k}\right\|_{\mathcal{A}_{H}}\left\|U_H^{n-1}\right\|_{\mathcal{A}_{H}}\\
&+\sum_{n=2}^{m}|\tilde{w}_{n-1}|\left\|U_H^{0}\right\|_{\mathcal{A}_{H}}\left\|U_H^{n}\right\|_{\mathcal{A}_{H}}+\sum_{n=2}^{m}|\tilde{w}_{n-1}|\left\|U_H^{0}\right\|_{\mathcal{A}_{H}}\left\|U_H^{n-1}\right\|_{\mathcal{A}_{H}}.
\end{aligned}
\end{equation}
Meanwhile, the application of triangle inequality leads to 
\begin{equation}\label{e4.6}
\begin{aligned}
&\tau\sum_{n=2}^{m}\lambda_{n,n}\left(\mathcal{A}_{H}f(U_{H}^{n}),U_{H}^{n-\frac{1}{2}}\right)_{H}+2\tau\sum_{n=2}^{m}\sum_{k=1}^{n-1}\widetilde{\lambda}_{n,k}\left(\mathcal{A}_{H}f(U_{H}^{k}),U_{H}^{n-\frac{1}{2}}\right)_{H}\\
\le &\tau Q_{\bar{\alpha},T}\sum_{n=2}^{m}\left\|\mathcal{A}_Hf(U_H^n)\right\|_H\left\|U_H^{n-\frac{1}{2}}\right\|_H+2\tau\sum_{n=2}^{m}\sum_{k=1}^{n-1}\widetilde{\lambda}_{n,k}\left\|\mathcal{A}_{H}f(U_{H}^{k})\right\|_{H}\left\|U_{H}^{n-\frac{1}{2}}\right\|_{H}\\
\leq&\tau Q_{\bar{\alpha},T}\sum_{n=2}^m\left(Q_f\left\|U_H^n-U_H^0\right\|_H+\left\|f(U_H^0)\right\|_H\right)\left\|U_H^{n-\frac12}\right\|_H\\
&+\tau \sum_{n=2}^m\sum_{k=1}^{n-1}\left[\widetilde{\lambda}_{n,k}\left(Q_f\left\|U_H^{k}-U_H^0\right\|_H+\left\|f(U_H^0)\right\|_H\right)\right]\left(\left\|U_H^{n}\right\|_H+\left\|U_H^{n-1}\right\|_H\right).
\end{aligned}  
\end{equation}
Taking \eqref{e4.4}, \eqref{e4.5} and \eqref{e4.6} into \eqref{e4.3}, in combination with 
\eqref{e4.2}, we have 
\begin{equation}\label{e4.7}
\begin{aligned}
&c_{0}\|U_{H}^{m}\|_{\mathcal A_H}^{2}-2\lambda_{1,1}\tau\left(\Lambda_{H} U_{H}^{1},U_{H}^{1}\right)_{H}-2\tau\sum_{n=2}^{m}\left(\mathfrak{I}_{\bar{\alpha}}^{n}\Lambda_{H} U_H^{n},U_{H}^{n-\frac{1}{2}}\right)_{H}\\
\le &c_{0}\left\|U_H^0\right\|^2_{\mathcal{A}_H}+\sum_{n=3}^{m}\sum_{k=2}^{n-1}|\tilde{w}_{k}-\tilde{w}_{k-1}|\left\|U_{H}^{n-k}\right\|_{H}\left(\left\|U_H^{n}\right\|_{H}+\left\|U_H^{n-1}\right\|_{H}\right) \\
&+\sum_{n=2}^{m}|\tilde{w}_{1}|\left\|U_H^{n-1}\right\|_{H}\left(\left\|U_H^{n}\right\|_{H}+\left\|U_H^{n-1}\right\|_H\right)+2\tau\left\|\mathcal{A}_Hb^1\right\|_H\left\|U_H^1\right\|_H\\
&+\sum_{n=2}^{m}|\tilde{w}_{n-1}|\left\|U_H^{0}\right\|_{H}\left(\left\|U_H^{n}\right\|_{H}+\left\|U_H^{n-1}\right\|_{H}\right)+2\tau\sum_{n=2}^{m}\left\|\mathcal{A}_Hb^n\right\|_H\left\|U_{H}^{n-\frac{1}{2}}\right\|_{H}\\
&+2\tau Q_{\bar{\alpha},T}\left(Q_{f}\|U_H^1-U_H^0\|_H +\|f(U_H^0)\|_H \right)\|U_H^1\|_H\\
&+\tau Q_{\bar{\alpha},T}\sum_{n=2}^m\left(Q_f\left\|U_H^n-U_H^0\right\|_H+\left\|f(U_H^0)\right\|_H\right)\left\|U_H^{n-\frac12}\right\|_H\\
&+\tau \sum_{n=2}^m\sum_{k=1}^{n-1}\left[\widetilde{\lambda}_{n,k}\left(Q_f\left\|U_H^{k}\right\|+Q_{f}\left\|U_H^0\right\|_H+\left\|f(U_H^0)\right\|_H\right)\right]\left(\left\|U_H^{n}\right\|_H+\left\|U_H^{n-1}\right\|_H\right).
\end{aligned}  
\end{equation}

Next, we focus on simplifying this inequality. First, by applying Lemmas \ref{positivityofLambda} and \ref{PIpositive}, we obtain the fact that
\begin{equation}\label{positive}
 \begin{aligned}
&-\left(\mathfrak{I}_{\bar{\alpha}}^{1}\Lambda_{H}U_H^1,U_H^1\right)_H-\sum_{n=2}^{m}\left(\mathfrak{I}_{\bar{\alpha}}^{n}\Lambda_{H}U_H^n,U_H^{n-\frac{1}{2}}\right)_H \\
=&\left(\mathfrak{I}_{\bar{\alpha}}^{1}\nabla_H^*U_H^1,\nabla_H^*U_H^1\right)_H
+\sum_{n=2}^{m}\left(\mathfrak{I}_{\bar{\alpha}}^{n}\nabla_H^*U_{H}^{n},\nabla_H^*U_H^{n-\frac{1}{2}}\right)_H \ge 0.
\end{aligned} 
\end{equation}
Then, choosing a suitable $\mathcal{K}$ such that 
$\|U_{H}^{\mathcal{K}}\|_{H}=\max\limits_{1\leq n\leq N}\|U_{H}^{n}\|_{H}$, \eqref{e4.7}
can be transformed into
\begin{equation}\label{e4.9}
\begin{aligned}
\frac{c_{0}}{3}\|U_{H}^{\mathcal{K}}\|_{H}\le &c_{0}\left\|U_H^0\right\|_{H}+2\tau Q_{\bar{\alpha},T}\sum_{n=1}^{\mathcal{K}}\left(Q_f\left\|U_H^n-U_H^0\right\|_H+\left\|f(U_H^0)\right\|_H\right)\\
&+2\sum_{n=2}^{\mathcal{K}}|\tilde{w}_{1}|\left\|U_H^{n-1}\right\|_{H}+\sum_{n=3}^{\mathcal{K}}\sum_{k=2}^{n-1}|\tilde{w}_{k}-\tilde{w}_{k-1}|\left(\left\|U_{H}^{n-1}\right\|_{H}+\left\|U_H^{n}\right\|_{H}\right) \\
&+2\sum_{n=2}^{\mathcal{K}}|\tilde{w}_{n-1}|\left\|U_H^{0}\right\|_{H}+\tau Q_f\sum_{n=2}^{\mathcal{K}}\sum_{k=1}^{n-1}\widetilde{\lambda}_{n,k}\left(\left\|U_H^{n-1}\right\|_H+\left\|U_H^{n}\right\|_H\right)\\
&+2\tau\sum_{n=2}^{\mathcal{K}}\sum_{k=1}^{n-1}\widetilde{\lambda}_{n,k}\left(Q_f\left\|U_H^{0}\right\|_H+\left\|f(U_H^0)\right\|_H\right)+2\tau\sum_{n=1}^{\mathcal{K}}\left\|b^n\right\|_H.
\end{aligned}  
\end{equation}
If we want to continue simplifying \eqref{e4.9}, we must obtain  the estimates of 
coefficients in this formula. Here, Lemma \ref{gpro} and Taylor expansion formula 
lead to 
\begin{equation}\label{wpro1}
\begin{aligned}
\sum_{k=2}^{n-1}|\tilde{w}_{k}-\tilde{w}_{k-1}|& =\frac12\sum_{k=2}^{n-1}|w_{k}-w_{k-1}+w_{k-1}-w_{k-2}|\leq\sum_{k=1}^{n-1}|w_{k}-w_{k-1}|  \\
&\le \sum_{k=1}^{n-1}\left(\int_{t_k}^{t_{k+1}}(t_{k+1}-t)|g^{\prime\prime}(t)|dt+\int_{t_{k-1}}^{t_k}(t-t_{k-1})|g^{\prime\prime}(t)|dt\right)\\
&\le \tau\sum_{k=1}^{n-1}\int_{t_{k-1}}^{t_{k+1}}|g^{\prime\prime}(t)|dt
\le Q_g\tau\int_{0}^{t_n}t^{-\varepsilon}\le \tau Q_{g}Q_{T}.
\end{aligned}  
\end{equation}
Meanwhile, it is obvious that 
\begin{align}
&|w_0|\leq\int_0^\tau|g'(t)|dt\leq \tau Q_g,\label{wpro2}\\
&|\tilde{w}_1|=|\frac{w_1+w_0}{2}|\leq\frac{1}{2}\int_0^{2\tau}|g'(t)|dt\leq \tau Q_g,\label{wpro3}\\
&|\tilde{w}_{n-1}|=|\frac{w_{n-1}+w_{n-2}}{2}|\leq\frac{1}{2}\int_{t_{n-2}}^{t_{n}}|g'(t)|dt\leq \tau Q_g.\label{wpro4}
\end{align}
Based on \eqref{lam3}, \eqref{wpro1}, \eqref{wpro2}, \eqref{wpro3} and \eqref{wpro4}, we 
further get 
\begin{equation}\nonumber
\begin{aligned}
\left(1-\frac{Q_g}{2}\tau\right)\|U_{H}^{N}\|_{H}\le &\left(3+\tau\sum_{n=1}^{N}(6Q_g+12Q_fQ_{\bar{\alpha},T})\right)\left\|U_H^0\right\|_{H}+6\tau\sum_{n=1}^{N}\left\|b^n\right\|_H\\
&+12\tau Q_{\bar{\alpha},T}\sum_{n=1}^{N}\left\|f(U_H^0)\right\|_H+3\tau \left(Q_gQ_{T}+3Q_fQ_{\bar{\alpha},T}\right)\left\|U_H^{N}\right\|_{H}\\
&+6\tau \left(Q_g+Q_gQ_T+2Q_fQ_{\bar{\alpha},T}\right)\sum_{n=2}^{N}\left\|U_H^{n-1}\right\|_{H}.
\end{aligned}  
\end{equation}
Thus, when $\tau\le 1/(Q_g+6Q_gQ_T+18Q_fQ_{\bar{\alpha},T})$, the following estimate 
can be obtained
\begin{equation}\nonumber
\left\|U_{H}^{N}\right\|_{H}\le Q\tau\sum_{n=1}^{N}\left(\left\|U_H^0\right\|_{H}+\left\|f(U_H^0)\right\|_H+\left\|b^n\right\|_H\right)+Q\tau\sum_{n=1}^{N}\left\|U_H^{n-1}\right\|_{H}.
\end{equation}
At last, taking the initial condition \eqref{coarse-scheme-3} into consideration, 
in combination with the fact $|b^n_{i,j}|\le Q_{g}|\bar{u}_{0}(x_{i},y_{j})|$ for 
$(i,j)\in \omega_{H}$, we apply the discrete Gr\"onwall's inequality to derive 
\begin{equation}\nonumber
\left\|U_{H}^{N}\right\|_{H}\le Q\left(\left\|U_H^0\right\|_{H}+\left\|f(U_H^0)\right\|_H+\left\|\p_t U_H^0\right\|_H\right).
\end{equation}
The proof of this theorem is complete.
\end{proof}

\begin{theorem}
If the temporal stepsize $\tau$ is sufficiently small, the numerical solution 
of linear compact difference system \eqref{fine-scheme-1}-\eqref{fine-scheme-4} 
on the fine grid is stable, and there exists a positive constant 
$Q=Q(T,\bar{\alpha},Q_g,Q_f)$ such that
\begin{equation}\nonumber
\left\|U_{h}^{n}\right\|_{h}\le Q\left(\left\|u_0\right\|_{h}+\left\|f(u_0)\right\|_h+\left\|\bar{u}_0\right\|_h+\left\|u_0\right\|_{H}+\left\|f(u_0)\right\|_H+\left\|\bar{u}_0\right\|_H\right),\quad 1\le n\le N.
\end{equation}
\end{theorem}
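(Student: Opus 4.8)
The plan is to mirror the energy argument used for the coarse grid in Theorem \ref{staboncoarse}, since the fine-grid system \eqref{fine-scheme-1}--\eqref{fine-scheme-2} shares the same linear structure in the principal memory and diffusion terms; the only genuinely new feature is the Newton-linearized source $F^n$. First I would pair \eqref{fine-scheme-1} with $2\tau U_h^1$ and \eqref{fine-scheme-2} with $2\tau U_h^{n-\frac12}$, sum the latter over $2\le n\le m$, and add the two identities. The telescoping term $2c_0\tau\sum_n(\mathcal{A}_h\delta_t U_h^n, U_h^{n-\frac12})_h = c_0(\|U_h^m\|_{\mathcal{A}_h}^2 - \|U_h^1\|_{\mathcal{A}_h}^2)$ and the positivity $-\sum_n(\mathfrak{I}_{\bar\alpha}^n\Lambda_h U_h^n, U_h^{n-\frac12})_h\ge 0$ (from Lemmas \ref{positivityofLambda} and \ref{PIpositive}) carry over verbatim, as do the memory-weight bounds \eqref{wpro1}--\eqref{wpro4} for $\tilde{w}_k$. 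This reduces the task to estimating the right-hand contributions from $F^n$, the historical sum $\sum_{k<n}\widetilde{\lambda}_{n,k}\mathcal{A}_h f(U_h^k)$, and the data term $b^n$.

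The decisive step is the treatment of $F_{i,j}^n = f(\Pi_H U_H^n) + f'(\Pi_H U_H^n)(U_h^n - \Pi_H U_H^n)$. Using Lemma \ref{space1} to pass from $\|\mathcal{A}_h F^n\|_h$ to $\|F^n\|_h$ together with $|f'|\le Q_f$, I would split $F^n$ into the current-level fine-grid piece $f'(\Pi_H U_H^n)U_h^n$ and a remainder depending only on the coarse solution. For the remainder I would write $\|f(\Pi_H U_H^n)\|_h \le Q_f\|\Pi_H(U_H^n - U_H^0)\|_h + \|f(\Pi_H u_0)\|_h$ and $\|f'(\Pi_H U_H^n)\Pi_H U_H^n\|_h\le Q_f\|\Pi_H U_H^n\|_h$, then invoke Lemma \ref{inteesti2} to replace each $\|\Pi_H U_H^n\|_h$ by $48\|U_H^n\|_H$. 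The coarse-grid stability Theorem \ref{staboncoarse} then bounds every $\|U_H^n\|_H$ by $Q(\|u_0\|_H + \|f(u_0)\|_H + \|\bar u_0\|_H)$, so the entire remainder is controlled by exactly the coarse-grid data on the right-hand side of the asserted estimate.

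Since $F^n$ enters with coefficient $\lambda_{1,1}$ or $\lambda_{n,n}/2$, both $\le Q_{\bar\alpha,T}$ by \eqref{lam2}, the current-level term $f'(\Pi_H U_H^n)U_h^n$ produces, after pairing with $U_h^{n-\frac12}$ and summing, a contribution of the form $\tau Q_f Q_{\bar\alpha,T}\sum_n \|U_h^n\|_h(\|U_h^n\|_h+\|U_h^{n-1}\|_h)$; the historical sum is handled exactly as in \eqref{e4.6} using $\sum_k\widetilde{\lambda}_{n,k}\le Q_{\bar\alpha,T}$ from \eqref{lam3}. Choosing $\mathcal{K}$ with $\|U_h^{\mathcal{K}}\|_h = \max_{1\le n\le N}\|U_h^n\|_h$ and using $\|w\|_{\mathcal{A}_h}^2\ge\frac13\|w\|_h^2$ (Lemma \ref{space1}), the absorbable fine contribution is moved to the left once $\tau$ is small enough, precisely as the $\tau$-weighted terms were absorbed in the coarse proof. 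A final application of the discrete Gr\"onwall inequality, together with $|b^n_{i,j}|\le Q_g|\bar u_0(x_i,y_j)|$, then yields the stated bound.

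The hard part will be bookkeeping the coupling between the fine and coarse norms: the remainder of $F^n$ must be decomposed so that only the single current-level piece $f'(\Pi_H U_H^n)U_h^n$ remains for absorption, while every factor $\Pi_H U_H^n$ is routed through Lemma \ref{inteesti2} and then frozen by Theorem \ref{staboncoarse} into pure data. Verifying that the absorbable fine-grid term is genuinely smaller than $c_0/3$ after dividing by $\|U_h^{\mathcal{K}}\|_h$, and confirming that the interpolation coefficients of $\Pi_H U_H$ vanish on $\partial\omega_h$ as required by Lemma \ref{inteesti2} (which follows from \eqref{fine-scheme-4} and the boundary data of the coarse scheme), are the points demanding the most care.
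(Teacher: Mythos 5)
Your proposal is correct and follows essentially the same route as the paper's proof: the identical energy pairing with $2\tau U_h^1$ and $2\tau U_h^{n-\frac{1}{2}}$, the positivity from Lemmas \ref{positivityofLambda} and \ref{PIpositive}, the splitting of $F^n$ into the absorbable piece $f'(\Pi_H U_H^n)U_h^n$ plus a coarse-only remainder routed through Lemma \ref{inteesti2} and Theorem \ref{staboncoarse}, and the final absorption for small $\tau$ followed by Gr\"onwall with $|b^n_{i,j}|\le Q_g|\bar u_0(x_i,y_j)|$. The only cosmetic deviation is that you compare $f(\Pi_H U_H^n)$ with $f(\Pi_H u_0)$ rather than with $f(U_h^0)=f(u_0)$ as the paper does, which costs you one extra (harmless) application of $|f'|\le Q_f$ and Lemma \ref{inteesti2} to reduce $\|f(\Pi_H u_0)\|_h$ to the data norms in the stated bound.
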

\begin{proof}
First, when $n=1$, taking the inner product $(\cdot , \cdot)_{h}$ of equation 
\eqref{fine-scheme-1} with $2\tau U_h^1$, we have
\begin{equation}\nonumber
2c_{0}\tau\left(\mathcal{A}_{h}\delta_{t}U_{h}^{1},U_{h}^{1}\right)_{h}-2\tau\left(\mathfrak{I}_{\bar{\alpha}}^1\Lambda_{h} U_{h}^{1},U_{h}^{1}\right)_{h}=2\lambda_{1,1}\tau\left(\mathcal{A}_{h}F^{1},U_{h}^{1}\right)_{h}+2\tau\left(\mathcal{A}_{h}b^{1},U_{h}^{1}\right)_{h},
\end{equation}
which, together with Cauchy-Schwarz inequality, implies 
\begin{equation}\label{e4.14}
\begin{aligned}
c_{0}\left\|U_h^1\right\|^2_{\mathcal{A}_h}-2\tau\left(\mathfrak{I}_{\bar{\alpha}}^1\Lambda_{h} U_{h}^{1},U_{h}^{1}\right)_{h}
\le c_{0}\left\|U_h^0\right\|^2_{\mathcal{A}_h}+2\tau\left(\left\|\mathcal{A}_{h}b^1\right\|_h+Q_{\bar{\alpha},T}\left\|\mathcal{A}_{h}F^1\right\|_h\right)\left\|U_h^1\right\|_h.
\end{aligned}  
\end{equation}
Then, taking the inner product $(\cdot , \cdot)_{h}$ of equation 
\eqref{fine-scheme-2} with $2\tau U_h^{n-\frac{1}{2}}$ and summing up 
for $n$ from $2$ to $m$ for $2\le m\le N$, we get 
\begin{equation}\label{e4.15}
\begin{aligned}
&2c_{0}\tau\sum_{n=2}^{m}\left(\mathcal{A}_{h}\delta_{t}U_{h}^{n},U_{h}^{n-\frac{1}{2}}\right)_{h}-2\tau\sum_{n=2}^{m}\left(\mathfrak{I}_{\bar{\alpha}}^{n}\Lambda_{h} U_h^{n},U_{h}^{n-\frac{1}{2}}\right)_{h}\\
=&-2\tau\sum_{n=2}^{m}\sum_{k=1}^{n-1}\tilde{w}_{n-k}\left(\mathcal{A}_{h}\delta_{t}U_{h}^{k},U_{h}^{n-\frac{1}{2}}\right)_{h}+\tau\sum_{n=2}^{m}\lambda_{n,n}\left(\mathcal{A}_{h}F^{n},U_{h}^{n-\frac{1}{2}}\right)_{h}\\
&+2\tau\sum_{n=2}^{m}\sum_{k=1}^{n-1}\widetilde{\lambda}_{n,k}\left(\mathcal{A}_{h}f(U_{h}^{k}),U_{h}^{n-\frac{1}{2}}\right)_{h}+2\tau\sum_{n=2}^{m}\left(\mathcal{A}_{h}b^{n},U_{h}^{n-\frac{1}{2}}\right)_{h}.
\end{aligned}
\end{equation}
Similar to the derivation process of \eqref{e4.4} and \eqref{e4.5}, in combination with 
Cauchy-Schwarz inequality, it is easy to transform \eqref{e4.15} into 
\begin{equation}\label{e4.16}
\begin{aligned}
&c_{0}\|U_{h}^{m}\|_{\mathcal A_h}^{2}-2\tau\sum_{n=2}^{m}\left(\mathfrak{I}_{\bar{\alpha}}^{n}\Lambda_{h} U_h^{n},U_{h}^{n-\frac{1}{2}}\right)_{h}\\
\le &c_{0}\left\|U_h^1\right\|^2_{\mathcal{A}_h}+2\tau\sum_{n=2}^{m}\left\|\mathcal{A}_{h}b^n\right\|_h\left\|U_{h}^{n-\frac{1}{2}}\right\|_{h}+2\tau Q_{\bar{\alpha},T}\sum_{n=2}^{m}\left\|\mathcal{A}_{h}F^n\right\|_h\left\|U_{h}^{n-\frac{1}{2}}\right\|_{h}\\
&+\sum_{n=2}^{m}|\tilde{w}_{1}|\left\|U_h^{n-1}\right\|_{h}\left(\left\|U_h^{n}\right\|_{h}+\left\|U_h^{n-1}\right\|_{h}\right)+\sum_{n=2}^{m}|\tilde{w}_{n-1}|\left\|U_h^{0}\right\|_{h}\left(\left\|U_h^{n}\right\|_{h}+\left\|U_h^{n-1}\right\|_{h}\right)\\
&+\sum_{n=3}^{m}\sum_{k=2}^{n-1}|\tilde{w}_{k}-\tilde{w}_{k-1}|\left\|U_{h}^{n-k}\right\|_{h}\left(\left\|U_h^{n-1}\right\|_{h}+\left\|U_h^{n}\right\|_{h}\right)\\
&+\tau \sum_{n=2}^m\sum_{k=1}^{n-1}\left[\widetilde{\lambda}_{n,k}\left(Q_f\left\|U_h^{k}\right\|+Q_f\left\|U_h^0\right\|_h+\left\|f(U_h^0)\right\|_h\right)\right]\left(\left\|U_h^{n-1}\right\|_h+\left\|U_h^{n}\right\|_h\right).
\end{aligned}  
\end{equation}
in which, we utilized the inequality
\begin{equation}\nonumber
\begin{aligned}
&2\tau\sum_{n=2}^{m}\sum_{k=1}^{n-1}\widetilde{\lambda}_{n,k}\left(\mathcal{A}_{h}f(U_{h}^{k}),U_{h}^{n-\frac{1}{2}}\right)_{h}\\
\le &2\tau\sum_{n=2}^{m}\sum_{k=1}^{n-1}\widetilde{\lambda}_{n,k}\left\|\mathcal{A}_{h}f(U_{h}^{k})\right\|_{h}\left\|U_{h}^{n-\frac{1}{2}}\right\|_{h}\\
\leq&\tau \sum_{n=2}^m\sum_{k=1}^{n-1}\left[\widetilde{\lambda}_{n,k}\left(Q_f\left\|U_h^{k}-U_h^0\right\|_h+\left\|f(U_h^0)\right\|_h\right)\right]\left(\left\|U_h^{n-1}\right\|_h+\left\|U_h^{n}\right\|_h\right).
\end{aligned}  
\end{equation}
Next, from a similar process of \eqref{positive}, putting \eqref{e4.14} and 
\eqref{e4.16} together leads to 
\begin{equation}\nonumber
\begin{aligned}
&c_{0}\|U_{h}^{m}\|_{\mathcal A_h}^{2}-2\tau\left(\left\|\mathcal{A}_{h}b^1\right\|_h+Q_{\bar{\alpha},T}\left\|\mathcal{A}_{h}F^1\right\|_h\right)\left\|U_h^1\right\|_h\\
\le &c_{0}\left\|U_h^0\right\|^2_{\mathcal{A}_h}+2\tau\sum_{n=2}^{m}\left\|\mathcal{A}_{h}b^n\right\|_h\left\|U_{h}^{n-\frac{1}{2}}\right\|_{h}+2\tau Q_{\bar{\alpha},T}\sum_{n=2}^{m}\left\|\mathcal{A}_{h}F^n\right\|_h\left\|U_{h}^{n-\frac{1}{2}}\right\|_{h}\\
&+\sum_{n=2}^{m}|\tilde{w}_{1}|\left\|U_h^{n-1}\right\|_{h}\left(\left\|U_h^{n}\right\|_{h}+\left\|U_h^{n-1}\right\|_{h}\right)+\sum_{n=2}^{m}|\tilde{w}_{n-1}|\left\|U_h^{0}\right\|_{h}\left(\left\|U_h^{n}\right\|_{h}+\left\|U_h^{n-1}\right\|_{h}\right)\\
&+\sum_{n=3}^{m}\sum_{k=2}^{n-1}|\tilde{w}_{k}-\tilde{w}_{k-1}|\left\|U_{h}^{n-k}\right\|_{h}\left(\left\|U_h^{n-1}\right\|_{h}+\left\|U_h^{n}\right\|_{h}\right)\\
&+\tau \sum_{n=2}^m\sum_{k=1}^{n-1}\left[\widetilde{\lambda}_{n,k}\left(Q_f\left\|U_h^{k}\right\|+Q_f\left\|U_h^0\right\|_h+\left\|f(U_h^0)\right\|_h\right)\right]\left(\left\|U_h^{n-1}\right\|_h+\left\|U_h^{n}\right\|_h\right).
\end{aligned}  
\end{equation}
Therefore, choosing a suitable $\mathcal{J}$ such that 
$\|U_{h}^{\mathcal{J}}\|_{h}=\max\limits_{1\leq n\leq N}\|U_{h}^{n}\|_{h}$, 
we further obtain
\begin{equation}\label{e4.17}
\begin{aligned}
\frac{c_{0}}{3}\|U_{h}^{\mathcal{J}}\|_{h}
\le &c_{0}\left\|U_h^0\right\|_{h}+2\sum_{n=2}^{\mathcal{J}}|\tilde{w}_{1}|\left\|U_h^{n-1}\right\|_{h}+\sum_{n=3}^{\mathcal{J}}\sum_{k=2}^{n-1}|\tilde{w}_{k}-\tilde{w}_{k-1}|\left(\left\|U_h^{n-1}\right\|_{h}+\left\|U_h^{n}\right\|_{h}\right)\\
&+2\sum_{n=2}^{\mathcal{J}}|\tilde{w}_{n-1}|\left\|U_h^{0}\right\|_{h}+\tau Q_f\sum_{n=2}^{\mathcal{J}}\sum_{k=1}^{n-1}\widetilde{\lambda}_{n,k}\left(\left\|U_h^{n-1}\right\|_h+\left\|U_h^{n}\right\|_h\right)\\
&+2\tau\sum_{n=2}^{\mathcal{J}}\sum_{k=1}^{n-1}\widetilde{\lambda}_{n,k}\left(Q_f\left\|U_h^0\right\|_h+\left\|f(U_h^0)\right\|_h\right)+2\tau Q_{\bar{\alpha},T}\sum_{n=1}^{\mathcal{J}}\left\|F^n\right\|_h+2\tau\sum_{n=1}^{\mathcal{J}}\left\|b^n\right\|_h.
\end{aligned}  
\end{equation}
According to the globally boundedness condition of $f$, it is clear that 
\begin{equation}\label{e4.18}
\begin{aligned}
\tau\sum_{n=1}^{\mathcal{J}}\left\|F^n\right\|_h\le&\tau\sum_{n=1}^{\mathcal{J}}\left(\left\|f(\Pi_{H}U_{H}^{n})\right\|_{h}+\left\|f^{\prime}(\Pi_{H}U_{H}^{n})U_{h}^{n}\right\|_{h}+\left\|f^{\prime}(\Pi_{H}U_{H}^{n})\Pi_{H}U_{H}^{n}\right\|_{h}\right)\\
\le &\tau\sum_{n=1}^{\mathcal{J}}\left(\|f(\Pi_HU_H^n)\|_h+Q_f\|U_h^n\|_h+48Q_f\|U_H^n\|_H\right).
\end{aligned}
\end{equation}
Substituting \eqref{lam3}, \eqref{wpro1}-\eqref{wpro4} and \eqref{e4.18} into \eqref{e4.17}, 
we have 
\begin{equation}\nonumber
\begin{aligned}
\left(1-\frac{Q_g}{2}\tau\right)\|U_{h}^{N}\|_{h}
\le &\left(3+6\tau\sum_{n=1}^{N}\left(Q_g+Q_fQ_{\bar{\alpha},T}\right)\right)\left\|U_h^0\right\|_{h}+3\tau\left(Q_gQ_{T}+3Q_{f}Q_{\bar{\alpha},T}\right)\left\|U_{h}^{N}\right\|_{h}\\
&+6\tau\left(Q_g+Q_gQ_T+2Q_fQ_{\bar{\alpha},T}\right)\sum_{n=2}^N\left\|U_h^{n-1}\right\|_h\\
&+6\tau\sum_{n=1}^{N}\left(Q_{\bar{\alpha},T}\|f(\Pi_HU_H^n)\|_h+Q_{\bar{\alpha},T}\left\|f(U_h^0)\right\|_h+48Q_fQ_{\bar{\alpha},T}\|U_H^n\|_H+\left\|b^n\right\|_h\right).
\end{aligned}  
\end{equation}
Thus, when $\tau\le 1/(Q_{g}+6Q_gQ_T+18Q_fQ_{\bar{\alpha},T})$, the following estimate 
can be obtained by Grönwall's inequality
\begin{equation}\nonumber
\left\|U_{h}^{N}\right\|_{h}\le Q\tau\sum_{n=1}^{N}\left(\left\|U_h^0\right\|_{h}+\left\|f(U_h^0)\right\|_h+\|f(\Pi_HU_H^n)\|_h+\|U_H^n\|_H+\left\|b^n\right\|_h\right),
\end{equation}
which, together with the initial condition \eqref{fine-scheme-3} and the fact 
$|b^n_{i,j}|\le Q_g|\bar{u}_{0}(x_{i},y_{j})|$ for $(i,j)\in \omega_{h}$, implies
\begin{equation}\label{stabonfine}
  \left\|U_{h}^{N}\right\|_{h}\le Q\left[\left\|U_h^0\right\|_{h}+\left\|f(U_h^0)\right\|_h+\left\|\p_tU_h^{0}\right\|_h+\tau\sum_{n=1}^{N}\left(\|f(\Pi_HU_H^n)\|_h+\|U_H^n\|_H\right)\right].
\end{equation}
From Theorem \ref{staboncoarse}, it holds that
\begin{equation}\nonumber
\left\|U_{H}^{n}\right\|_{H}\le Q\left(\left\|U_H^0\right\|_{H}+\left\|f(U_H^0)\right\|_H+\left\|\p_t U_H^0\right\|_H\right), 
\end{equation}
and that
\begin{align*}
\left\|f(\Pi_HU_H^n)\right\|_h&=\left\|f(\Pi_HU_H^n)-f(U_h^0)+f(U_h^0)\right\|_h\\
&\le Q_f\left\|\Pi_HU_H^n-U_h^0\right\|_h+\left\|f(U_h^0)\right\|_h\\
&\le 48Q_f\left\|U_{H}^{n}\right\|_H + Q_f\left\|U_h^0\right\|_h+ \left\|f(U_h^0)\right\|_h\\
&\le Q\left(\left\|U_H^0\right\|_{H}+\left\|f(U_H^0)\right\|_H+\left\|\p_t U_H^0\right\|_H
+\left\|U_h^0\right\|_h+ \left\|f(U_h^0)\right\|_h\right).
\end{align*}
We combine the above two estimates with \eqref{stabonfine} to complete the proof.
\end{proof}

\subsection{Convergence}
Here, we first establish the convergence results of the STG compact difference scheme on the coarse grid using the energy method.
\begin{theorem}
Let $u_{i,j}^n$ and $[U_H]_{i,j}^{n}$ be the solutions of \eqref{e3.12} and \eqref{coarse-scheme-1}-\eqref{coarse-scheme-4} on the coarse grid, respectively. Under the conditions in Remark \ref{regularity}, if the 
step sizes $\tau$, $H_x$ and $H_y$ are sufficiently small, we have 
\begin{equation*}
\left\|u^{n}-U_{H}^{n}\right\|_{H}\le Q\left(\tau^{2}+H_x^4+H_y^4\right),\qquad \text{for}~1\le n\le N.
\end{equation*}
Here, the constant $Q$ is some constant depending on $T,\bar{\alpha},Q_{g},Q_f$, $Q_L$, and $Q_L=\sqrt{(r_x-l_x)(r_y-l_y)}$.
\end{theorem}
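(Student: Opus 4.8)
The plan is to set up an error equation and then reproduce, essentially verbatim, the energy machinery developed in the stability proof of Theorem~\ref{staboncoarse}. Writing $e^n = u^n - U_H^n$ for the coarse-grid error and subtracting \eqref{coarse-scheme-1}--\eqref{coarse-scheme-4} from the consistent identity \eqref{e3.12} restricted to the coarse grid, one obtains a system with exactly the same linear left-hand side, whose right-hand side is $\mathfrak{I}_{\bar{\alpha}}^{n}[\mathcal{A}_H(f(u^n)-f(U_H^n))] + (R)^n$; the data terms $[\mathcal{A}_H b]^n$ cancel because both equations share identical $b^n$. Since $u$ and $U_H$ start from the same initial data, $e^0 = 0$, so no initial-data contribution survives on the right, which matches the clean form $Q(\tau^2+H_x^4+H_y^4)$ of the claimed bound. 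By the mean-value theorem and $|f'|\le Q_f$, the nonlinear difference obeys $\|f(u^k)-f(U_H^k)\|_H \le Q_f\|e^k\|_H$, so the nonlinear term inherits the same PI-weighted memory structure as the $f(U_H)$-terms in \eqref{e4.6}, but now with a clean Lipschitz bound in place of $f(U_H^0)$.

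First I would take the discrete inner product of the $n=1$ error equation with $2\tau e^1$ and of the $n\ge 2$ error equations with $2\tau e^{n-\frac12}$, summing from $n=2$ to $m$. As in \eqref{e4.4}, the temporal term telescopes to $\|e^m\|_{\mathcal{A}_H}^2$ (using $e^0=0$), while the diffusion contribution is nonnegative by Lemmas~\ref{positivityofLambda} and~\ref{PIpositive}, exactly as in \eqref{positive}, and is therefore discarded from the left-hand side. The history term $\sum_{k=1}^{n-1}\tilde{w}_{n-k}[\mathcal{A}_H\delta_t e]^k$ is recast through the Abel-summation identity preceding \eqref{e4.5}, converting the $\delta_t e$-history into an $e$-history controlled by the weight estimates \eqref{wpro1}--\eqref{wpro4}; the nonlinear memory term is bounded by $Q_f\sum_{k}\widetilde{\lambda}_{n,k}\|e^k\|_H$ with $\sum_k\widetilde{\lambda}_{n,k}\le Q_{\bar{\alpha},T}$ from \eqref{lam3}.

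The truncation term is treated via $2\tau\sum_n(R^n,e^{n-\frac12})_H \le 2\tau\sum_n\|(R)^n\|_H\|e^{n-\frac12}\|_H$, where the passage from the maximum-norm estimate of Lemma~\ref{errorestimate} to the discrete $L^2$-norm uses $\|(R)^n\|_H \le Q_L\|(R)^n\|_{H,\infty}$ with $Q_L=\sqrt{(r_x-l_x)(r_y-l_y)}$; this is precisely the step that introduces $Q_L$ into the final constant and yields $\tau\sum_n\|(R)^n\|_H\le Q_L\,Q(\tau^2+H_x^4+H_y^4)$. Choosing $\mathcal{K}$ with $\|e^{\mathcal{K}}\|_H=\max_{1\le n\le N}\|e^n\|_H$, invoking the equivalence $\|e^{\mathcal{K}}\|_{\mathcal{A}_H}^2\ge \tfrac13\|e^{\mathcal{K}}\|_H^2$ of Lemma~\ref{space1}, bounding each $\|e^n\|_H$ on the right by $\|e^{\mathcal{K}}\|_H$, and dividing by $\|e^{\mathcal{K}}\|_H$ reduces the inequality, once $\tau$ is small enough to absorb the resulting $\|e^{\mathcal{K}}\|_H$ coefficient, to the Gr\"onwall-ready form $\frac{c_0}{3}\|e^{\mathcal{K}}\|_H \le Q\tau\sum_{n}\|e^{n-1}\|_H + Q(\tau^2+H_x^4+H_y^4)$. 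A discrete Gr\"onwall inequality then gives the claimed estimate uniformly in $1\le n\le N$.

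I expect the main obstacle to be the simultaneous and consistent handling of the two distinct memory summations: the Abel-resummed $\tilde{w}$-history of $\delta_t e$ and the PI-weighted $\widetilde{\lambda}$-history of the nonlinear term. These must be aligned with the telescoped temporal term and with the nonnegative diffusion term (whose positivity must genuinely be exploited so those terms are dropped rather than estimated), so that the surviving right-hand side depends only on the bounded weight sums and on the history $\|e^{n-1}\|_H$ in a form amenable to Gr\"onwall. Relative to the stability proof, the one genuinely new bookkeeping ingredient is keeping the truncation error in the correct norm, which is what forces the dependence of the constant on $Q_L$.
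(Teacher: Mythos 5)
Your proposal is correct and follows essentially the same route as the paper's own proof: the same error equations obtained by subtracting \eqref{coarse-scheme-1}--\eqref{coarse-scheme-2} from \eqref{e3.12}, the same test functions $2\tau e_H^1$ and $2\tau e_H^{n-\frac{1}{2}}$, the positivity argument of \eqref{positive} to drop the diffusion terms, the Abel-resummed $\tilde{w}$-history with the bounds \eqref{wpro1}--\eqref{wpro4}, the Lipschitz bound $Q_f\|e_H^k\|_H$ with \eqref{lam3}, the maximizing index and Lemma~\ref{space1}, the passage $\|R^n\|_H\le Q_L\|R^n\|_{H,\infty}$ feeding Lemma~\ref{errorestimate}, and a concluding discrete Gr\"onwall step. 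You even correctly identify the role of $Q_L$ as the one genuinely new ingredient relative to Theorem~\ref{staboncoarse}, so nothing needs to be changed.
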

\begin{proof}
Let $[e_H]_{i,j}^{n}=u_{i,j}^{n}-[U_H]_{i,j}^{n}$ for $(i,j)\in\bar{\omega}_H$ 
and $0\le n\le N$. Subtracting \eqref{coarse-scheme-1} and \eqref{coarse-scheme-2} 
from \eqref{e3.12} on the coarse grid, we have 
\begin{equation}\label{conver-1}
c_{0}[\mathcal{A}_{H}\delta_{t}e_{H}]_{i,j}^{1}-\lambda_{1,1}[\Lambda_{H} e_{H}]_{i,j}^{1}=\lambda_{1,1}\mathcal{A}_{H}\left(f(u_{i,j}^{1})-f([U_{H}]_{i,j}^{1})\right)+(R)_{i,j}^{1},
\end{equation}
and for $2\le n\le N$
\begin{equation}\label{conver-2}
\begin{aligned}
&c_{0}[\mathcal{A}_{H}\delta_{t}e_{H}]_{i,j}^{n}+\sum_{k=1}^{n-1}\tilde{w}_{n-k}[\mathcal{A}_{H}\delta_{t}e_{H}]_{i,j}^{k}-\mathfrak{I}_{\bar{\alpha}}^{n}[\Lambda_{H} e_H]_{i,j}^{n}\\
=&\frac{\lambda_{n,n}\mathcal{A}_{H}\left(f(u_{i,j}^{n})-f([U_{H}]_{i,j}^{n})\right)}{2}+\sum_{k=1}^{n-1}\widetilde{\lambda}_{n,k}\mathcal{A}_{H}\left(f(u_{i,j}^{k})-f([U_{H}]_{i,j}^{k})\right)+(R)_{i,j}^{n},
\end{aligned}
\end{equation}
with $(i,j)\in\omega_{H}$. Besides, it holds that $[e_{H}]_{i,j}^0=0$ for $(i,j)\in\omega_{H}$.
Then, taking the inner product $(\cdot , \cdot)_{H}$ of equation 
\eqref{conver-1} with $2\tau e_H^1$, we have
\begin{equation}\label{e4.21}
\begin{aligned}
2c_{0}\tau\left(\mathcal{A}_{H}\delta_{t}e_{H}^{1},e_{H}^{1}\right)_{H}-2\lambda_{1,1}\tau\left(\Lambda_{H} e_{H}^{1},e_{H}^{1}\right)_{H}
=2\lambda_{1,1}\tau\left(\mathcal{A}_{H}\left(f(u^{1})-f(U_{H}^{1})\right),e_{H}^{1}\right)_{H}+2\tau\left(R^1,e_{H}^{1}\right)_{H}.
\end{aligned}
\end{equation}
It is obvious that 
\begin{equation*}
2\tau\left(\mathcal{A}_{H}\delta_t e_H^1,e_H^1\right)_{H} \geq \|e_H^1\|^2_{\mathcal{A}_H}-\|e_H^0\|^2_{\mathcal{A}_H},
\end{equation*}
and 
\begin{equation*}
\left(\mathcal{A}_H\left(f(u^{1})-f(U_{H}^{1})\right),e_H^1\right)_{H}\le \left\|f(u^{1})-f(U_{H}^{1})\right\|_H\left\|e_H^1\right\|_H\le Q_{f}\left\|e_H^1\right\|_H^2.
\end{equation*}
Then, \eqref{e4.21} turns into 
\begin{equation}\label{e4.22}
c_{0}\|e_H^1\|^2_{\mathcal{A}_H}-2\tau\left(\mathfrak{I}_{\bar{\alpha}}^1\Lambda_{H} e_{H}^{1},e_{H}^{1}\right)_{H}\le 2\tau Q_{\bar{\alpha},T}Q_f\left\|e_H^1\right\|_H^2+2\tau\left\|R^1\right\|_H\left\|e_H^1\right\|_H.
\end{equation}
Next, taking the inner product $(\cdot,\cdot)_{H}$ of equation \eqref{conver-2} 
with $2\tau e_H^{n-\frac{1}{2}}$ and summing up 
for $n$ from $2$ to $m$ for $2\le m\le N$, we get 
\begin{equation}\label{e4.23}
\begin{aligned}
&2c_{0}\tau\sum_{n=2}^{m}\left(\mathcal{A}_{H}\delta_{t}e_{H}^{n},e_{H}^{n-\frac{1}{2}}\right)_{H}-2\tau\sum_{n=2}^{m}\left(\mathfrak{I}_{\bar{\alpha}}^{n}\Lambda_{H} e_H^{n},e_{H}^{n-\frac{1}{2}}\right)_{H}\\
=&-2\tau\sum_{n=2}^{m}\sum_{k=1}^{n-1}\tilde{w}_{n-k}\left(\mathcal{A}_{H}\delta_{t}e_{H}^{k},e_{H}^{n-\frac{1}{2}}\right)_{H}+2\tau\sum_{n=2}^{m}\left(R^n,e_{H}^{n-\frac{1}{2}}\right)_{H}\\
&+\tau\sum_{n=2}^{m}\lambda_{n,n}\left(\mathcal{A}_{H}\left(f(u^{n})-f(U_{H}^{n})\right),e_{H}^{n-\frac{1}{2}}\right)_{H}\\
&+2\tau\sum_{n=2}^{m}\sum_{k=1}^{n-1}\widetilde{\lambda}_{n,k}\left(\mathcal{A}_{H}\left(f(u^{k})-f(U_{H}^{k})\right),e_{H}^{n-\frac{1}{2}}\right)_{H}.
\end{aligned}
\end{equation}
Following a procedure similar to the derivation of \eqref{e4.4} and \eqref{e4.5}, and applying the Cauchy–Schwarz inequality, we transform \eqref{e4.23} into
\begin{equation}\label{e4.24}
\begin{aligned}
&c_{0}\|e_{H}^{m}\|_{\mathcal A_H}^{2}-c_{0}\left\|e_H^1\right\|^2_{\mathcal{A}_H}-2\tau\sum_{n=2}^{m}\left(\mathfrak{I}_{\bar{\alpha}}^{n}\Lambda_{H} e_H^{n},e_{H}^{n-\frac{1}{2}}\right)_{H}\\
\le &2\tau\sum_{n=2}^{m}\left\|R^n\right\|_H\left\|e_{H}^{n-\frac{1}{2}}\right\|_{H}+\sum_{n=2}^{m}|\tilde{w}_{1}|\left\|e_H^{n-1}\right\|_{H}\left(\left\|e_H^{n}\right\|_{H}+\left\|e_H^{n-1}\right\|_{H}\right)\\
&+\sum_{n=3}^{m}\sum_{k=2}^{n-1}|\tilde{w}_{k}-\tilde{w}_{k-1}|\left\|e_{H}^{n-k}\right\|_{H}\left(\left\|e_H^{n-1}\right\|_{H}+\left\|e_H^{n}\right\|_{H}\right)\\
&+\tau Q_f\sum_{n=2}^{m}\lambda_{n,n}\left\|e_H^{n}\right\|_{H}\left\|e_H^{n-\frac{1}{2}}\right\|_{H}+2\tau Q_f\sum_{n=2}^{m}\sum_{k=1}^{n-1}\widetilde{\lambda}_{n,k}\left\|e_H^{k}\right\|_{H}\left\|e_H^{n-\frac{1}{2}}\right\|_{H},
\end{aligned}  
\end{equation}
in which, we used the fact 
\begin{equation}\nonumber
\begin{aligned}
\left(\mathcal{A}_{H}\left(f(u^{k})-f(U_{H}^{k})\right),e_{H}^{n-\frac{1}{2}}\right)_{H}\le Q_f\left\|e_H^{k}\right\|_{H}\left\|e_H^{n-\frac{1}{2}}\right\|_{H}\quad \text{for}~1\le k\le m.
\end{aligned}  
\end{equation}
Next, performing a derivation similar to \eqref{positive}, putting \eqref{e4.22} and 
\eqref{e4.24} together leads to 
\begin{equation}\nonumber
\begin{aligned}
&c_{0}\|e_{H}^{m}\|_{\mathcal A_H}^{2}-2\tau Q_{\bar{\alpha},T}Q_f\left\|e_H^1\right\|_H^2-2\tau\left\|R^1\right\|_H\left\|e_H^1\right\|_H\\
\le &2\tau\sum_{n=2}^{m}\left\|R^n\right\|_H\left\|e_{H}^{n-\frac{1}{2}}\right\|_{H}+\sum_{n=2}^{m}|\tilde{w}_{1}|\left\|e_H^{n-1}\right\|_{H}\left(\left\|e_H^{n}\right\|_{H}+\left\|e_H^{n-1}\right\|_{H}\right)\\
&+\sum_{n=3}^{m}\sum_{k=2}^{n-1}|\tilde{w}_{k}-\tilde{w}_{k-1}|\left\|e_{H}^{n-k}\right\|_{H}\left(\left\|e_H^{n-1}\right\|_{H}+\left\|e_H^{n}\right\|_{H}\right)\\
&+\tau Q_f\sum_{n=2}^{m}\lambda_{n,n}\left\|e_H^{n}\right\|_{H}\left\|e_H^{n-\frac{1}{2}}\right\|_{H}+2\tau Q_f\sum_{n=2}^{m}\sum_{k=1}^{n-1}\widetilde{\lambda}_{n,k}\left\|e_H^{k}\right\|_{H}\left\|e_H^{n-\frac{1}{2}}\right\|_{H}.
\end{aligned}  
\end{equation}
Therefore, choosing a suitable $\mathcal{S}$ such that 
$\|e_{H}^{\mathcal{S}}\|_{H}=\max\limits_{1\leq n\leq N}\|e_{H}^{n}\|_{H}$, 
we further obtain
\begin{equation}\nonumber
\begin{aligned}
\frac{c_{0}}{3}\|e_{H}^{\mathcal{S}}\|_{H}\le &2\sum_{n=2}^{\mathcal{S}}|\tilde{w}_{1}|\left\|e_H^{n-1}\right\|_{H}+\sum_{n=3}^{\mathcal{S}}\sum_{k=2}^{n-1}|\tilde{w}_{k}-\tilde{w}_{k-1}|\left(\left\|e_H^{n-1}\right\|_{H}+\left\|e_H^{n}\right\|_{H}\right)\\
&+2\tau\sum_{n=1}^{\mathcal{S}}\left\|R^n\right\|_H+2\tau Q_f Q_{\bar{\alpha},T}\sum_{n=1}^{\mathcal{S}}\left\|e_H^{n}\right\|_{H}+2\tau Q_f\sum_{n=2}^{\mathcal{S}}\sum_{k=1}^{n-1}\widetilde{\lambda}_{n,k}\left\|e_H^{n-\frac{1}{2}}\right\|_{H},
\end{aligned}  
\end{equation}
which, together with \eqref{lam3}, \eqref{wpro1}-\eqref{wpro4}, implies
\begin{equation}\nonumber
\begin{aligned}
&\left(1-\frac{Q_g\tau}{2}\right)\|e_{H}^{N}\|_{H}\\
\le &6\tau Q_g\sum_{n=2}^{N}\left\|e_H^{n-1}\right\|_{H}+3\tau Q_gQ_T\sum_{n=3}^{N}\left(\left\|e_H^{n-1}\right\|_{H}+\left\|e_H^{n}\right\|_{H}\right)+6\tau Q_L\sum_{n=1}^{N}\left\|R^n\right\|_{H,\infty}\\
&+6\tau Q_f Q_{\bar{\alpha},T}\sum_{n=1}^{N}\left\|e_H^{n}\right\|_{H}+3\tau Q_fQ_{\bar{\alpha},T}\sum_{n=2}^{N}\left(\left\|e_H^{n-1}\right\|_{H}+\left\|e_H^{n}\right\|_{H}\right).
\end{aligned}  
\end{equation}
Thus, when $\tau\le 1/(Q_g+6Q_gQ_T+18Q_fQ_{\bar{\alpha},T})$, the following estimate 
holds
\begin{equation*}
\|e_{H}^{N}\|_{H}\le 12\tau\left(Q_g+Q_gQ_T+2Q_fQ_{\bar{\alpha},T}\right)\sum_{n=2}^{N}\left\|e_H^{n-1}\right\|_{H}+12\tau Q_L\sum_{n=1}^{N}\left\|R^n\right\|_{H,\infty}.
\end{equation*}
We then apply Gr\"{o}nwall's inequality and Lemma \ref{errorestimate} to yield
\begin{equation}\nonumber
\|e_{H}^{N}\|_{H}\le Q\left(\tau^{2}+H_x^4+H_y^4\right).
\end{equation}
We thus complete the proof.
\end{proof}

\begin{remark}\label{remark2}
In fact, by applying Lemmas \ref{inteesti1}--\ref{inteesti2}, the above theorem implies that
\begin{equation*}
\begin{aligned}
\left\|u^{n}-\Pi_{H}U_{H}^{n}\right\|_{h}&=\left\|u^{n}-\Pi_{H}u^{n}+\Pi_{H}u^{n}-\Pi_{H}U_{H}^{n}\right\|_{h}\\
&\le \left\|u^{n}-\Pi_{H}u^{n}\right\|_{h}+\left\|\Pi_{H}u^{n}-\Pi_{H}U_{H}^{n}\right\|_{h}\\
&\le Q_L\left\|u^{n}-\Pi_{H}u^{n}\right\|_{h,\infty}+48\left\|u^{n}-U_{H}^{n}\right\|_{H}\\
&\le Q\left(\tau^{2}+H_x^4+H_y^4\right).
\end{aligned}  
\end{equation*}
\end{remark}

\vskip 1mm 
At last, we shall give an error estimate for the linearized compact difference scheme
\eqref{fine-scheme-1}-\eqref{fine-scheme-4} on the fine grid. To this end, denote 
$[e_h]_{i,j}^{n}=u_{i,j}^{n}-[U_h]_{i,j}^{n}$ for $(i,j)\in\bar{\omega}_h$ and $0\le n\le N$.

\vskip 1mm 
\begin{theorem}
Let $u_{i,j}^n$ and $[U_h]_{i,j}^{n}$ be the solutions of \eqref{e3.12} and \eqref{fine-scheme-1}-\eqref{fine-scheme-4} on the fine grid, respectively. Under the conditions in Remark \ref{regularity}, if the step sizes $\tau$, $h_x$ and $h_y$ are sufficiently small, we have 
\begin{equation*}
\left\|u^{n}-U_{h}^{n}\right\|_{h}\le Q\left(\tau^{2}+h_x^4+h_y^4+H_4^4+H_y^4\right),\qquad \text{for}~1\le n\le N.
\end{equation*}
Here, the constant $Q$ is some constant depending on $T,\bar{\alpha},Q_g,Q_f$, $Q_L$.
\end{theorem}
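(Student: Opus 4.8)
The plan is to reproduce the energy argument of the coarse-grid convergence theorem, the essential new ingredient being the treatment of the Newton linearization error carried by $F^n$. First I would set $[e_h]_{i,j}^n = u_{i,j}^n - [U_h]_{i,j}^n$ (so that $e_h^0 = 0$ by \eqref{fine-scheme-3}) and subtract the fine-grid scheme \eqref{fine-scheme-1}--\eqref{fine-scheme-2} from the exact discrete identity \eqref{e3.12} written on the fine grid. For $n=1$ this gives
\[
c_0[\mathcal{A}_h\delta_t e_h]^1 - \lambda_{1,1}[\Lambda_h e_h]^1 = \lambda_{1,1}\mathcal{A}_h\big(f(u^1)-F^1\big) + (R)^1,
\]
and for $2\le n\le N$ an analogous identity in which the nonlinear part splits into the current-level piece $\tfrac{\lambda_{n,n}}{2}\mathcal{A}_h(f(u^n)-F^n)$ and the history pieces $\sum_{k=1}^{n-1}\widetilde{\lambda}_{n,k}\mathcal{A}_h(f(u^k)-f([U_h]^k))$. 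The history pieces are Lipschitz, bounded by $Q_f\sum_k\widetilde{\lambda}_{n,k}\|e_h^k\|_h$, and will be absorbed into the Grönwall sum exactly as before.

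The decisive step is decomposing the current-level linearization error. Writing $v^n := [\Pi_H U_H]^n$ and $\rho^n := u^n - v^n$, a Taylor expansion of $f$ about $v^n$ yields
\[
f(u^n) - F^n = f'(v^n)\,[e_h]^n + \tfrac{1}{2}f''(\xi^n)(\rho^n)^2,
\]
with $\xi^n$ between $u^n$ and $v^n$. The first term is linear in $e_h^n$ and is handled by $|f'|\le Q_f$ as in the coarse-grid proof. The second term is the genuinely new object: since $u$ is bounded and $\Pi_H U_H^n$ is controlled by the coarse-grid stability bound together with Lemma \ref{inteesti2}, $f''(\xi^n)$ is bounded on the relevant range, so $\|f''(\xi^n)(\rho^n)^2\|_h \le Q\|\rho^n\|_{h,\infty}\|\rho^n\|_h$, and Remark \ref{remark2} (with Lemma \ref{inteesti1} supplying the $L^\infty$ factor) bounds this by $Q(\tau^2 + H_x^4 + H_y^4)$. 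This is exactly the mechanism by which the coarse-mesh parameters $H_x,H_y$ enter the final estimate.

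With this in hand the energy machinery proceeds verbatim. I would take the inner products $(\cdot,\cdot)_h$ of the $n=1$ equation with $2\tau e_h^1$ and of the $2\le n\le N$ equations with $2\tau e_h^{n-1/2}$, sum over $n$, and invoke: the telescoping identity for the $\mathcal{A}_h\delta_t$ terms as in \eqref{e4.4}; the summation-by-parts bound for the memory sum $\sum_k\tilde{w}_{n-k}\mathcal{A}_h\delta_t e_h^k$ as in \eqref{e4.5}; the positivity \eqref{positive} furnished by Lemmas \ref{positivityofLambda} and \ref{PIpositive} to discard the elliptic and PI terms sitting on the left; and $\|\cdot\|_{\mathcal{A}_h}^2 \ge \tfrac13\|\cdot\|_h^2$ from Lemma \ref{space1}. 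Selecting the index $\mathcal{J}$ with $\|e_h^{\mathcal{J}}\|_h = \max_{1\le n\le N}\|e_h^n\|_h$ then reduces the accumulated inequality to a linear one in $\|e_h^{\mathcal{J}}\|_h$.

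Finally, the coefficient estimates \eqref{wpro1}--\eqref{wpro4} and the weight bound \eqref{lam3} show that every history sum carries a factor $O(\tau)$ or is uniformly bounded; the truncation contribution is controlled by Lemma \ref{errorestimate} (converting $\|R^n\|_{h,\infty}$ to $\|R^n\|_h$ through the factor $Q_L$); and the Newton remainder contributes the $H_x^4+H_y^4$ terms established above. A discrete Grönwall inequality, valid once $\tau \le 1/(Q_g + 6Q_gQ_T + 18Q_fQ_{\bar{\alpha},T})$, then closes the estimate and gives $\|e_h^N\|_h \le Q(\tau^2 + h_x^4 + h_y^4 + H_x^4 + H_y^4)$ for arbitrary $N$. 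I expect the main obstacle to be the rigorous control of the quadratic Newton remainder $\tfrac12 f''(\xi^n)(\rho^n)^2$: one must ensure $f''$ is evaluated on a bounded set (requiring an a priori bound on $\Pi_H U_H^n$ from coarse-grid stability and the interpolation estimates) and then extract the $L^2$--$L^\infty$ product bound on $(\rho^n)^2$ from Remark \ref{remark2}; everything else is a transcription of the coarse-grid argument with $H$ replaced by $h$.
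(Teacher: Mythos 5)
Your overall energy framework (error equations, inner products with $2\tau e_h^1$ and $2\tau e_h^{n-\frac12}$, the bounds analogous to \eqref{e4.4}--\eqref{e4.5}, the positivity \eqref{positive}, the max-index trick, the coefficient estimates \eqref{wpro1}--\eqref{wpro4} and \eqref{lam3}, Lemma \ref{errorestimate}, and Gr\"onwall) coincides with the paper's proof. But the one step where you diverge --- the treatment of $f(u^n)-F^n$ --- is precisely where your argument has a genuine gap. You expand $f$ to second order about $v^n=\Pi_H U_H^n$, producing the remainder $\tfrac12 f''(\xi^n)(\rho^n)^2$ with $\rho^n=u^n-\Pi_H U_H^n$, and you then need (i) $f''(\xi^n)$ evaluated on a bounded set and (ii) an $L^\infty$ bound on $\rho^n$ to invoke $\|(\rho^n)^2\|_h\le\|\rho^n\|_{h,\infty}\|\rho^n\|_h$. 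Neither is available from the paper's hypotheses or lemmas: the standing assumption is only $f\in C^2(\mathbb{R})$ with $|f'(u)|\le Q_f$, so $f''$ is not globally bounded (Case (II) of the experiments, $f(u)=u-u^3$, is exactly a situation where derivative bounds fail), and all the a priori control you cite is $L^2$-type --- Theorem \ref{staboncoarse} bounds $\|U_H^n\|_H$, Lemma \ref{inteesti2} maps $\|\cdot\|_H$ to $\|\cdot\|_h$, and Remark \ref{remark2} bounds $\|\rho^n\|_h$, while Lemma \ref{inteesti1} controls $\|u^n-\Pi_H u^n\|_{h,\infty}$ but says nothing about $\|\Pi_H(u^n-U_H^n)\|_{h,\infty}$. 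You flagged this yourself as ``the main obstacle,'' but it cannot be extracted from the quoted results; it would require extra machinery such as a discrete inverse inequality (which in 2D costs a factor $H^{-1}$) or a separate max-norm error analysis on the coarse grid.

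The paper sidesteps the issue entirely with a first-order decomposition: by the mean value theorem, $f(u^n)=f(\Pi_H U_H^n)+f'(\eta^n)\rho^n$, hence
\[
f(u^n)-F^n=\bigl(f'(\eta^n)-f'(\Pi_H U_H^n)\bigr)\rho^n+f'(\Pi_H U_H^n)\,e_h^n,
\]
so that $\|f(u^n)-F^n\|_h\le 2Q_f\|\rho^n\|_h+Q_f\|e_h^n\|_h$, using nothing beyond the global bound on $f'$ and the $L^2$ estimate of Remark \ref{remark2}. This yields the coarse-grid contribution $H_x^4+H_y^4$ at \emph{first} power, which is exactly what the theorem claims; your quadratic remainder, if it could be justified, would give the stronger $(\tau^2+H_x^4+H_y^4)^2$ contribution familiar from classical two-grid theory, but that stronger bound is not provable under the paper's assumptions. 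Replace your Taylor step by the mean-value decomposition above and the rest of your argument goes through essentially as written.
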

\begin{proof}
subtracting \eqref{fine-scheme-1} and \eqref{fine-scheme-2} 
from \eqref{e3.12} on the fine grid, we obtain
\begin{equation}\label{conver-3}
c_{0}[\mathcal{A}_{h}\delta_{t}e_{h}]_{i,j}^{1}-\lambda_{1,1}[\Lambda_{h} e_{h}]_{i,j}^{1}=\lambda_{1,1}\mathcal{A}_{h}\left(f(u_{i,j}^{1})-F_{i,j}^{1}\right)+(R)_{i,j}^{1},
\end{equation}
and for $2\le n\le N$ 
\begin{equation}\label{conver-4}
\begin{aligned}
    &c_{0}[\mathcal{A}_{h}\delta_{t}e_{h}]_{i,j}^{n}+\sum_{k=1}^{n-1}\tilde{w}_{n-k}[\mathcal{A}_{h}\delta_{t}e_{h}]_{i,j}^{k}-\mathfrak{I}_{\bar{\alpha}}^{n}[\Lambda_h e_h]_{i,j}^{n}\\
    =&\frac{\lambda_{n,n}\mathcal{A}_{h}\left(f(u_{i,j}^{n})-F_{i,j}^{n}\right)}{2}+\sum_{k=1}^{n-1}\widetilde{\lambda}_{n,k}\mathcal{A}_{h}\left(f(u_{i,j}^{k})-f([U_{h}]_{i,j}^{k})\right)+(R)_{i,j}^{n},
\end{aligned}
\end{equation}
with $(i,j)\in\omega_{H}$. Besides, it holds that $[e_{h}]_{i,j}^0=0$ for $(i,j)\in\omega_{h}$. Then, taking the inner product $(\cdot,\cdot)_{h}$ of equation \eqref{conver-3} with $2\tau e_h^1$, we have
\begin{equation}\nonumber
2c_{0}\tau\left(\mathcal{A}_{h}\delta_{t}e_{h}^{1},e_{h}^{1}\right)_{h}-2\lambda_{1,1}\tau\left(\Lambda_h e_{h}^{1},e_{h}^{1}\right)_{h}
=2\lambda_{1,1}\tau\left(\mathcal{A}_{h}\left(f(u^{1})-F^{1}\right),e_{h}^{1}\right)_{h}+2\tau\left(R^1,e_{h}^{1}\right)_{h},
\end{equation}
which, together with Cauchy-Schwarz inequality, implies 
\begin{equation}\label{e4.34}
c_{0}\left\|e_h^1\right\|^2_{\mathcal{A}_h}-2\tau\left(\mathfrak{I}_{\bar{\alpha}}^1\Lambda_h e_{h}^{1},e_{h}^{1}\right)_{h}
\le 2\lambda_{1,1}\tau\left\|\mathcal{A}_{h}\left(f(u^{1})-F^{1}\right)\right\|_{h}\left\|e_h^1\right\|_h+2\tau Q_L\left\|R^1\right\|_{h,\infty}\left\|e_h^1\right\|_h.
\end{equation}
Then, taking the inner product $(\cdot,\cdot)_{h}$ of equation \eqref{conver-4} 
with $2\tau e_h^{n-\frac{1}{2}}$ and summing up 
for $n$ from $2$ to $m$ for $2\le m\le N$, we get 
\begin{equation}\label{e4.35}
\begin{aligned}
&2c_{0}\tau\sum_{n=2}^{m}\left(\mathcal{A}_{h}\delta_{t}e_{h}^{n},e_{h}^{n-\frac{1}{2}}\right)_{h}-2\tau\sum_{n=2}^{m}\left(\mathfrak{I}_{\bar{\alpha}}^{n}\Lambda_h e_h^{n},e_{h}^{n-\frac{1}{2}}\right)_{h}\\
=&-2\tau\sum_{n=2}^{m}\sum_{k=1}^{n-1}\tilde{w}_{n-k}\left(\mathcal{A}_{h}\delta_{t}e_{h}^{k},e_{h}^{n-\frac{1}{2}}\right)_{h}+\tau\sum_{n=2}^{m}\lambda_{n,n}\left(\mathcal{A}_{h}\left(f(u^{n})-F^n\right),e_{h}^{n-\frac{1}{2}}\right)_{h}\\
&+2\tau\sum_{n=2}^{m}\sum_{k=1}^{n-1}\widetilde{\lambda}_{n,k}\left(\mathcal{A}_{h}\left(f(u^{k})-f(U_{h}^{k})\right),e_{h}^{n-\frac{1}{2}}\right)_{h}+2\tau\sum_{n=2}^{m}\left(R^n,e_{h}^{n-\frac{1}{2}}\right)_{h}.
\end{aligned}
\end{equation}
Following a procedure similar to the derivation of \eqref{e4.4} and \eqref{e4.5}, and using the Cauchy–Schwarz inequality, \eqref{e4.35} can be readily transformed into 
\begin{equation}\label{e4.36}
\begin{aligned}
&c_{0}\|e_{h}^{m}\|_{\mathcal A_h}^{2}-c_{0}\left\|e_h^1\right\|^2_{\mathcal{A}_h}-2\tau\sum_{n=2}^{m}\left(\mathfrak{I}_{\bar{\alpha}}^{n}\Lambda_h e_h^{n},e_{h}^{n-\frac{1}{2}}\right)_{h}\\
\le &2\tau Q_L\sum_{n=2}^{m}\left\|R^n\right\|_{h,\infty}\left\|e_{h}^{n-\frac{1}{2}}\right\|_{h}+\sum_{n=2}^{m}|\tilde{w}_{1}|\left\|e_h^{n-1}\right\|_{h}\left(\left\|e_h^{n}\right\|_{h}+\left\|e_h^{n-1}\right\|_{h}\right)\\
&+\sum_{n=3}^{m}\sum_{k=2}^{n-1}|\tilde{w}_{k}-\tilde{w}_{k-1}|\left\|e_{h}^{n-k}\right\|_{h}\left(\left\|e_h^{n-1}\right\|_{h}+\left\|e_h^{n}\right\|_{h}\right)\\
&+\tau\sum_{n=2}^{m}\lambda_{n,n}\left\|\mathcal{A}_{h}\left(f(u^{n})-F^{n}\right)\right\|_{h}\left\|e_h^{n-\frac{1}{2}}\right\|_{h}+2\tau Q_f\sum_{n=2}^{m}\sum_{k=1}^{n-1}\widetilde{\lambda}_{n,k}\left\|e_h^{k}\right\|_{h}\left\|e_h^{n-\frac{1}{2}}\right\|_{h},
\end{aligned}  
\end{equation}
in which, we used the fact 
\begin{equation}\nonumber
\begin{aligned}
\left(\mathcal{A}_{h}\left(f(u^{k})-f(U_{h}^{k})\right),e_{h}^{n-\frac{1}{2}}\right)_{h}\le Q_f\left\|e_h^{k}\right\|_{h}\left\|e_h^{n-\frac{1}{2}}\right\|_{h}\quad \text{for}~1\le k\le m-1.
\end{aligned}  
\end{equation}
Next, from a similar process for \eqref{positive}, putting \eqref{e4.34} and 
\eqref{e4.36} together leads to
\begin{equation}\nonumber
\begin{aligned}
&c_{0}\|e_{h}^{m}\|_{\mathcal A_h}^{2}-2\lambda_{1,1}\tau\left\|\mathcal{A}_{h}\left(f(u^{1})-F^{1}\right)\right\|_{h}\left\|e_h^1\right\|_h-2\tau Q_L\left\|R^1\right\|_{h,\infty}\left\|e_h^1\right\|_h\\
\le &2\tau Q_L\sum_{n=2}^{m}\left\|R^n\right\|_{h,\infty}\left\|e_{h}^{n-\frac{1}{2}}\right\|_{h}+\sum_{n=2}^{m}|\tilde{w}_{1}|\left\|e_h^{n-1}\right\|_{h}\left(\left\|e_h^{n}\right\|_{h}+\left\|e_h^{n-1}\right\|_{h}\right)\\
&+\sum_{n=3}^{m}\sum_{k=2}^{n-1}|\tilde{w}_{k}-\tilde{w}_{k-1}|\left\|e_{h}^{n-k}\right\|_{h}\left(\left\|e_h^{n-1}\right\|_{h}+\left\|e_h^{n}\right\|_{h}\right)\\
&+\tau\sum_{n=2}^{m}\lambda_{n,n}\left\|\mathcal{A}_{h}\left(f(u^{n})-F^{n}\right)\right\|_{h}\left\|e_h^{n-\frac{1}{2}}\right\|_{h}+2\tau Q_f\sum_{n=2}^{m}\sum_{k=1}^{n-1}\widetilde{\lambda}_{n,k}\left\|e_h^{k}\right\|_{h}\left\|e_h^{n-\frac{1}{2}}\right\|_{h}.
\end{aligned}  
\end{equation}
Therefore, choosing a suitable $\mathcal{F}$ such that 
$\|e_{h}^{\mathcal{F}}\|_{h}=\max\limits_{1\leq n\leq N}\|e_{h}^{n}\|_{h}$, we have
\begin{equation}\nonumber
\begin{aligned}
\frac{c_0}{3}\|e_{h}^{\mathcal{F}}\|_{h}\le &2\tau Q_L\sum_{n=1}^{\mathcal{F}}\left\|R^n\right\|_{h,\infty}+2\sum_{n=2}^{\mathcal{F}}|\tilde{w}_{1}|\left\|e_h^{n-1}\right\|_{h}+2\tau\sum_{n=1}^{\mathcal{F}}\lambda_{n,n}\left\|f(u^{n})-F^{n}\right\|_{h}\\
&+\sum_{n=3}^{\mathcal{F}}\sum_{k=2}^{n-1}|\tilde{w}_{k}-\tilde{w}_{k-1}|\left(\left\|e_h^{n-1}\right\|_{h}+\left\|e_h^{n}\right\|_{h}\right)+2\tau Q_f\sum_{n=2}^{\mathcal{F}}\sum_{k=1}^{n-1}\widetilde{\lambda}_{n,k}\left\|e_h^{n-\frac{1}{2}}\right\|_{h}.
\end{aligned}  
\end{equation}
At this point, our objective is to discuss the estimate of the third term on the right 
hand side. To this end, we apply the mean value theorem to obtain 
\begin{equation*}
  f(u_{i,j}^n)=f([\Pi_HU_H]_{i,j}^n)+f'(\eta_{i,j}^n)(u_{i,j}^n-[\Pi_HU_H]_{i,j}^n),
\end{equation*}
in which $\eta_{i,j}^{n}\in\left(\min\{u_{i,j}^{n},[\Pi_{H}U_{H}]_{i,j}^{n}\},\max\{u_{i,j}^{n},[\Pi_{H}U_{H}]_{i,j}^{n}\}\right)$ for $(i,j)\in\omega_h$ and $1\le n\le N$. In this case, we get 
\begin{equation*}
\begin{aligned}
\left\|f(u^{n})-F^{n}\right\|_{h}
=&\left\|\left(f'(\eta^{n})-f'(\Pi_{H}U_{H}^{n})\right)(u^{n}-\Pi_{H}U_{H}^{n})+f'(\Pi_{H}U_{H}^{n})(u^{n}-U_{h}^{n})\right\|_{h} \\
\le& 2Q_f\left\|u^{n}-\Pi_{H}U_{H}^{n}\right\|_{h} + Q_f\left\|e_h^n\right\|_{h},
\end{aligned}
\end{equation*}
which, together with \eqref{lam3} and \eqref{wpro1}-\eqref{wpro4}, implies
\begin{equation}\nonumber
\begin{aligned}
\left(1-\frac{Q_g\tau}{2}\right)\|e_{h}^{N}\|_{h}
\le &6\tau Q_g\sum_{n=2}^{N}\left\|e_h^{n-1}\right\|_{h}+3\tau Q_gQ_T\sum_{n=3}^{N}\left(\left\|e_h^{n-1}\right\|_{h}+\left\|e_h^{n}\right\|_{h}\right)+6\tau Q_L\sum_{n=1}^{N}\left\|R^n\right\|_{h,\infty}\\
&+6\tau Q_f Q_{\bar{\alpha},T}\sum_{n=1}^{N}\left(\left\|e_h^{n}\right\|_{h}+2\left\|u^{n}-\Pi_{H}U_{H}^{n}\right\|_{h}\right)
+3\tau Q_fQ_{\bar{\alpha},T}\sum_{n=2}^{N}\left(\left\|e_h^{n-1}\right\|_{h}+\left\|e_h^{n}\right\|_{h}\right).
\end{aligned}  
\end{equation}
Thus, when $\tau\le 1/(Q_g+6Q_gQ_T+18Q_fQ_{\bar{\alpha},T})$, the following estimate 
can be obtained
\begin{equation*}
\begin{aligned}
\|e_{h}^{N}\|_{h}\le &12\tau\left(Q_g+Q_gQ_T+2Q_fQ_{\bar{\alpha},T}\right)\sum_{n=2}^{N}\left\|e_h^{n-1}\right\|_{h}\\
&+12\tau Q_L\sum_{n=1}^{N}\left\|R^n\right\|_{h,\infty}+24\tau Q_f Q_{\bar{\alpha},T}\sum_{n=1}^{N}\left\|u^{n}-\Pi_{H}U_{H}^{n}\right\|_{h}.
\end{aligned}
\end{equation*}
Lemma \ref{errorestimate}, the estimates in Remark \ref{remark2} and Gr\"{o}nwall's inequality give
\begin{equation}\nonumber
\|e_{h}^{N}\|_{h}\le Q\left(\tau^{2}+h_x^4+h_y^4+H_x^4+H_y^4\right),
\end{equation}
which completes the proof of the theorem.
\end{proof}

\section{Numerical experiments}\label{sec5}
In this section, we perform numerical experiments to assess the efficiency and accuracy of the STG compact difference algorithm. For comparison, we also present results from the standard nonlinear compact difference scheme \eqref{standard fine} alongside those of the proposed STG scheme \eqref{coarse-scheme-1}--\eqref{fine-scheme-4}. All experiments are performed on a computer with Windows 11(64-bit) PC-Inter(R) Core(TM) i5-12400 CPU 2.50 GHz, 64.0 GB of RAM, using MATLAB R2021a. Below we take $\Omega=(0,1)\times (0,1)$, $h=h_x=h_y=\frac{1}{M}$ and $T=1$, with $M=M_h=M_{h,x}=M_{h,y}$.

To assess the convergence of proposed methods with unknown exact solutions, we apply the two-mesh principle \cite[p.~107]{Far} to define errors and temporal convergence rates as
\begin{equation*}
E(\tau,h)=\max\limits_{0\le n\le N}\sqrt{h^{2}\sum_{i=1}^{M-1}\sum_{j=1}^{M-1}\left|U_{i,j}^{n}(\tau,h)-U_{i,j}^{2n}(\frac{\tau}{2},h)\right|^{2}},\quad Rate^{\tau}=\log_2\left(\frac{E(2\tau,h)}{E(\tau,h)}\right),
\end{equation*}
and corresponding spatial errors and convergence rates as
\begin{equation*}
S(\tau,h)=\sqrt{h^{2}\sum_{i=1}^{M-1}\sum_{j=1}^{M-1}\left|U_{i,j}^{N}(\tau,h)-U_{2i,2j}^{N}(\tau,\frac{h}{2})\right|^{2}},\quad Rate^{h}=\log_2\left(\frac{S(\tau,2h)}{S(\tau,h)}\right),
\end{equation*}
with $U_{i}^{n}(\tau,h)$ being the numerical solution when time step size is $\tau$ 
and the spatial mesh size is $h$. For clarity, we denote by $M_H$ ($M_H = M_{H,x} = M_{H,y}$) and $M_h$ the numbers of coarse and fine mesh points in the two-grid scheme, respectively, while $M_h$ also represents the number of mesh points in the standard compact difference scheme.

\vskip 1mm
\textbf{Example.} We here consider the model 
\eqref{VtFDEs}-\eqref{bc} with $u_{0}(x,y)=\sin(\pi x)\sin(\pi y)$ and
$\bar{u}_{0}(x,y)=\sin(2\pi x)\sin(2\pi y)$. Meanwhile, to satisfy the very weak conditions $\alpha'(0)= 0$, we choose
$\alpha(t)=\alpha_{0}+\frac{1}{11}t^{2}$ and limit $1<\alpha_{0}\le 1.9$ such that
$\alpha(t)\in (1,2)$. Then, we focus on the nonlinear terms in the following two cases:
\begin{enumerate}[label=(\Roman*)]
    \item $f(u) = \frac{u^2}{1+u^2} - u$, satisfying $|f'(u)|\leq Q_f$;
    \item $f(u) = u - u^3$, for which $|f'(u)|$ is not globally bounded.
\end{enumerate}

\begin{table}\footnotesize\centering
\caption{Discrete errors, temporal convergence rates, and CPU time (seconds) with $M_h=M_H^2=64$ for Case (I).}
\label{table1}  
\begin{tabular}{ccccccccc}
\hline\noalign{\smallskip}
\multirow{2}{*}{$\alpha_{0}$} &\multirow{2}{*}{$N$} &\multicolumn{3}{c}{STG algorithm} & &  \multicolumn{3}{c}{Nonlinear algorithm} \\
\cline{3-5} \cline{7-9}
& & $E(\tau,h)$ & $Rate^{\tau}$ & CPU(s) & & $E(\tau,h)$ & $Rate^{\tau}$ & CPU(s)\\
\noalign{\smallskip}\hline\noalign{\smallskip}
       & 128  & 3.4832e-03 & *    & 1.76   & & 3.4832e-03 & *    & 2.15    \\
       & 256  & 8.5709e-04 & 2.02 & 3.86   & & 8.5709e-04 & 2.02 & 5.61    \\
$1.20$ & 512  & 2.0515e-04 & 2.06 & 9.59   & & 2.0515e-04 & 2.06 & 17.29   \\
       & 1024 & 4.8961e-05 & 2.07 & 26.59  & & 4.8961e-05 & 2.07 & 60.03   \\
       & 2048 & 1.1850e-05 & 2.05 & 81.64  & & 1.1850e-05 & 2.05 & 221.83  \\
\noalign{\smallskip}\hline\noalign{\smallskip}
       & 128  & 8.9653e-04 & *	   & 1.68   & & 8.9653e-04 & *	  & 2.11    \\
       & 256  & 2.0331e-04 & 2.14 & 3.76   & & 2.0331e-04 & 2.14 & 5.64    \\
$1.50$ & 512  & 4.8144e-05 & 2.08 & 9.24   & & 4.8144e-05 & 2.08 & 16.83   \\
       & 1024 & 1.1619e-05 & 2.05 & 26.04  & & 1.1619e-05 & 2.05 & 58.74  \\
       & 2048 & 2.8374e-06 & 2.03 & 81.38  & & 2.8374e-06 & 2.03 & 220.95  \\
\noalign{\smallskip}\hline\noalign{\smallskip}
       & 128  & 6.2986e-04 & *    & 2.08   & & 6.2986e-04 & *    & 2.54    \\
       & 256  & 1.5207e-04 & 2.05 & 4.62   & & 1.5207e-04 & 2.05 & 6.26   \\
$1.80$ & 512  & 3.7236e-05 & 2.03 & 10.99  & & 3.7236e-05 & 2.03 & 17.98   \\
       & 1024 & 9.1934e-06 & 2.02 & 29.40  & & 9.1934e-06 & 2.02 & 62.03  \\
       & 2048 & 2.2817e-06 & 2.01 & 87.79  & & 2.2817e-06 & 2.01 & 227.15  \\
\noalign{\smallskip}\hline
\end{tabular}
\end{table}

\begin{table}\footnotesize\centering
\caption{Discrete errors, spatial convergence rates, and CPU time (seconds) with $N=256$ and $J=4$ for Case (I).}
\label{table2}  
\begin{tabular}{cccccccccc}
\hline\noalign{\smallskip}
\multirow{2}{*}{$\alpha_{0}$} &\multirow{2}{*}{$M_H$} &\multirow{2}{*}{$M_h$} &\multicolumn{3}{c}{STG algorithm} & &  \multicolumn{3}{c}{Nonlinear algorithm} \\
\cline{4-6} \cline{8-10}
& & &   $S(\tau,h)$    & $Rate^{h}$ & CPU(s) & & $S(\tau,h)$    & $Rate^{h}$ & CPU(s) \\
\noalign{\smallskip}\hline\noalign{\smallskip}
       & 4 & 16 & 6.6089e-06 & *    & 0.41  & & 6.6316e-06 & *    & 0.37  \\
       & 8 & 32 & 4.0673e-07 & 4.02 & 1.04  & & 4.0655e-07 & 4.03 & 1.28  \\
$1.30$ & 16& 64 & 2.5289e-08 & 4.01 & 4.01  & & 2.5288e-08 & 4.01 & 5.64  \\
       & 32& 128& 1.5786e-09 & 4.00 & 16.84 & & 1.5786e-09 & 4.00 & 26.01 \\
       & 64& 256& 9.8634e-11 & 4.00 & 78.76 & & 9.8634e-11 & 4.00 & 114.53\\
\noalign{\smallskip}\hline\noalign{\smallskip}
       & 4 & 16 & 4.8238e-06 & *    & 0.38  & & 4.6728e-06 & *	   & 0.35  \\
       & 8 & 32 & 2.8932e-07 & 4.06 & 1.08  & & 2.8931e-07 & 4.01 & 1.38  \\
$1.60$ & 16& 64 & 1.8033e-08 & 4.00 & 4.31  & & 1.8032e-08 & 4.00 & 6.26  \\
       & 32& 128& 1.1262e-09 & 4.00 & 18.21 & & 1.1262e-09 & 4.00 & 29.72 \\
       & 64& 256& 7.0358e-11 & 4.00 & 85.29 & & 7.0355e-11 & 4.00 & 132.40\\
\noalign{\smallskip}\hline\noalign{\smallskip}
       & 4 & 16 & 1.6245e-04 & *    & 10.07 & & 1.6244e-04 & *    & 9.96 \\
       & 8 & 32 & 9.9627e-06 & 4.03 & 10.72 & & 9.9642e-06 & 4.03 & 10.86 \\
$1.90$ & 16& 64 & 6.2017e-07 & 4.01 & 13.81 & & 6.2017e-07 & 4.01 & 15.11 \\
       & 32& 128& 3.8721e-08 & 4.00 & 26.98 & & 3.8721e-08 & 4.00 & 35.22 \\
       & 64& 256& 2.4191e-09 & 4.00 & 90.42 & & 2.4191e-09 & 4.00 & 121.42\\
\noalign{\smallskip}\hline
\end{tabular}
\end{table}

\begin{table}\footnotesize\centering
\caption{Discrete errors, temporal convergence rates, and CPU time (seconds) with $M_h=M_H^2=64$ for Case (II).}
\label{table3}  
\begin{tabular}{ccccccccc}
\hline\noalign{\smallskip}
\multirow{2}{*}{$\alpha_{0}$} &\multirow{2}{*}{$N$} &\multicolumn{3}{c}{STG algorithm} & &  \multicolumn{3}{c}{Nonlinear algorithm} \\
\cline{3-5} \cline{7-9}
& & $E(\tau,h)$ & $Rate^{\tau}$ & CPU(s) & & $E(\tau,h)$ & $Rate^{\tau}$ & CPU(s)\\
\noalign{\smallskip}\hline\noalign{\smallskip}
       & 128  & 3.3151e-03 & *    & 1.91   & & 3.3151e-03 & *    & 3.22    \\
       & 256  & 8.2075e-04 & 2.01 & 4.22   & & 8.2075e-04 & 2.01 & 9.61    \\
$1.20$ & 512  & 1.9683e-04 & 2.06 & 10.54  & & 1.9683e-04 & 2.06 & 32.44   \\
       & 1024 & 4.7048e-05 & 2.06 & 29.58  & & 4.7048e-05 & 2.06 & 118.21  \\
       & 2048 & 1.1391e-05 & 2.05 & 93.90  & & 1.1391e-05 & 2.05 & 449.01  \\
\noalign{\smallskip}\hline\noalign{\smallskip}
       & 128  & 8.5833e-04 & *	   & 1.69   & & 8.5833e-04 & *	  & 3.41    \\
       & 256  & 1.9796e-04 & 2.12 & 3.81   & & 1.9796e-04 & 2.12 & 9.94    \\
$1.50$ & 512  & 4.7244e-05 & 2.07 & 9.50   & & 4.7244e-05 & 2.07 & 33.27   \\
       & 1024 & 1.1439e-05 & 2.05 & 26.64  & & 1.1439e-05 & 2.05 & 121.90  \\
       & 2048 & 2.7992e-06 & 2.03 & 84.22  & & 2.7992e-06 & 2.03 & 471.38  \\
\noalign{\smallskip}\hline\noalign{\smallskip}
       & 128  & 5.8495e-04 & *    & 2.13   & & 5.8495e-04 & *    & 3.55    \\
       & 256  & 1.4113e-04 & 2.05 & 4.67   & & 1.4113e-04 & 2.05 & 10.04   \\
$1.80$ & 512  & 3.4530e-05 & 2.03 & 11.20  & & 3.4530e-05 & 2.03 & 32.72   \\
       & 1024 & 8.5231e-06 & 2.02 & 30.00  & & 8.5231e-06 & 2.02 & 119.98  \\
       & 2048 & 2.1149e-06 & 2.01 & 90.93  & & 2.1149e-06 & 2.01 & 455.23  \\
\noalign{\smallskip}\hline
\end{tabular}
\end{table}

\begin{table}\footnotesize\centering
\caption{Discrete errors, spatial convergence rates, and CPU time (seconds) with $N=256$ and $J=4$ for Case (II).}
\label{table4}  
\begin{tabular}{cccccccccc}
\hline\noalign{\smallskip}
\multirow{2}{*}{$\alpha_{0}$} &\multirow{2}{*}{$M_H$} &\multirow{2}{*}{$M_h$} &\multicolumn{3}{c}{STG algorithm} & &  \multicolumn{3}{c}{Nonlinear algorithm} \\
\cline{4-6} \cline{8-10}
& & &   $S(\tau,h)$    & $Rate^{h}$ & CPU(s) & & $S(\tau,h)$    & $Rate^{h}$ & CPU(s) \\
\noalign{\smallskip}\hline\noalign{\smallskip}
       & 4 & 16 & 7.0393e-06 & *    & 0.44  & & 6.9966e-06 & *    & 0.67  \\
       & 8 & 32 & 4.2880e-07 & 4.04 & 1.13  & & 4.2892e-07 & 4.03 & 2.34  \\
$1.30$ & 16& 64 & 2.6680e-08 & 4.01 & 4.36  & & 2.6680e-08 & 4.01 & 9.62  \\
       & 32& 128& 1.6655e-09 & 4.00 & 18.02 & & 1.6655e-09 & 4.00 & 41.08 \\
       & 64& 256& 1.0406e-10 & 4.00 & 83.67 & & 1.0406e-10 & 4.00 & 175.62\\
\noalign{\smallskip}\hline\noalign{\smallskip}
       & 4 & 16 & 4.4501e-06 & *    & 0.40  & & 4.8103e-06 & *	 & 0.63  \\
       & 8 & 32 & 2.9483e-07 & 3.92 & 1.09  & & 2.9739e-07 & 4.02 & 2.29  \\
$1.60$ & 16& 64 & 1.8531e-08 & 3.99 & 4.38  & & 1.8532e-08 & 4.00 & 9.71  \\
       & 32& 128& 1.1574e-09 & 4.00 & 18.12 & & 1.1574e-09 & 4.00 & 41.29 \\
       & 64& 256& 7.2302e-11 & 4.00 & 83.92 & & 7.2306e-11 & 4.00 & 177.10\\
\noalign{\smallskip}\hline\noalign{\smallskip}
       & 4 & 16 & 1.8446e-04 & *    & 10.02 & & 1.8354e-04 & *    & 10.20 \\
       & 8 & 32 & 1.1263e-05 & 4.03 & 10.72 & & 1.1266e-05 & 4.03 & 11.77 \\
$1.90$ & 16& 64 & 7.0120e-07 & 4.01 & 14.03 & & 7.0121e-07 & 4.01 & 18.71 \\
       & 32& 128& 4.3781e-08 & 4.00 & 27.94 & & 4.3781e-08 & 4.00 & 48.23 \\
       & 64& 256& 2.7352e-09 & 4.00 & 94.21 & & 2.7353e-09 & 4.00 & 173.12\\
\noalign{\smallskip}\hline
\end{tabular}
\end{table}

We employ the STG compact difference scheme \eqref{coarse-scheme-1}–\eqref{fine-scheme-4} 
and the standard nonlinear compact difference scheme \eqref{standard fine} to simulate two cases of the nonlinear problem. The corresponding numerical results are summarized in Tables \ref{table1}–\ref{table4}. For Case (I), by fixing $M_h = M_H^2 = 64$, the discrete errors, temporal convergence orders, and CPU times of both methods are reported in Table \ref{table1}. The spatial accuracy and efficiency are examined in Table \ref{table2}, where the discrete errors, spatial convergence orders, and CPU times are presented with $N = 256$ and $J = 4$. Under the same parameter settings, Tables \ref{table3} and \ref{table4} display the corresponding results for Case (II). From these numerical results, we observe that:

\begin{itemize}
\item Both methods attain the expected second-order accuracy in time and fourth-order accuracy in space, 
which is in complete agreement with the theoretical analysis.

\item The STG scheme achieves higher computational efficiency than the standard nonlinear scheme, while maintaining comparable numerical accuracy.

\item Even when the derivative of the nonlinear term $f(u)$ is not globally bounded, both methods retain their accuracy and computational efficiency. This indicates that the two algorithms are, to some extent, robust with respect to the magnitude or growth rate of the nonlinearity, although this assumption is required for the theoretical analysis.

\end{itemize}


\section*{Declarations}

\noindent {\textbf{Conflict of interest}}: The authors declare no competing financial interests or personal relationships that could have influenced this work.


\vskip 2mm
\noindent {\textbf{Funding}}: This work is supported by the Scientific Research Fund Project of Yunnan Provincial Education Department (No. 2024J0642), the Yunnan Fundamental Research Projects (No. 202401AU070104), the Scientific Research Fund Project of Yunnan University of Finance and Economics (No. 2024D38), and Postdoctoral Fellowship Program of CPSF (No. GZC20240938).

\vskip 2mm
\noindent {\textbf{Data Availability}}: The datasets are available from the corresponding author upon reasonable request.



\begin{thebibliography}{10}

\bibitem{Cchen} C. Chen, F. Liu, V. Anh, I. Turner, Numerical schemes with high spatial accuracy for a variable-order anomalous subdiffusion equation, {\it SIAM J. Sci. Comput.}, 32 (2010), 1740--1760.

\bibitem{twogird1}C. Chen, K. Li, Y. Chen,Y. Huang, Two-grid finite element methods combined with Crank-Nicolson scheme for nonlinear Sobolev equations, {\it Adv. Comput. Math.}, 45 (2019), 611--630.

\bibitem{twogird2} H. Chen, W. Qiu, M. A. Zaky, A. S. Hendy, A two-grid temporal second-order scheme for the two-dimensional nonlinear Volterra integro-differential equation with weakly singular kernel, {\it Calcolo}, 60 (2023), 13.

\bibitem{DieFor} K. Diethelm, N. Ford, Analysis of fractional differential equations, {\it J. Math. Anal. Appl.}, 265 (2002), 229--248.

\bibitem{duruilian} R. Du, Z. Sun, H. Wang, Temporal second-order finite difference schemes for variable-order time-fractional wave equations, {\it SIAM J. Numer. Anal.}, 60 (2022), 104--132.

\bibitem{Far} P. Farrell, A. Hegarty, J. M. Miller, E. O’Riordan, G. I. Shishkin, {\it Robust computational techniques for boundary layers}, Chapman and hall/CRC, 2000.

\bibitem{fuhongfei} H. Fu, B. Zhang, X. Zheng, A high-order two-grid difference method for nonlinear time-fractional biharmonic problems and its unconditional $\alpha$-robust error estimates, {\it J. Sci. Comput.}, 96 (2023), 54.

\bibitem{GarGiu} R. Garrappa, A. Giusti, F. Mainardi, Variable-order fractional calculus: A change of perspective, {\it Commun. Nonlinear Sci. Numer. Simul.}, 102 (2021), 105904.

\bibitem{wulibeijing1} F. Haldane, Fractional quantization of the Hall effect: A hierarchy of incompressible quantum fluid states, {\it Phys. Rev. Lett.}, 51 (1983), 605--608.

\bibitem{wulibeijing5} R. Herrmann, {\it Fractional Calculus: An Introduction for Physicists}, World Scientific, Hackensack, NJ, 2011.

\bibitem{Hong} J. Hong, B. Jin, Y. Kian, Identification of a spatially-dependent variable order in one-dimensional subdiffusion, {\it SIAM J. Math. Anal.}, 57 (2025), 1315--1341.

\bibitem{Jia} J. Jia, X. Zheng, H. Wang, Numerical discretization and fast approximation of a variably distributedorder fractional wave equation, {\it ESAIM: M2AN}, 55 (2021), 2211--2232.

\bibitem{LiWanWan} B. Li, H. Wang, J. Wang, Well-posedness and numerical approximation of a fractional diffusion equation with a nonlinear variable order, {\it ESAIM: M2AN}, 55 (2021), 171--207.

\bibitem{Liang} H. Liang, M. Stynes, A general collocation analysis for weakly singular Volterra integral equations with variable exponent, {\it IMA J. Numer. Anal.}, 44 (2024), 2725--2751.

\bibitem{LorHar} C. Lorenzo, T. Hartley, Variable order and distributed order fractional operators, {\it Nonlinear Dyn.}, 29 (2002), 57--98.

\bibitem{Lyu} P. Lyu, S. Vong, A symmetric fractional-order reduction method for direct nonuniform approximations of semilinear diffusion-wave equations, {\it J. Sci. Comput.}, 93 (2022), 34.

\bibitem{Ma} Z. Ma, M. Stynes, Sharp error bounds for a fractional collocation method for weakly singular Volterra integral equations with variable exponent, {\it J. Sci. Comput.}, 100 (2024), 41.

\bibitem{wulibeijing4} R. Magin, {\it Fractional Calculus in Bioengineering}, Begell House, Danbury, CT, 2006.

\bibitem{Mclean} W. McLean, K. Mustapha, A second-order accurate numerical method for a fractional wave equation, {\it Numer. Math.}, 105 (2007), 481--510.

\bibitem{wulibeijing3} A. Pipkin, {\it Lectures on Viscoelasticity Theory}, 2nd ed., Springer, New York, 1986.

\bibitem{Qiu25JSC} W. Qiu, X. Zheng, Numerical approximation for variable-exponent fractional diffusion-wave equation, {\it J. Sci. Comput.}, 105 (2025) 53.

\bibitem{SunZha}  H. Sun, Y. Zhang, W. Chen, D. Reeves, Use of a variable-index fractional-derivative model to capture transient dispersion in heterogeneous media, {\it J. Contam. Hydrol.}, 157 (2014) 47--58.

\bibitem{SunCha} H. Sun, A. Chang, Y. Zhang, W. Chen, A review on variable-order fractional differential equations: mathematical foundations, physical models, numerical methods and applications, {\it Fract. Calc. Appl. Anal.}, 22 (2019), 27--59.

\bibitem{wulibeijing6} J. Suzuki, M. Gulian, M. Zayernouri, M. D'Elia, Fractional modeling in action: a survey of nonlocal models for subsurface transport, turbulent flows, and anomalous materials, {\it J. Peridyn. Nonlocal Model.}, 5 (2023), 392--459.

\bibitem{sunzhizhong1}Z. Sun, {\it Numerical methods of partial differential equations}, 2nd ed., Science Press, Beijing, 2012. 

\bibitem{ZheWanJMAA} H. Wang, X. Zheng, Wellposedness and regularity of the variable-order time-fractional diffusion equations, {\it J. Math. Anal. Appl.}, 475 (2019), 1778--1802.

\bibitem{twogird3}W. Wang, Long-time behavior of the two-grid finite element method for fully discrete semilinear evolution equations with positive memory, {\it J. Comput. Appl. Math.}, 250 (2013), 161--174.

\bibitem{wangyuanming}Y. Wang, Y. Zhang, A Crank-Nicolson-type compact difference method with the uniform time step for a class of weakly singular parabolic integro-differential equations, {\it Appl. Numer. Math.}, 172 (2022), 566--590.

\bibitem{Xujinchao1} J. Xu, A novel two-grid method for semilinear elliptic equations, {\it SIAM J. Sci. Comput.}, 15 (1994), 231--237.

\bibitem{Xujinchao2} J. Xu, Two-grid discretization techniques for linear and nonlinear PDEs, {\it SIAM J. Numer. Anal.}, 33 (1996), 1759--1777.



\bibitem{wulibeijing2} G. Zaslavsky, Chaos, fractional kinetics, and anomalous transport, {\it Phys. Rep.,} 371 (2002), 461--580.



\bibitem{Zayernouri} M. Zayernouri, G. Karniadakis, Fractional spectral collocation methods for linear and nonlinear variable order FPDEs, {\it J. Comput. Phys.}, 293 (2015), 312--338.

\bibitem{ZenZhaKar} F. Zeng, Z. Zhang, G. Karniadakis, A generalized spectral collocation method with tunable accuracy for variable-order fractional differential equations, {\it SIAM J. Sci. Comput.}, 37 (2015), A2710--A2732.

\bibitem{ADIarxiv} H. Zhang, K. Li, W. Qiu, Two ADI compact difference methods for variable exponent diffusion wave equations, arXiv:2509.21316v1.

\bibitem{ZheWanSINUM} X. Zheng, H. Wang, An error estimate of a numerical approximation to a hidden-memory variable-order space-time fractional diffusion equation, {\it SIAM J. Numer. Anal.}, 58 (2020), 2492--2514.

\bibitem{ZheWanIMA} X. Zheng, H. Wang, Optimal-order error estimates of finite element approximations to variable-order time-fractional diffusion equations without regularity assumptions of the true solutions, {\it IMA J. Numer. Anal.}, 41 (2021), 1522--1545.

\bibitem{zheWanCNSNS} X. Zheng, H. Wang, Analysis and discretization of a variable-order fractional wave equation, {\it Commun. Nonlinear Sci. Numer. Simul.}, 104 (2022), 106047.

\bibitem{Zhe} X. Zheng, Two methods addressing variable-exponent fractional initial and boundary value problems and Abel integral equation, {\it CSIAM T. Appl. Math.}, 6 (2025), 666--710.

\bibitem{ZhuLiu} P.~Zhuang, F.~Liu, V.~Anh, I.~Turner, Numerical methods for the variable-order fractional advection-diffusion equation with a nonlinear source term, {\it SIAM J. Numer. Anal.}, 47 (2009), 1760--1781.

\end{thebibliography}
\end{document}